\begin{document}

\renewcommand{\thefootnote}{\fnsymbol{footnote}}

\newtheorem{theorem}{Theorem}[section]
\newtheorem{corollary}[theorem]{Corollary}
\newtheorem{definition}[theorem]{Definition}
\newtheorem{conjecture}[theorem]{Conjecture}
\newtheorem{question}[theorem]{Question}
\newtheorem{lemma}[theorem]{Lemma}
\newtheorem{proposition}[theorem]{Proposition}
\newtheorem{example}[theorem]{Example}
\newenvironment{proof}{\noindent {\bf
Proof.}}{\rule{3mm}{3mm}\par\medskip}
\newcommand{\remark}{\medskip\par\noindent {\bf Remark.~~}}
\newcommand{\pp}{{\it p.}}
\newcommand{\de}{\em}

\newcommand{\JEC}{{\it Europ. J. Combinatorics},  }
\newcommand{\JCTB}{{\it J. Combin. Theory Ser. B.}, }
\newcommand{\JCT}{{\it J. Combin. Theory}, }
\newcommand{\JGT}{{\it J. Graph Theory}, }
\newcommand{\ComHung}{{\it Combinatorica}, }
\newcommand{\DM}{{\it Discrete Math.}, }
\newcommand{\ARS}{{\it Ars Combin.}, }
\newcommand{\SIAMDM}{{\it SIAM J. Discrete Math.}, }
\newcommand{\SIAMADM}{{\it SIAM J. Algebraic Discrete Methods}, }
\newcommand{\SIAMC}{{\it SIAM J. Comput.}, }
\newcommand{\ConAMS}{{\it Contemp. Math. AMS}, }
\newcommand{\TransAMS}{{\it Trans. Amer. Math. Soc.}, }
\newcommand{\AnDM}{{\it Ann. Discrete Math.}, }
\newcommand{\NBS}{{\it J. Res. Nat. Bur. Standards} {\rm B}, }
\newcommand{\ConNum}{{\it Congr. Numer.}, }
\newcommand{\CJM}{{\it Canad. J. Math.}, }
\newcommand{\JLMS}{{\it J. London Math. Soc.}, }
\newcommand{\PLMS}{{\it Proc. London Math. Soc.}, }
\newcommand{\PAMS}{{\it Proc. Amer. Math. Soc.}, }
\newcommand{\JCMCC}{{\it J. Combin. Math. Combin. Comput.}, }
\newcommand{\GC}{{\it Graphs Combin.}, }
\newcommand{\LAA}{{\it Linear Algeb. Appli.}, }

\title{ \bf The inertia of weighted unicyclic graphs
\thanks{
This work was supported by the Natural Science Foundation of China
(Nos. 11101245, 61202362, 11271256, 11271208), China Postdoctoral
Science Foundation (No. 2013M530869), the Natural Science Foundation
of Shandong (Nos. BS2010SF017, ZR2011AQ005). \newline
$^\ddagger$Corresponding author.} }

\author{Guihai Yu$^{a,b}$,  Xiao-Dong Zhang$^c$$^\ddagger$, Lihua Feng$^d$  \\
{\small a. School of Mathematics, Shandong Institute of Business and Technology}\\
{\small Yantai, Shandong, China, 264005.} \\
{\small b. Center for Combinatorics, Nankai University}\\
{\small Tianjin, China, 300071.} \\
{\small c. Department of Mathematics and MOE-LSC, Shanghai Jiao Tong University}\\
{\small Shanghai, China, 200240.} \\
 {\small d. Department of Mathematics, Central South University} \\
{\small Railway Campus,  Changsha, Hunan, China, 410075 } \\
{\small e-mail: { \tt yuguihai@126.com; xiaodong@sjtu.edu.cn; fenglh@163.com }}\\
}
\date{}
\maketitle
\vspace{-0.5cm}

\begin{abstract}
Let $G_w$ be a weighted graph. The \textit{ inertia} of $G_w$ is
the triple $In(G_w)=\big(i_+(G_w),i_-(G_w), $ $ i_0(G_w)\big)$, where
$i_+(G_w),i_-(G_w),i_0(G_w)$ are the number of the positive, negative
and zero eigenvalues of the adjacency matrix $A(G_w)$ of $G_w$
including their multiplicities, respectively. $i_+(G_w)$, $i_-(G_w)$ is
called the \textit{positive, negative index of inertia}  of $G_w$,
respectively. In
this paper we present a lower bound for the positive, negative index
of weighted unicyclic graphs of order $n$ with fixed girth and
characterize all weighted unicyclic graphs attaining this lower
bound. Moreover, we characterize the weighted unicyclic graphs of
order $n$ with two positive, two negative and at least $n-6$ zero eigenvalues, respectively.
\end{abstract}

\noindent
{{\bf Key words:} Weighted unicyclic graphs; Adjacency matrix; Inertia. } \\
{{\bf AMS Classifications:} 05C50, 15A18. } \vskip 0.1cm

\section{Introduction}
\vskip 0.1cm

Let $G$ be a simple graph of order $n$ with vertex set
$V(G)=\{v_1,v_2,\cdots,v_n\}$ and edge set $E(G)$. The
\textit{adjacency matrix} $A(G)=(a_{ij})$ of graph $G$ of order $n$
is a symmetric $(0, 1)$-matrix  such that $a_{ij}=1$ if $v_i$ is
adjacent to $ v_j$ and 0 otherwise. A weighted graph $G_w$ is a pair
$(G,w)$ where $G$ is a simple graph with edge set $E(G)$, called the
{\it underlying graph} of $G_w$, and $w$ is a weight function from
$E(G)$ to the set of nonzero real numbers. The \textit{adjacency
matrix} of $G_w$ on $n$ vertices is defined as the matrix
$A(G_w)=(a_{ij})$ such that $a_{ij}=w(v_iv_j)$ if $v_i$  is adjacent
to $v_j$ and 0 otherwise. The characteristic polynomial of $G_w$ is
the characteristic polynomial of $A(G_w)$, denoted by
$$P_{G_w}(\lambda)=det(\lambda
I-A(G_w))=\lambda^n+a_1^*\lambda^{n-1}+\cdots+a_n^*.$$  The
\textit{inertia} of  $G_w$ is defined to be the triple
$In(G_w)=\big(i_+(G_w), i_-(G_w),i_0(G_w)\big),$ where
$i_+(G_w),i_-(G_w),i_0(G_w)$ are the numbers of the positive,
negative and zero eigenvalues of $A(G_w)$ including multiplicities,
respectively. $i_+(G_w)$ and $i_-(G_w)$ are called the
\textit{positive, negative index of inertia} (abbreviated
\textit{positive, negative index}) of $G_w$, respectively. The
number $i_0(G_w)$ is called the \textit{nullity} of $G_w$. The rank
of $n$-vertex graph $G_w$, denoted by $r(G_w)$, is defined as the
rank of $A(G_w)$. Obviously, $r(G_w)=i_+(G_w)+i_-(G_w)=n-i_0(G_w)$.

 A
graph $G_w$ is called {\it acyclic} (resp. {\it unicyclic,
bipartite}) if its underlying graph $G$ is {\it acyclic} (resp. {\it
unicyclic, bipartite}).
 An \textit{induced subgraph} of $G_w$ is an induced subgraph
of $G$ with the same weights.  For a subgraph $H_w$ of $G_w$, let
$G_w-H_w$ be the subgraph obtained from $G_w$ by deleting all
vertices of $H_w$ and all incident edges. For $V^{\prime}\subseteq
V(G_w)$, $G_w-V^{\prime}$ is the subgraph obtained from $G_w$ by
deleting all vertices in $V^{\prime}$ and all incident edges. A
vertex of a graph $G_w$ is called \textit{pendant} if it has degree
one, and is called \textit{quasi-pendant} if it is adjacent to a
pendant vertex. For a weighted graph $G_w$ on at least two vertices,
a vertex $v\in V(G_w)$ is called {\it unsaturated} in $G_w$ if there
exists a maximum matching $M$ of $G$ in which no edge is incident
with $v$; otherwise, $v$ is called {\it saturated} in $G_w$.

A simple  graph may be regarded as  a weighted graph  in which
 the weight of each edge is $+1$.
A signed graph may  be regarded as a weighted graph in which
 the weight of each edge is $+1$ or $-1$. Moreover, the sign of a signed
cycle, denoted by $sgn(C)$, is defined as the sign of the product of
all edge weights $+1$ or $-1$ on $C$. The signed cycle $C$ is said
to be {\it positive (or negative)} if $sgn(C)=+$ (or $sgn(C)=-$). A
signed graph is said to be {\it balanced} if all its cycles are
positive, otherwise it is called {\it unbalanced}.

%


The study of eigenvalues of weighted graph has attracted much
attention. Several results about the (Laplacian) spectral radius of
weighted graphs were derived in \cite{berman, das1, das2, sorgun1,
sorgun2}. The inertia of unweighted graphs has attracted some
attention. Gregory et al. \cite{gregory1} studied the subadditivity
of the positive, negative indices of inertia and developed certain
properties of Hermitian rank which were used to characterize the
biclique decomposition number. Gregory et al. \cite{gregory}
investigated the inertia of a partial join of two graphs and
established a few relations between inertia and biclique
decompositions of partial joins of graphs. Daugherty \cite{sean}
characterized the inertia of unicyclic graphs in terms of matching
number and obtained a linear-time algorithm for computing it. Yu et
al. \cite{yu feng wang} investigated the minimal positive index of
inertia among all unweighted bicyclic graphs of order $n$ with
pendant vertices, and characterized the bicyclic graphs with
positive index 1 or 2. Fan et al. \cite{fan1} introduced the nullity
of signed graphs and characterized the unicyclic signed graphs of
order $n$ with nullity $n-2$, $n-3$, $n-4$, $n-5$ respectively. Fan
et al. \cite{fan} characterized the signed graphs of order $n$ with
nullity $n-2$, $n-3$, respectively, and determined the unbalanced
bicyclic signed graphs of order $n$ with nullity $n-3$ or $n-4$ and
bicyclic signed graphs of order $n$ with nullity $n-5$. Sciriha
\cite{sciriha} (see also \cite{chengbo}) characterized the
unweighted graphs of order $n$ with nullity $n-2$ or $n-3$,
respectively. Cheng et al. \cite{cheng1, cheng2} determined the
unweighted graphs of order $n$ with nullity $n-4$ or $n-5$. Guo et
al. \cite{guo} studied some relations between the matching number
and the nullity. The nullity of unweighted graphs has been studied
well in the literature, see \cite{Borovicanin} for a survey.
However, a characterization of unweighted graphs of order $n$ with
nullity at most $n-6$  is still an open problem. There is also a
large body of knowledge related to the inertia of unweighted graphs
due to its many applications in chemistry (see \cite{collatz,
fowler, gutman, longuet} for details). Motivated by the above
description, we shall characterize the weighted unicyclic graphs of
order $n$ with nullity at least $n-6$.

 This paper is organized as follows. In Section 2, some preliminaries are introduced. In Section 3,
 we  present a lower bound for the positive, negative index of weighted unicyclic graphs of order $n$ with girth $k$ $(3\leq k\leq n-2)$ and characterize all
weighted unicyclic graphs attaining this lower bound. Moreover, we
characterize the weighted unicyclic graphs of order $n$ with two
positive (negative) eigenvalues and the weighted unicyclic graphs of
order $n$ with rank $4$, respectively. In Section 4, we determine
the weighted unicyclic graphs with rank 6. In Section 5, we
characterize the weighted unicyclic graphs of order $n$ with rank
$2$, $3$, $5$, respectively.
\section{Preliminaries}

\begin{definition} Let $M$ be a Hermitian matrix. The three types of
elementary congruence matrix operations (ECMOs) of $M$ are defined
as follows:

\begin{enumerate}[(1)]
\item interchanging $i$-th and $j$-th rows of $M$, while interchanging $i$-th
and $j$-th columns of $M$;

\item multiplying $i$-th row of $M$ by non-zero number $k$, while
multiplying $i$-th of column of $M$ by k;

\item adding $i$-th row of $M$ multiplied by a non-zero number $k$ to the $j$-th
row, while adding $i$-th column of $M$ multiplied by $k$ to the
$j$-th column.
\end{enumerate}
\end{definition}

\begin{lemma}\cite{horn} (Sylvester's law of inertia)\label{sylvester
law} Let $M$ be an $n\times n$ real symmetric matrix  and $P$ be an
$n\times n$ nonsingular matrix. Then
$$i_+(PMP^{T})=i_+(M);$$
$$i_-(PMP^{T})=i_-(M).$$
\end{lemma}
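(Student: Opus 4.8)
The plan is to deduce the statement from the variational (Courant--Fischer type) description of the positive index, which makes congruence invariance transparent; congruence is exactly the transformation $M\mapsto PMP^{T}$ with $P$ nonsingular.

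First I would establish the characterization
$$i_+(M)=\max\bigl\{\dim V : V\subseteq\mathbb{R}^{n}\text{ a subspace with }x^{T}Mx>0\text{ for all }x\in V\setminus\{0\}\bigr\}.$$
To prove this, use the spectral theorem to write $M=Q\Lambda Q^{T}$ with $Q$ orthogonal and $\Lambda=\mathrm{diag}(\lambda_{1},\dots,\lambda_{n})$. The span $V_{+}$ of the columns of $Q$ belonging to the positive $\lambda_{i}$ has dimension $i_+(M)$ and satisfies $x^{T}Mx>0$ on $V_{+}\setminus\{0\}$, so the right-hand side is at least $i_+(M)$. Conversely, if $V_{\le 0}$ is the span of the columns of $Q$ belonging to the non-positive $\lambda_{i}$, then $\dim V_{\le 0}=n-i_+(M)$ and $x^{T}Mx\le 0$ on $V_{\le 0}$; so a subspace $V$ of dimension exceeding $i_+(M)$ would satisfy $\dim V+\dim V_{\le 0}>n$, forcing $V\cap V_{\le 0}\neq\{0\}$ and producing a nonzero $x$ with $x^{T}Mx\le 0$. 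This gives the displayed identity, and the analogous one for $i_-$ (with ``$<0$'') follows by applying it to $-M$, using $i_-(X)=i_+(-X)$.

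Next I would exploit nonsingularity of $P$. Let $V$ attain the maximum for $M$ and set $W=(P^{T})^{-1}V=\{\,y : P^{T}y\in V\,\}$. Since $P^{T}$ is invertible, $\dim W=\dim V=i_+(M)$, and for nonzero $y\in W$ the vector $x:=P^{T}y$ lies in $V\setminus\{0\}$, so
$$y^{T}\bigl(PMP^{T}\bigr)y=(P^{T}y)^{T}M(P^{T}y)=x^{T}Mx>0.$$
Hence $W$ is admissible for $PMP^{T}$ and $i_+(PMP^{T})\ge i_+(M)$. Applying the same reasoning to $N:=PMP^{T}$ with the nonsingular matrix $P^{-1}$, and using $P^{-1}N(P^{-1})^{T}=M$, gives $i_+(M)\ge i_+(PMP^{T})$, hence equality. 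Finally, replacing $M$ by $-M$ and using $-PMP^{T}=P(-M)P^{T}$ with $i_-(X)=i_+(-X)$ yields $i_-(PMP^{T})=i_+(-M)=i_-(M)$.

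There is no genuine obstacle; the only points needing care are the bookkeeping of transposes versus inverses (one must use $(P^{T})^{-1}V$, not $PV$) and the elementary dimension count $\dim V+\dim V_{\le 0}>n\Rightarrow V\cap V_{\le 0}\neq\{0\}$ underlying the variational formula. As an alternative I would note that one can instead reduce $M$ by the elementary congruence matrix operations introduced above to a diagonal matrix with entries in $\{1,-1,0\}$ — each ECMO being multiplication on the left and right by an elementary matrix and its transpose — and the variational characterization is precisely what shows that the resulting numbers of $+1$'s and $-1$'s do not depend on the chosen sequence of operations.
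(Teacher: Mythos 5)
Your proof is correct and complete. Note, however, that the paper offers no proof of this lemma at all: it is quoted from Horn and Johnson's \emph{Matrix Analysis} as a known result, so there is no internal argument to compare against. What you have written is the standard textbook proof via the variational characterization $i_+(M)=\max\{\dim V: x^{T}Mx>0 \text{ on } V\setminus\{0\}\}$, and every step checks out: the spectral-theorem construction of $V_{+}$ and $V_{\le 0}$, the dimension count $\dim V+\dim V_{\le 0}>n\Rightarrow V\cap V_{\le 0}\neq\{0\}$, the correct use of $W=(P^{T})^{-1}V$ rather than $PV$, the symmetric application with $P^{-1}$ to get the reverse inequality, and the reduction of the $i_-$ statement to the $i_+$ statement via $i_-(X)=i_+(-X)$. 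Your closing remark about reducing $M$ to a diagonal $\{1,-1,0\}$ matrix by the ECMOs is also apt, since it connects the lemma to the way the paper actually uses it (the observation immediately after the lemma that ECMOs preserve inertia). In short: a valid, self-contained proof of a result the paper treats as a black box.
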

By Sylvester's law of inertia, ECMOs do not change the inertia of a
Hermitian matrix.

\medskip

Moreover, the  following result is well known.
\begin{lemma}\label{hermitian matrix}
Let $M$ be an $n\times n$ real symmetric matrix and $N$ be the real
matrix obtained by bordering $M$ as follows:
$$N=\left(\begin{array}{cc}M&y\\y^T&a\end{array}\right),$$ where $y$
is a real column vector and $a$ is a real number. Then
\begin{eqnarray*}
i_+(N)-1&\leq& i_+(M)\leq i_+(N),\\
i_-(N)-1&\leq&i_-(M)\leq i_-(N).
\end{eqnarray*}
\end{lemma}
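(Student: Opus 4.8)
\medskip\noindent\textbf{Proof idea.} The plan is to use Sylvester's law of inertia (Lemma~\ref{sylvester law}) to replace $N$ by a congruent matrix in block-diagonal form whose inertia is transparent, splitting the argument according to whether the bordering vector $y$ lies in the column space of $M$.

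First I would treat the case $y\in\mathrm{col}(M)$, which is the easy one: writing $y=Mx$ for a real column vector $x$ and conjugating by
$$P=\begin{pmatrix}I_{n}&0\\-x^{T}&1\end{pmatrix},$$
one checks, using $M^{T}=M$ and $Mx=y$, that $PNP^{T}=\left(\begin{smallmatrix}M&0\\0&\,a-x^{T}y\,\end{smallmatrix}\right)$, so $N$ is congruent to $M\oplus(a-x^{T}Mx)$. Then $i_{+}(N)=i_{+}(M)+\varepsilon_{+}$ and $i_{-}(N)=i_{-}(M)+\varepsilon_{-}$ with $\varepsilon_{+},\varepsilon_{-}\in\{0,1\}$ and $\varepsilon_{+}+\varepsilon_{-}\le1$, determined by the sign of the scalar $a-x^{T}Mx$, and both displayed pairs of inequalities follow at once.

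For the case $y\notin\mathrm{col}(M)$ I would first diagonalise $M$ by ECMOs (or by the spectral theorem) into $D=\mathrm{diag}(d_{1},\dots,d_{r},0,\dots,0)$ with all $d_{i}\neq0$ and $r$ the rank of $M$; correspondingly $N$ becomes congruent to $\widetilde{N}=\left(\begin{smallmatrix}D&z\\z^{T}&a\end{smallmatrix}\right)$ for a transformed border vector $z$ that, because $y\notin\mathrm{col}(M)$, is not supported on its first $r$ coordinates. After a simultaneous row/column permutation I may assume $z_{r+1}\neq0$. I would then eliminate $z_{1},\dots,z_{r}$ using the nonzero diagonal pivots $d_{1},\dots,d_{r}$, and afterwards eliminate $z_{r+2},\dots,z_{n}$ using the row and column indexed $r+1$, reducing $\widetilde{N}$ to
$$\mathrm{diag}(d_{1},\dots,d_{r})\ \oplus\ \begin{pmatrix}0&z_{r+1}\\z_{r+1}&a'\end{pmatrix}\ \oplus\ 0_{\,n-r-1}$$
for some scalar $a'$, where $0_{\,n-r-1}$ is the zero matrix of that size. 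Since the $2\times2$ block has determinant $-z_{r+1}^{2}<0$, its inertia is $(1,1,0)$, whence $i_{+}(N)=i_{+}(M)+1$ and $i_{-}(N)=i_{-}(M)+1$, and again all the stated inequalities hold.

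The main obstacle is the bookkeeping of the elementary congruences in the second case: one must check that eliminating the last row and column with the off-diagonal pivot $z_{r+1}$ leaves the block $\mathrm{diag}(d_{1},\dots,d_{r})$ unchanged — which it does, since after the first round of eliminations the $(r+1)$st row and column meet that block only in zero positions. If one prefers to avoid the case split altogether, the lemma also drops out of the variational description of inertia (itself a consequence of Sylvester's law): $i_{+}(M)$ is the largest dimension of a subspace of $\mathbb{R}^{n}$ on which $v\mapsto v^{T}Mv$ is positive definite, and $\{v\in\mathbb{R}^{n+1}:v_{n+1}=0\}$ is a hyperplane on which the quadratic form of $N$ restricts to that of $M$; intersecting a maximal positive- (resp. negative-) definite subspace for $N$ with this hyperplane gives $i_{\pm}(M)\ge i_{\pm}(N)-1$, while $i_{\pm}(M)\le i_{\pm}(N)$ is immediate.
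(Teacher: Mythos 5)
Your proposal is correct. Note, however, that the paper does not actually prove this lemma: it states it as ``well known'' immediately after quoting Sylvester's law of inertia, so there is no in-paper argument to compare against. Both of your routes are sound. The congruence argument is airtight: in the case $y\in\mathrm{col}(M)$ the matrix $P=\left(\begin{smallmatrix}I&0\\-x^{T}&1\end{smallmatrix}\right)$ does reduce $N$ to $M\oplus(a-x^{T}Mx)$, and in the case $y\notin\mathrm{col}(M)$ your identification of $\mathrm{col}(D)$ with the span of the first $r$ coordinates guarantees $z_{r+1}\neq 0$ after permutation, the two rounds of eliminations commute harmlessly with the block $\mathrm{diag}(d_{1},\dots,d_{r})$ exactly as you note, and the $2\times 2$ block with negative determinant forces $i_{\pm}(N)=i_{\pm}(M)+1$. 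This argument has the virtue of using only the tools the paper has already set up (the ECMOs of Definition 2.1 and Lemma 2.2), so it would slot naturally into the text. Your second route via the variational characterization of $i_{\pm}$ (intersecting a maximal definite subspace with the coordinate hyperplane) is the slicker and more standard proof --- it is essentially Cauchy interlacing in disguise --- and avoids the case split entirely; either version suffices.
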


The following result is an immediate consequence of Lemma
\ref{hermitian matrix}.
\begin{lemma}\label{induced subgraph}
Let $H_w$ be an induced subgraph of $G_w$. Then $i_+(H_w)\leq
i_+(G_w)$, $i_-(H_w)\leq i_-(G_w)$.
\end{lemma}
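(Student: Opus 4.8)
The plan is to derive this directly from Lemma \ref{hermitian matrix} by peeling off one vertex at a time. First I would observe that since $H_w$ is an induced subgraph of $G_w$, the matrix $A(H_w)$ is a principal submatrix of $A(G_w)$: it is obtained from $A(G_w)$ by deleting the rows and columns indexed by the vertices in $V(G_w)\setminus V(H_w)$. Write $t=|V(G_w)|-|V(H_w)|$ and choose an ordering $G_w=G_0\supseteq G_1\supseteq\cdots\supseteq G_t=H_w$ in which each $G_{i+1}$ is obtained from $G_i$ by deleting a single vertex together with its incident edges. It then suffices to prove $i_+(G_{i+1})\le i_+(G_i)$ and $i_-(G_{i+1})\le i_-(G_i)$ for each $i$, and chain these $t$ inequalities.

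For the one-vertex step, fix $i$ and let $v$ be the vertex deleted from $G_i$ to form $G_{i+1}$. By permuting rows and columns of $A(G_i)$ so that the row and column indexed by $v$ are moved to the last position — this is a sequence of ECMOs of type (1), hence by Lemma \ref{sylvester law} it preserves both $i_+$ and $i_-$ — we may assume
$$A(G_i)=\left(\begin{array}{cc}A(G_{i+1})&y\\y^T&a\end{array}\right),$$
where $y$ is the column of weights of edges incident with $v$ and $a=0$. Applying Lemma \ref{hermitian matrix} with $M=A(G_{i+1})$ and $N=A(G_i)$ gives immediately $i_+(G_{i+1})\le i_+(G_i)$ and $i_-(G_{i+1})\le i_-(G_i)$.

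Combining the steps yields $i_+(H_w)=i_+(G_t)\le i_+(G_{t-1})\le\cdots\le i_+(G_0)=i_+(G_w)$, and the same telescoping argument for the negative index gives $i_-(H_w)\le i_-(G_w)$, completing the proof. There is no serious obstacle here; the only point requiring a little care is the bookkeeping in the reduction — namely that deleting an arbitrary set of vertices can be realized as a succession of single-vertex deletions and that each such deletion, after an inertia-preserving reordering, is exactly the bordering situation of Lemma \ref{hermitian matrix}.
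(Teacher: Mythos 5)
Your proof is correct and follows exactly the route the paper intends: the paper states this lemma as an immediate consequence of Lemma \ref{hermitian matrix}, and your argument simply spells out the vertex-by-vertex bordering induction that justifies that claim. Nothing further is needed.
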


It is known that the following result hold.

\begin{lemma}\label{bipartite graph}
Let $G_w$ be a weighted bipartite graph. Then $i_+(G_w)=i_-(G_w)$.
\end{lemma}

The following Lemma is essential belong to Theorem 1.1(b) in \cite{gregory}, Lemma 2.3 in \cite{ma haicheng}, or Lemma 2.9 in \cite{yu feng wang}.

\begin{lemma}\label{deleting pendent vertex}
Let $G_w$ be a weighted graph containing a pendant vertex $v$ with
the unique neighbor $u$. Then $i_+(G_w)=i_+(G_w-u-v)+1$,
$i_-(G_w)=i_-(G_w-u-v)+1$ and $i_0(G_w)=i_0(G_w-u-v)$.
\end{lemma}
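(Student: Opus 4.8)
The plan is to exhibit an explicit congruence transformation of $A(G_w)$ into a block-diagonal matrix and then read off the inertia from the blocks using Sylvester's law of inertia (Lemma \ref{sylvester law}). First I would order the vertices of $G_w$ so that the pendant vertex $v$ comes last and its unique neighbour $u$ comes second to last. Writing $H_w=G_w-u-v$ and $w_0=w(uv)\neq 0$, the adjacency matrix then has the block form
$$A(G_w)=\begin{pmatrix} A(H_w) & \alpha & \mathbf{0}\\ \alpha^{T} & 0 & w_0\\ \mathbf{0}^{T} & w_0 & 0\end{pmatrix},$$
where $\alpha$ is the column vector recording the weights of the edges joining $u$ to the vertices of $H_w$, and the bottom row/column corresponds to $v$, which — being pendant with neighbour $u$ — contributes only the single entry $w_0$.

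Next I would use the last row and column (the vertex $v$) as a pivot to annihilate $\alpha$: for each vertex $x$ of $H_w$, add $-\alpha_x/w_0$ times the $v$-row to the $x$-row and, simultaneously, $-\alpha_x/w_0$ times the $v$-column to the $x$-column. Each such move is a type-(3) ECMO, hence realized by a congruence, and by Lemma \ref{sylvester law} it preserves the inertia. The key observation — essentially the only thing that needs checking — is that since the $v$-row is zero in every coordinate except the $u$-coordinate, these operations change nothing inside the $A(H_w)$ block, nothing in the $v$-row/column, and nothing on the diagonal; their sole effect is to replace $\alpha$ and $\alpha^{T}$ by zero vectors. After performing all of them, $A(G_w)$ has been transformed by a congruence into
$$A(H_w)\ \oplus\ \begin{pmatrix} 0 & w_0\\ w_0 & 0\end{pmatrix}.$$

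Finally, the inertia of a direct sum is the sum of the inertias, and $\left(\begin{smallmatrix} 0 & w_0\\ w_0 & 0\end{smallmatrix}\right)$ has eigenvalues $\pm w_0$ and hence inertia $(1,1,0)$. This yields $i_+(G_w)=i_+(H_w)+1$ and $i_-(G_w)=i_-(H_w)+1$, and the statement about $i_0$ follows from $i_0=n-i_+-i_-$ together with $|V(H_w)|=n-2$ (the degenerate case $n=2$, where $H_w$ is the empty graph, is covered as well). I do not anticipate a genuine obstacle here, since the argument is elementary linear algebra; the only point requiring care is the bookkeeping that the pendant-vertex pivot leaves the induced block $A(H_w)$ intact. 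As an alternative one could bypass the explicit congruence and apply the bordering inequalities of Lemma \ref{hermitian matrix} twice — first deleting $v$, then $u$ — but that route would need an extra argument to promote the inequalities to equalities and appears less transparent.
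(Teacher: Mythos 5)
Your argument is correct. The pivot step is sound: since the $v$-row is zero except in the $u$-coordinate and $w_0=w(uv)\neq 0$ by the paper's standing assumption that weights are nonzero, the type-(3) ECMOs you describe annihilate $\alpha$ and $\alpha^{T}$ without disturbing the $A(H_w)$ block, the $(u,u)$ entry, or the $2\times 2$ block for $\{u,v\}$, so the congruence to $A(H_w)\oplus\left(\begin{smallmatrix}0&w_0\\ w_0&0\end{smallmatrix}\right)$ and the conclusion via Lemma \ref{sylvester law} are all in order. Note, however, that the paper does not prove this lemma at all: it simply attributes it to the literature (Theorem 1.1(b) of \cite{gregory}, Lemma 2.3 of \cite{ma haicheng}, Lemma 2.9 of \cite{yu feng wang}). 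So your contribution is a self-contained proof of a cited fact, and the technique you use --- eliminating off-diagonal entries by an ECMO pivot and invoking Sylvester's law --- is exactly the style the authors themselves employ in their proof of Lemma \ref{pendant twin}, so it fits the paper's toolkit precisely. Your closing remark is also apt: the alternative route through Lemma \ref{hermitian matrix} alone only yields inequalities, so the explicit congruence is the right choice.
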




\begin{example}
Let $P_w$ be a weighted path of order $n=2s+t$. Then

$$\Big(i_+(P_w),i_-(P_w),i_0(P_w)\Big)=\left\{
\begin{array}{l l}
\Big(\frac{n}{2},\,\frac{n}{2},\, 0\Big)&\quad \mbox{if $t=0$,}\\[3mm]
\Big(\frac{n-1}{2},\,\frac{n-1}{2},1\Big) &\quad \mbox{if $t=1$.}
\end{array}\right.$$
\end{example}

Let $u,v$ be two pendant vertices of a weighted graph $G_w$. $u,v$
are called \emph{pendant twins} if they have the same neighbor in
$G_w$.
\begin{lemma}\label{pendant twin}
Let $u,v$ be pendant twins of a weighted graph $G_w$. Then
$i_+(G_w)=i_+(G_w-u)=i_+(G_w-v)$, $i_-(G_w)=i_-(G_w-u)=i_-(G_w-v)$.
\end{lemma}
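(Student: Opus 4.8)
The statement to prove is Lemma~\ref{pendant twin}: if $u,v$ are pendant twins of $G_w$ (both pendant, sharing the common neighbor $w$), then $i_\pm(G_w)=i_\pm(G_w-u)=i_\pm(G_w-v)$.

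My plan is to reduce this to a rank/congruence computation localized at the three vertices $u,v,w$. Label the common neighbor $w$ and suppose $w(uw)=\alpha$, $w(vw)=\beta$, both nonzero. In $A(G_w)$ the rows/columns indexed by $u$ and $v$ have a single nonzero entry each, namely in the $w$-column (respectively $w$-row). The idea is to use an ECMO of type~(3): add $(-\beta/\alpha)$ times the $u$-th row to the $v$-th row, and simultaneously $(-\beta/\alpha)$ times the $u$-th column to the $v$-th column. Since the only nonzero entry in row $u$ is the $(u,w)$ entry $\alpha$, this operation zeroes out the $(v,w)$ entry; one checks it does not disturb any other entry because row/column $u$ has no other nonzero entries and row/column $v$ had no entries outside position $w$ either. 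After this operation the vertex $v$ becomes isolated (its row and column are entirely zero). By Lemma~\ref{sylvester law}, ECMOs preserve $i_+$ and $i_-$, so $i_\pm(G_w)=i_\pm(A')$ where $A'$ is the resulting matrix; and $A'$ is the direct sum of $A(G_w-v)$ with a $1\times 1$ zero block. An isolated vertex contributes $0$ to both $i_+$ and $i_-$, hence $i_\pm(G_w)=i_\pm(G_w-v)$. By the symmetry of the roles of $u$ and $v$ (swap $\alpha\leftrightarrow\beta$, or simply relabel), the same argument gives $i_\pm(G_w)=i_\pm(G_w-u)$, completing the proof.

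An alternative, perhaps cleaner, route is to invoke Lemma~\ref{deleting pendent vertex} twice and Lemma~\ref{induced subgraph} once. Applying Lemma~\ref{deleting pendent vertex} to the pendant vertex $v$ with neighbor $w$ gives $i_\pm(G_w)=i_\pm(G_w-v-w)+1$. Now in $G_w-w$ the vertex $u$ has become isolated, so $i_\pm(G_w-v) = i_\pm(G_w-v-w) + i_\pm(\{u\}) $ up to the bookkeeping for $u$; more precisely $G_w-v$ contains $u$ as a pendant vertex with neighbor $w$, so Lemma~\ref{deleting pendent vertex} again yields $i_\pm(G_w-v)=i_\pm(G_w-v-w-u)+1 = i_\pm((G_w-v-w)-u)+1$. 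Comparing, it remains to see that deleting the now-isolated-in-$G_w-v-w$? — no: after removing $v$ and $w$, the vertex $u$ is isolated in $G_w-v-w$, so $i_\pm(G_w-v-w-u)=i_\pm(G_w-v-w)$. Substituting back gives $i_\pm(G_w-v)=i_\pm(G_w-v-w)+1=i_\pm(G_w)$, and symmetrically for $u$.

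The computation is entirely routine; there is no serious obstacle. The only point demanding a little care is verifying that the type-(3) ECMO (in the first approach) genuinely clears the $v$-row and $v$-column without creating new nonzero entries — this hinges precisely on the hypothesis that \emph{both} $u$ and $v$ are pendant with the \emph{same} neighbor, so that rows $u$ and $v$ are scalar multiples of each other, each supported only on the $w$-coordinate. In the second approach, the analogous subtlety is just tracking that removing $w$ isolates $u$ (because $w$ was $u$'s unique neighbor) so that the subsequent deletion of $u$ costs nothing in inertia. I would present the first approach as the main proof since it mirrors the ECMO language already set up in Definition~\ref{sylvester law}'s vicinity.
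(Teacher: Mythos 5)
Your first argument is essentially identical to the paper's own proof: the paper also applies the third ECMO to the rows and columns indexed by the two pendant twins (using that each is supported only on the common-neighbor coordinate) to isolate one twin, then deletes the resulting zero row and column without affecting $i_+$ or $i_-$ by Sylvester's law. Both your ECMO computation and your alternative route via Lemma~\ref{deleting pendent vertex} are correct; the latter is a valid (and slightly slicker) variant, but the main line of reasoning is the same as in the paper.
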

\begin{proof}
Let $u^\prime$ be the common neighbor of $u,v$ with
$w_1=w(uu^\prime)$, $w_2=w(vu^\prime)$. Then the adjacency matrix of
$G_w$ can be expressed as

$$A(G_w) = \left(
\begin{array}{c:c:c}
\begin{matrix}
0 & 0 \\
0 & 0 \\
\end{matrix} &\begin{matrix}
w_1  \\
w_2  \\
\end{matrix}& \text{\Large{\:0}} \\
\hdashline %
\begin{matrix} w_1& w_2 \\
\end{matrix}&0&\alpha\\
\hdashline %
\text{\rule{0pt}{17pt}\Large{$0^t$}}&\alpha^t & B
\end{array}
\right),$$ where $B$ is the adjacency matrix of $G_w-u-v-u^\prime$
and the first three rows and columns are labeled by $u$, $v$ and
$u^\prime$. So we have
\begin{eqnarray*}
i_+(G_w)&=&i_+\left(
\begin{array}{c:c:c}
\begin{matrix}
0 & 0 \\
0 & 0 \\
\end{matrix} &\begin{matrix}
0  \\
w_2  \\
\end{matrix}& \text{\Large{\:0}} \\
\hdashline %
\begin{matrix} 0& w_2 \\
\end{matrix}&0&\alpha\\
\hdashline %
\text{\rule{0pt}{17pt}\Large{$0^t$}}&\alpha^t & B
\end{array}
\right) \quad \mbox{
\text{by the third ECMO}}\\[3mm]
&=&i_+\left(
\begin{array}{c:c:c}
0&w_2 & \text{\Large{\:0}} \\
\hdashline %
w_2&0&\alpha\\
\hdashline %
 \text{\rule{0pt}{17pt}\Large{$0^t$}}&\alpha^t & B
\end{array}
\right)\\[3mm]
&=&i_+(G_w-u).
\end{eqnarray*}
Similarly, we have $i_+(G_w)=i_+(G_w-v)$,
$i_-(G_w)=i_-(G_w-u)=i_-(G_w-v)$.
\end{proof}

A subgraph $U$ of an unweighted graph $G$ is called an {\it
elementary subgraph} if each component of $U$ is a single edge or a
cycle. Let $p(U)$, $c(U)$ be the number of components and the number
of cycles contained in an elementary subgraph $U$, respectively.
 \begin{lemma}
  \cite{cvetkovic}\label{coefficients of weighted graphs}
The coefficient of the characteristic polynomial of the weighted
graph $G_w$ can be expressed as $$a_i^*=\sum_{U\in
\mathscr{U}_i}(-1)^{p(U)}2^{c(U)}\prod_{e\in
E(U)}(w(e))^{\zeta(e,U)},$$ where $\mathscr{U}_i$ is the set of all
elementary subgraphs $U$ contained in the underlying graph $G$
having exactly $i$ vertices, $\zeta(e,U)=1$ if $e$ is contained in
some cycle of $U$ and 2 otherwise.
\end{lemma}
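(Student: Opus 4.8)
The plan is to obtain the formula directly from the Leibniz (permutation) expansion of $P_{G_w}(\lambda)=\det(\lambda I-A(G_w))$, extracting the coefficient of $\lambda^{n-i}$ and organizing the surviving terms by the combinatorial structure of the underlying permutations. Write $\det(\lambda I-A(G_w))=\sum_{\sigma\in S_n}\operatorname{sgn}(\sigma)\prod_{j=1}^n\bigl(\lambda I-A(G_w)\bigr)_{j,\sigma(j)}$. Since $G_w$ has no loops, the diagonal entries of $\lambda I-A(G_w)$ all equal $\lambda$, while the $(j,k)$ entry for $j\neq k$ equals $-a_{jk}=-w(v_jv_k)$ if $v_jv_k\in E(G)$ and $0$ otherwise. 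To pick up a $\lambda^{n-i}$ term one must take the factor $\lambda$ from exactly $n-i$ indices $j$ — necessarily the fixed points of $\sigma$ — and the factor $-a_{j,\sigma(j)}$ from the remaining $i$ indices, which forces the restriction of $\sigma$ to its support to use only edges of $G$ (otherwise some $a_{j,\sigma(j)}$ vanishes). Hence $a_i^*=\sum_\sigma\operatorname{sgn}(\sigma)\prod_{j:\,\sigma(j)\neq j}\bigl(-a_{j,\sigma(j)}\bigr)$, the sum being over permutations with exactly $i$ non-fixed points whose action on those points moves along edges of $G$.

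Next I would classify the surviving $\sigma$. Decomposing $\sigma$ restricted to its support into disjoint cycles, a nonzero term requires every cyclically consecutive pair to be an edge of $G$; thus a $2$-cycle $(a\;b)$ corresponds exactly to choosing the edge $ab$ of $G$, and a cycle of length $\ell\ge 3$ corresponds to a cycle of $G$ on those vertices, which $\sigma$ may traverse in either of two directions. Consequently each surviving $\sigma$ determines an elementary subgraph $U$ on $i$ vertices (its single-edge components being the transpositions, its cycle components being the longer cycles), and conversely each elementary subgraph $U$ with $i$ vertices arises from precisely $2^{c(U)}$ permutations — one choice per single edge and two per cycle. Regrouping the sum for $a_i^*$ according to $U$ then reduces everything to computing, for a fixed $U$, the (common) sign of those $2^{c(U)}$ permutations and the product of the accompanying factors $-a_{j,\sigma(j)}$.

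The remaining step is the sign and weight bookkeeping, and this is the only place any care is needed. A permutation acting on $i$ points as a product of $p(U)$ disjoint cycles (here $p(U)$ equals the number of single-edge components plus the number of cycle components, which is exactly the number of cycles of $\sigma$ on its support) has $\operatorname{sgn}(\sigma)=(-1)^{i-p(U)}$, the same for all $2^{c(U)}$ permutations realizing $U$. The product of off-diagonal factors splits over the components of $U$: a single edge $ab$ contributes $(-a_{ab})(-a_{ba})=w(ab)^2$ by symmetry of $A(G_w)$, while a cycle of length $\ell$ contributes $(-1)^\ell\prod_{e}w(e)$; since the cycle components of $U$ cover $i-2m$ vertices, where $m$ is the number of single edges, the product of these cycle signs is $(-1)^{i-2m}=(-1)^i$. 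Multiplying everything together, the contribution of $U$ is $2^{c(U)}(-1)^{i-p(U)}(-1)^i\prod_{e\in E(U)}w(e)^{\zeta(e,U)}=(-1)^{p(U)}2^{c(U)}\prod_{e\in E(U)}w(e)^{\zeta(e,U)}$, using $(-1)^{2i-p(U)}=(-1)^{-p(U)}=(-1)^{p(U)}$ and noting that $\zeta(e,U)=2$ precisely for the single edges and $\zeta(e,U)=1$ for edges lying on a cycle of $U$. Summing over all $U\in\mathscr{U}_i$ gives the claimed expression for $a_i^*$. I do not anticipate a real obstacle; the one subtlety is reconciling the three sources of sign — $\operatorname{sgn}(\sigma)$, the $-1$ from each off-diagonal factor, and the orientation factor $(-1)^\ell$ from each cycle — so that they collapse cleanly to the single prefactor $(-1)^{p(U)}$.
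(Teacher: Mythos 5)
Your argument is correct. Note, however, that the paper offers no proof of this lemma at all: it is quoted with a citation to Cvetkovi\'{c}--Rowlinson, so there is nothing internal to compare against. What you have written is the standard derivation of the Sachs-type coefficient theorem adapted to weighted adjacency matrices: extract the coefficient of $\lambda^{n-i}$ from the Leibniz expansion of $\det(\lambda I-A(G_w))$, observe that the surviving permutations are exactly those whose support decomposes into transpositions along edges and longer cycles along cycles of $G$, group them by the elementary subgraph $U$ they trace out (with multiplicity $2^{c(U)}$ from the two orientations of each cycle), and collapse the three sign sources $\operatorname{sgn}(\sigma)=(-1)^{i-p(U)}$, the $(-1)^i$ from the off-diagonal entries of $\lambda I-A$, and the exponents $\zeta(e,U)$ into the stated prefactor $(-1)^{p(U)}2^{c(U)}$. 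The bookkeeping checks out. The only point you pass over silently is that the two permutations realizing a given cycle component (the two traversal directions) yield the same weight product; this uses the symmetry $a_{jk}=a_{kj}$ of $A(G_w)$, which you invoke for the transposition case but which is equally needed here. That is a one-line remark, not a gap.
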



The following Lemma for un-weighted graph is  known  (for example see \cite{fiorini}).
\begin{lemma}\label{weighted tree inertia}
Let $T_w$ be a weighted tree of order $n$ with matching number
$m(T_w)$. Then
$$i_+(T_w)= i_-(T_w)=m(T_w), \ \ i_0(T_w)=n-2m(T_w).$$
\end{lemma}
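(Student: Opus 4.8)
The plan is to argue by induction on the order $n$, using Lemma \ref{deleting pendent vertex} (deletion of a pendant edge) together with the fact that a tree is bipartite, so that Lemma \ref{bipartite graph} applies. For the base case $n=1$ the tree is a single vertex, $m(T_w)=0$, and the assertion reads $i_+(T_w)=i_-(T_w)=0$ and $i_0(T_w)=1$, which is clear. For the inductive step, assume the statement holds for all weighted trees of order less than $n$, and let $T_w$ have order $n\ge 2$. Then $T_w$ has a pendant vertex $v$; let $u$ be its unique neighbor. By Lemma \ref{deleting pendent vertex},
\[
i_+(T_w)=i_+(T_w-u-v)+1,\qquad i_-(T_w)=i_-(T_w-u-v)+1,\qquad i_0(T_w)=i_0(T_w-u-v).
\]

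The second ingredient is the elementary matching identity $m(T_w)=m(T_w-u-v)+1$. One inequality is immediate, since appending the edge $uv$ to any maximum matching of $T_w-u-v$ yields a matching of $T_w$. For the reverse inequality one uses that, because $v$ has degree one, some maximum matching $M$ of $T_w$ contains $uv$: if a maximum matching $M_0$ avoids $uv$, then $v$ is uncovered by $M_0$, so $u$ must be covered by $M_0$ (else $M_0\cup\{uv\}$ would be larger), say by an edge $uw$; replacing $uw$ by $uv$ gives the desired $M$. Deleting $uv$ from $M$ then produces a matching of $T_w-u-v$ of size $m(T_w)-1$, so $m(T_w-u-v)\ge m(T_w)-1$.

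Now $T_w-u-v$ is a weighted forest on $n-2$ vertices, say with tree components $T_1,\dots,T_c$, each of order strictly less than $n$. Since $i_+$, $i_-$, $i_0$ and the matching number are all additive over connected components, applying the induction hypothesis to each $T_i$ and summing gives $i_+(T_w-u-v)=i_-(T_w-u-v)=m(T_w-u-v)$ and $i_0(T_w-u-v)=(n-2)-2\,m(T_w-u-v)$. Combining this with the identities above yields $i_+(T_w)=i_-(T_w)=m(T_w-u-v)+1=m(T_w)$ and $i_0(T_w)=(n-2)-2\,m(T_w-u-v)=n-2\,m(T_w)$, completing the induction.

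There is no serious obstacle here; the only points needing care are the matching identity just discussed and the (routine) observation that deleting a pendant edge from a tree leaves a forest, so the induction must be carried over forests, i.e.\ over components. Alternatively, one can bypass the induction and read the result directly off Lemma \ref{coefficients of weighted graphs}: in a tree every elementary subgraph is a matching, so $a_{2k+1}^*=0$ and $a_{2k}^*=(-1)^k\sum_{M}\prod_{e\in M}(w(e))^2$, the sum being over all $k$-matchings $M$. All terms in $a_{2k}^*$ have the same sign and none vanishes, so $a_{2k}^*\ne 0$ exactly when $k\le m(T_w)$; hence $P_{T_w}(\lambda)=\lambda^{\,n-2m(T_w)}\,Q(\lambda)$ with $Q(0)=a_{2m(T_w)}^*\ne 0$, so $\lambda=0$ has multiplicity precisely $n-2m(T_w)$, and bipartiteness (Lemma \ref{bipartite graph}) forces $i_+(T_w)=i_-(T_w)=m(T_w)$.
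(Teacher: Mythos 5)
Your argument is correct, but your primary route is not the one the paper takes. The paper proves this lemma directly from Lemma \ref{coefficients of weighted graphs}: in a tree every elementary subgraph is a matching, so the odd coefficients of $P_{T_w}(\lambda)$ vanish and $a_{2i}^*=(-1)^i\sum_{U}\prod_{e\in E(U)}(w(e))^2$, whence $a_{2m(T_w)}^*$ is the last nonzero coefficient, $i_0(T_w)=n-2m(T_w)$, and bipartiteness (Lemma \ref{bipartite graph}) splits the rank evenly --- this is exactly the ``alternative'' you sketch in your final paragraph, so you have in effect reproduced the paper's proof as a remark. Your main argument, by contrast, is an induction on $n$ via pendant-edge deletion (Lemma \ref{deleting pendent vertex}), the matching identity $m(T_w)=m(T_w-u-v)+1$, and additivity over the components of the resulting forest; all three ingredients are justified correctly, including the exchange argument showing some maximum matching contains the pendant edge (the only unremarked edge case, $n=2$ where $T_w-u-v$ is empty, is trivially fine). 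What the induction buys is independence from the coefficient formula and from bipartiteness --- it yields $i_+=i_-=m$ directly and in fact proves Corollary \ref{weighted forest inertia} along the way, since the induction must be carried over forests anyway. What the paper's route buys is brevity and slightly more information: it exhibits the characteristic polynomial as $\lambda^{n-2m(T_w)}$ times a polynomial with nonzero constant term, and it is the same computation that the paper reuses elsewhere.
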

\begin{proof} For completeness, we present a proof.
It is natural that any elementary subgraph in $T$ consists only of
copies of $K_2$ and has an even number of vertices. By Lemma
\ref{coefficients of weighted graphs}, the coefficients of
$P_{T_w}(\lambda)$ with odd subscript are zero. So we only consider
the coefficients with even subscript.

If $i>m(T_w)$, there exists no elementary subgraph and $a^*_{2i}=0$.
Therefore we suppose $0\leq i\leq m(T_w)$ in the sequel. In view of
Lemma \ref{coefficients of weighted graphs}, we have
$a^*_{2i}=(-1)^i\sum_{U\in \mathscr{U}_{2i}}\prod_{e\in
E(U)}(w(e))^2.$ So $a_{2m(T_w)}$ is the last non-zero coefficient of
$P_{T_w}(\lambda)$. It yields that $i_0(T_w)=n-2m(T_w)$. So we have
$i_+(T_w)=i_-(T_w)=m(T_w)$ by Lemma \ref{bipartite graph}.
\end{proof}

\begin{remark}
 Lemma \ref{weighted tree inertia} shows that the inertia of
a weighted tree is independent of the weights.
\end{remark}

\begin{corollary}\label{weighted forest inertia}
Let $G_w$ be a weighted forest of order $n$ with matching number
$m(G_w)$. Then
$$i_+(G_w)= i_-(G_w)=m(G_w), \ \ i_0(G_w)=n-2m(G_w).$$
\end{corollary}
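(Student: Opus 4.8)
The plan is to reduce the statement to the tree case (Lemma \ref{weighted tree inertia}) by exploiting the block structure of the adjacency matrix of a disconnected graph. Write $G_w$ as the disjoint union of its connected components $T_1,T_2,\ldots,T_c$; since $G_w$ is a forest, each $T_i$ is a weighted tree (possibly a single vertex). Ordering the vertices of $G_w$ component by component, the adjacency matrix becomes block diagonal,
$$A(G_w)=A(T_1)\oplus A(T_2)\oplus\cdots\oplus A(T_c).$$

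Next I would use the fact that the inertia of a block-diagonal symmetric matrix is the sum of the inertias of the blocks, so that $i_+(G_w)=\sum_{i=1}^{c}i_+(T_i)$, $i_-(G_w)=\sum_{i=1}^{c}i_-(T_i)$, and $i_0(G_w)=\sum_{i=1}^{c}i_0(T_i)$. (If one prefers, this additivity can also be derived by repeatedly applying the congruence-invariance of inertia, or simply by noting that the spectrum of a direct sum is the union of the spectra.) Likewise, a maximum matching of $G_w$ is obtained by taking a maximum matching in each component, so $m(G_w)=\sum_{i=1}^{c}m(T_i)$, and of course $n=\sum_{i=1}^{c}|V(T_i)|$.

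Finally I would apply Lemma \ref{weighted tree inertia} to each tree $T_i$, giving $i_+(T_i)=i_-(T_i)=m(T_i)$ and $i_0(T_i)=|V(T_i)|-2m(T_i)$ (this also covers the degenerate case of an isolated vertex, where $m=0$ and the nullity contribution is $1$). Summing over $i=1,\ldots,c$ and substituting the identities above yields $i_+(G_w)=i_-(G_w)=m(G_w)$ and $i_0(G_w)=n-2m(G_w)$, as claimed. There is no real obstacle here; the only point that requires a line of justification is the additivity of inertia over the direct sum of blocks, which is immediate from Sylvester's law of inertia (Lemma \ref{sylvester law}) or from the definition in terms of eigenvalues.
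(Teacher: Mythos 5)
Your proposal is correct and is precisely the argument the paper leaves implicit: the corollary is stated without proof as an immediate consequence of Lemma \ref{weighted tree inertia}, and the intended route is exactly your component-wise decomposition, using the additivity of inertia over the block-diagonal adjacency matrix and of the matching number over components. Your explicit handling of isolated vertices is a sensible extra touch, but there is nothing to compare beyond that.
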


The following result is an extension of Theorem 5.2 in \cite{ma
haicheng}.
\begin{lemma}\label{difference}
Let $G_w$ be a weighted graph. Then $|i_+(G_w)-i_-(G_w)|\leq
c(G_w),$ where $c(G_w)$ is the number of all odd cycles in $G_w$.
\end{lemma}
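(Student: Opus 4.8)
The plan is to reduce the claim to the bipartite case, Lemma~\ref{bipartite graph}, by deleting one vertex from each odd cycle and then tracking how $i_+$ and $i_-$ move under single-vertex deletions via the bordering inequality of Lemma~\ref{hermitian matrix}.

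First I would pick, for every odd cycle $C$ of $G_w$, a vertex $v_C\in V(C)$, and put $S=\{v_C: C \text{ an odd cycle of } G_w\}$. Distinct odd cycles may share a chosen vertex, so $|S|\le c(G_w)$. Every odd cycle of $G_w$ contains a vertex of $S$, hence the underlying graph of the induced subgraph $G_w-S$ has no odd cycle and is therefore bipartite; consequently $i_+(G_w-S)=i_-(G_w-S)$ by Lemma~\ref{bipartite graph}.

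Next I would remove the vertices of $S$ one at a time. At each step the adjacency matrix of the current graph is obtained from that of the next graph by bordering with a single row and column (put the vertex being removed last), so Lemma~\ref{hermitian matrix} applies and gives, after all $|S|$ deletions, $i_+(G_w)-|S|\le i_+(G_w-S)\le i_+(G_w)$ and likewise $i_-(G_w)-|S|\le i_-(G_w-S)\le i_-(G_w)$. Combining these with $i_+(G_w-S)=i_-(G_w-S)$ yields $i_+(G_w)-i_-(G_w)\le\bigl(i_+(G_w-S)+|S|\bigr)-i_-(G_w-S)=|S|$ and, symmetrically, $i_-(G_w)-i_+(G_w)\le|S|$; hence $|i_+(G_w)-i_-(G_w)|\le|S|\le c(G_w)$.

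There is no deep obstacle here; the points requiring care are the orientation of the bordering inequalities (the shift $+|S|$ must land on the index one is bounding from above, while the other index is controlled with no shift) and the elementary fact that a graph with no odd cycle is bipartite, which is exactly what makes Lemma~\ref{bipartite graph} applicable to $G_w-S$. I would also note that the same argument in fact proves the sharper statement that $|i_+(G_w)-i_-(G_w)|$ is at most the minimum number of vertices meeting all odd cycles of $G_w$, which is trivially at most $c(G_w)$.
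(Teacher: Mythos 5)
Your argument is correct and is essentially the paper's own proof: the paper carries out the same reduction by induction on the number of odd cycles, deleting one vertex of an odd cycle at a time and combining the bordering inequalities of Lemma~\ref{hermitian matrix} with the bipartite base case supplied by Lemma~\ref{bipartite graph}. Your batch version, which deletes an odd-cycle transversal $S$ all at once, is just a reorganization of that induction, though it does record the marginally sharper conclusion $|i_+(G_w)-i_-(G_w)|\le |S|$.
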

\begin{proof} By Lemma \ref{bipartite graph}, it suffices to consider the non-bipartite graphs. We apply induction on the number of odd cycles in $G_w$.
Now assume that $G_w$ contains at least one odd cycle and
$|i_+(G_w-v)-i_-(G_w-v)|\leq c(G_w-v)$ holds for a vertex $v$ of
some odd cycle in $G_w$. By Lemma \ref{hermitian matrix}, we have
\begin{eqnarray*}
|i_+(G_w)-i_-(G_w)|&\leq& |i_+(G_w-v)-i_-(G_w-v)|+1\\
&\leq& c(G_w-v)+1\\
&\leq&c(G_w).
\end{eqnarray*}
This completes the proof.
\end{proof}
\section{Inertia of weighted unicyclic graphs}

Let $C_k^w$ be a weighted cycle with vertex set
$\{v_1,v_2,\cdots,v_k\}$ such that $v_i v_{i+1}\in E(C_k^w)$ ($1\leq
i\leq k-1$), $v_1v_k\in E(C_k^w)$. Let $w_i=w(v_iv_{i+1})$ and
$w_k=w(v_kv_1)$. For $C_k^w$, let $W=\prod_{i=1}^k w_i$. For an even
integer $k$, let $W_e=w_2w_4\cdots w_k$ and $W_o=\frac{W}{W_e}$.

 \begin{definition}
 A weighted even cycle
$C_k^w$ is said to be of Type A (resp. Type B) if
$W_o+(-1)^{\frac{k-2}{2}}W_e=0$ (resp.
$W_o+(-1)^{\frac{k-2}{2}}W_e\neq0$).

A weighted odd cycle $C_k^w$ is said to be  of Type C (resp. Type D)
if $(-1)^{\frac{k-1}{2}}W>0$ (resp. $(-1)^{\frac{k-1}{2}}W<0$).
\end{definition}

\begin{lemma}\label{cycle}
Let $C^w_n$ be a weighted cycle of order $n$. Then
$$\Big(i_+(C^w_n),i_-(C^w_n),i_0(C^w_n)\Big) =\left\{\begin{array}{l
l l l}\big(\frac{n-2}{2},\frac{n-2}{2}, 2\big),&\quad
\mbox{if $C_n^w$ is of Type A,}\\[3mm]
\big(\frac{n}{2},\frac{n}{2},0\big),&\quad \mbox{if $C_n^w$ is of Type B,}\\[3mm]
\big(\frac{n+1}{2},\frac{n-1}{2},0\big),&\quad \mbox{if $C_n^w$ is of Type C,}\\[3mm]
\big(\frac{n-1}{2},\frac{n+1}{2},0\big),&\quad \mbox{if $C_n^w$ is
of Type D.}
\end{array}\right. $$
\end{lemma}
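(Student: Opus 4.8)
The plan is to reduce everything to the value and sign of $\det A(C_n^w)$, which I will read off from the constant term $a_n^*$ of $P_{C_n^w}(\lambda)$ using Lemma~\ref{coefficients of weighted graphs}. The only elementary subgraphs of $C_n$ that span all $n$ vertices are the cycle $C_n$ itself (one component, one cycle, every edge lying on that cycle) and, when $n$ is even, the two perfect matchings $\{v_1v_2,v_3v_4,\dots,v_{n-1}v_n\}$ and $\{v_2v_3,v_4v_5,\dots,v_nv_1\}$ (each with $n/2$ components and no cycle). Hence $a_n^*=-2W$ when $n$ is odd and $a_n^*=-2W_oW_e+(-1)^{n/2}(W_o^2+W_e^2)$ when $n$ is even; since $a_n^*=(-1)^n\det A(C_n^w)$, I obtain $\det A(C_n^w)=2W$ for odd $n$ and $\det A(C_n^w)=(-1)^{n/2}\bigl(W_o-(-1)^{n/2}W_e\bigr)^2$ for even $n$.

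For odd $n$, all $w_i\neq0$ give $\det A(C_n^w)=2W\neq0$, so $i_0(C_n^w)=0$. Deleting one vertex turns $C_n^w$ into a weighted path on the even number $n-1$ of vertices, which by Lemma~\ref{weighted tree inertia} has positive and negative indices both $\frac{n-1}{2}$, so Lemma~\ref{hermitian matrix} forces $\frac{n-1}{2}\le i_{\pm}(C_n^w)\le\frac{n+1}{2}$; since $i_+(C_n^w)+i_-(C_n^w)=n$, exactly one of the two indices is $\frac{n+1}{2}$ and the other $\frac{n-1}{2}$. To decide which, use $i_0=0$, whence $\operatorname{sgn}\det A(C_n^w)=(-1)^{i_-(C_n^w)}$: thus $i_-(C_n^w)=\frac{n-1}{2}$ exactly when $(-1)^{(n-1)/2}W>0$ (Type C) and $i_-(C_n^w)=\frac{n+1}{2}$ exactly when $(-1)^{(n-1)/2}W<0$ (Type D). This gives the last two lines of the statement.

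For even $n$ the cycle is bipartite, so $i_+(C_n^w)=i_-(C_n^w)$ by Lemma~\ref{bipartite graph} and $i_0(C_n^w)$ is even. As $(-1)^{(n-2)/2}=-(-1)^{n/2}$, the Type A condition $W_o+(-1)^{(n-2)/2}W_e=0$ is exactly $W_o-(-1)^{n/2}W_e=0$, i.e.\ $\det A(C_n^w)=0$. In Type B the determinant is nonzero, so $i_0(C_n^w)=0$ and $i_+(C_n^w)=i_-(C_n^w)=\frac n2$. In Type A the determinant vanishes, so $i_0(C_n^w)\ge1$, hence $i_0(C_n^w)\ge2$ by parity; conversely, deleting a vertex yields a weighted path on the odd number $n-1$ of vertices, which by Lemma~\ref{weighted tree inertia} has $i_+=i_-=\frac{n-2}{2}$, so Lemma~\ref{hermitian matrix} gives $i_{\pm}(C_n^w)\ge\frac{n-2}{2}$ and therefore $i_0(C_n^w)=n-i_+-i_-\le2$. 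Thus $i_0(C_n^w)=2$ and $i_+(C_n^w)=i_-(C_n^w)=\frac{n-2}{2}$, completing the first two lines.

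The computation is short throughout; the steps needing genuine care are the bookkeeping of signs — reconciling the factor $(-1)^{n/2}$ and the quantity $W_o-(-1)^{n/2}W_e$ produced by the determinant with the $(-1)^{(n-2)/2}$ used to define Types A and B, and tracking the parity of $\frac{n-1}{2}$ against $\operatorname{sgn}(W)$ in the odd case — together with the observation that in Type A the vanishing of the determinant alone does not suffice, so one still needs the interlacing bound coming from the deleted-vertex path to pin the nullity down to exactly $2$.
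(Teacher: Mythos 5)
Your argument is correct, and it takes a genuinely different route from the paper. The paper proves Lemma~\ref{cycle} by a direct computation: it writes out $A(C^w_n)$ and performs a long chain of elementary congruence operations (justified by Sylvester's law, Lemma~\ref{sylvester law}), peeling off $2\times 2$ blocks until only a small residual block remains whose inertia encodes the type; the quantities $c_2$ and $c_4$ that emerge are exactly $W_o+(-1)^{(n-2)/2}W_e$ (up to a nonzero factor) and a quantity with the sign of $(-1)^{(n-1)/2}W$. You instead extract only the determinant, via the coefficient formula of Lemma~\ref{coefficients of weighted graphs} applied to the spanning elementary subgraphs of $C_n$ (the cycle itself plus, for even $n$, the two perfect matchings), and then pin down the inertia using Lemma~\ref{bipartite graph}, the interlacing bound of Lemma~\ref{hermitian matrix} against the deleted-vertex path (whose inertia is given by Lemma~\ref{weighted tree inertia}), and the sign identity $\operatorname{sgn}\det A=(-1)^{i_-}$ when the matrix is nonsingular. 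Your sign bookkeeping checks out: $(-1)^{(n-2)/2}=-(-1)^{n/2}$ turns the Type A condition into the vanishing of $\bigl(W_o-(-1)^{n/2}W_e\bigr)^2$, i.e.\ of $\det A$; and in the odd case the two candidate values $\tfrac{n-1}{2}$ and $\tfrac{n+1}{2}$ of $i_-$ have opposite parities, so the sign of $2W$ does distinguish Types C and D. What your approach buys is brevity and a conceptual explanation of why the four types are the right dichotomies (they are exactly the sign/vanishing pattern of the determinant); what the paper's computation buys is self-containedness and an explicit congruence reduction that is reused in spirit elsewhere (e.g.\ in Lemma~\ref{pendant twin}). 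You do rely on the standard facts that the constant term of $\det(\lambda I-A)$ equals $(-1)^n\det A$ and that the determinant is the product of the eigenvalues, neither of which is stated as a lemma in the paper, but both are elementary.
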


\begin{proof} Let $V(C_n^w)=\{v_1,v_2,\cdots,v_n\}$  and $v_iv_{i+1} \in E(C_n^w)$ ($1\leq i\leq
n-1$), $v_1v_n\in E(C_n^w)$. Let $w_i=w(v_iv_{i+1})$ ($1\leq i\leq
n-1$) and $w_n=w(v_nv_1)$. Then
$$A(C^w_n)=\left(\begin{array}{{ccccccc}}0&w_1&0&0&\cdots&0&w_n\\w_1&0&w_2&0&\cdots&0&0\\0&w_2&0&w_3&\cdots&0&0\\ 0&0&w_3&0&\cdots&0&0\\ \vdots&\vdots&\vdots&\vdots&\ddots&\vdots&\vdots\\
0&0&0&0&\cdots&0&w_{n-1}\\w_n&0&0&0&\cdots&w_{n-1}&0\end{array}\right).$$

\textbf{Case 1.} $n$ is even. Applying ECMOs on $A(C^w_n)$, we have
\begin{eqnarray*}
i_+(C^w_n)&=&i_+\left(\begin{array}{ccccccc}0&w_1&0&0&\cdots&0&w_n\\w_1&0&0&0&\cdots&0&0\\0&0&0&w_3&\cdots&0&-\frac{w_2w_n}{w_1}\\ 0&0&w_3&0&\cdots&0&0\\ \vdots&\vdots&\vdots&\vdots&\ddots&\vdots&\vdots\\
0&0&0&0&\cdots&0&w_{n-1}\\w_n&0&-\frac{w_2w_n}{w_1}&0&\cdots&w_{n-1}&0\end{array}\right)\\[2mm]
&=&i_+\left(\begin{array}{ccccccc}0&w_1&0&0&\cdots&0&0\\w_1&0&0&0&\cdots&0&0\\0&0&0&w_3&\cdots&0&-\frac{w_2w_n}{w_1}\\ 0&0&w_3&0&\cdots&0&0\\ \vdots&\vdots&\vdots&\vdots&\ddots&\vdots&\vdots\\
0&0&0&0&\cdots&0&w_{n-1}\\0&0&-\frac{w_2w_n}{w_1}&0&\cdots&w_{n-1}&0\end{array}\right)\\[2mm]
&=&i_+\left(\begin{array}{cc}0&w_1\\w_1&0\end{array}\right)+i_+\left(\begin{array}{ccccc}0&w_3&\cdots&0&-\frac{w_2w_n}{w_1}\\ w_3&0&\cdots&0&0\\ \vdots&\vdots&\ddots&\vdots&\vdots\\
0&0&\cdots&0&w_{n-1}\\-\frac{w_2w_n}{w_1}&0&\cdots&w_{n-1}&0\end{array}\right)\\[2mm]
&=&1+i_+\left(\begin{array}{ccccccc}0&w_3&0&0&\cdots&0&-\frac{w_2w_n}{w_1}\\ w_3&0&0&0&\cdots&0&0\\0&0&0&w_5&\cdots&0&(-1)^2\frac{w_2w_4w_n}{w_1w_3}\\0&0&w_5&0&\cdots&0&0\\ \vdots&\vdots&\vdots&\vdots&\ddots&\vdots&\vdots\\
0&0&0&0&\cdots&0&w_{n-1}\\-\frac{w_2w_n}{w_1}&0&(-1)^2\frac{w_2w_4w_n}{w_1w_3}&0&\cdots&w_{n-1}&0\end{array}\right)\\
\end{eqnarray*}
\begin{eqnarray*}
&=&2+i_+\left(\begin{array}{ccccc}0&w_5&\cdots&0&(-1)^2\frac{w_2w_4w_n}{w_1w_3}\\w_5&0&\cdots&0&0\\ \vdots&\vdots&\ddots&\vdots&\vdots\\
0&0&\cdots&0&w_{n-1}\\(-1)^2\frac{w_2w_4w_n}{w_1w_3}&0&\cdots&w_{n-1}&0\end{array}\right)\\[4mm]
&=& \cdots\\
&=&\frac{n-4}{2}+i_+\left(\begin{array}{cccc}0&w_{n-3}&0&c_1\\w_{n-3}&0&w_{n-2}&0\\0&w_{n-2}&0&w_{n-1}\\c_1&0&w_{n-1}&0\end{array}\right)\quad\quad
\mbox{\big(where $c_1=(-1)^\frac{n-4}{2}\frac{w_2w_4\cdots
w_{n-4}w_n}{w_1w_3\cdots
w_{n-5}}$\big)}\\[3mm]
&=&\frac{n-4}{2}+i_+\left(\begin{array}{cccc}0&w_{n-3}&0&0\\w_{n-3}&0&0&0\\0&0&0&c_2\\0&0&c_2&0\end{array}\right)\quad\quad
\mbox{\big(where $c_2=w_{n-1}+(-1)^\frac{n-2}{2}\frac{w_2w_4\cdots
w_{n-2}w_n}{w_1w_3\cdots
w_{n-3}}$\big)}\\[3mm]
&=&\frac{n-2}{2}+i_+\left(\begin{array}{cc}0&c_2\\
c_2&0\end{array}\right).
\end{eqnarray*}

Moreover, note that

$$i_+\left(\begin{array}{cc}0&c_2\\
c_2&0\end{array}\right)=\left\{\begin{array}{c c}0,&\mbox{if
$c_2=0$,}\\1,&\mbox{if $c_2\neq 0$.}\end{array}\right.$$

Therefore

$$i_+(C^w_n)=\left\{\begin{array}{cc}\frac{n-2}{2},&\quad
\mbox{if $w_1w_3\cdots
w_{n-3}w_{n-1}+(-1)^{\frac{n-2}{2}}w_2w_4\cdots w_{n-2}w_{n}=0$,}\\[3mm]
\frac{n}{2},&\quad \mbox{if $w_1w_3\cdots
w_{n-3}w_{n-1}+(-1)^{\frac{n-2}{2}}w_2w_4\cdots w_{n-2}w_{n}\neq
0$.}\end{array}\right.$$

Similarly, one has

$$i_-(C^w_n)=\left\{\begin{array}{cc}\frac{n-2}{2},&\quad
\mbox{if $w_1w_3\cdots
w_{n-3}w_{n-1}+(-1)^{\frac{n-2}{2}}w_2w_4\cdots w_{n-2}w_{n}=0$,}\\[3mm]
\frac{n}{2},&\quad \mbox{if $w_1w_3\cdots
w_{n-3}w_{n-1}+(-1)^{\frac{n-2}{2}}w_2w_4\cdots w_{n-2}w_{n}\neq
0$.}\end{array}\right.$$

\textbf{Case 2.} $n$ is odd.
By modifying the above procedure, we have
\begin{eqnarray*}
i_+(C^w_n)&=&\frac{n-3}{2}+i_+\left(\begin{array}{ccc}0&w_{n-2}&c_3\\w_{n-2}&0&w_{n-1}\\c_3&w_{n-1}&0\end{array}\right)\quad\quad
\mbox{where $c_3=(-1)^\frac{n-3}{2}\frac{w_2w_4\cdots w_{n-3}w_n}{w_1w_3\cdots w_{n-4}}$}\\[3mm]
\end{eqnarray*}
\begin{eqnarray*}
&=&\frac{n-3}{2}+i_+\left(\begin{array}{ccc}0&w_{n-2}&0\\w_{n-2}&0&0\\0&0&c_4\end{array}\right)\quad\quad
\mbox{where $c_4=2(-1)^\frac{n-1}{2}\frac{w_2w_4\cdots w_{n-1}w_n}{w_1w_3\cdots w_{n-2}}$}\\[3mm]
&=&\frac{n-1}{2}+\left\{\begin{array}{cc}1,&\mbox{if $c_4>0$,}\\0,&\mbox{if $c_4<0$.}\end{array}\right.\\[3mm]
i_-(C^w_n)&=&\frac{n-1}{2}+\left\{\begin{array}{cc}1,&\mbox{if
$c_4<0$,}\\0,&\mbox{if $c_4>0$.}\end{array}\right.
\end{eqnarray*}
It is evident that the sign of $c_4$ is the same as that of
$(-1)^\frac{n-1}{2}\big(\prod_{i=1}^n w_i\big)$, which leads to the
desired result.
\end{proof}

\subsection{Minimal positive (negative) index of weighted unicyclic graphs}

Let $U_{n,k}$ $(n>k)$ be an unweighted unicyclic graph obtained from
a cycle $C_k$ by attaching $n-k$ pendant vertices to a vertex of
$C_k$.
\begin{theorem}\label{thm3-3}
Let $G_w$ be a weighted unicyclic  graph of order $n$ with girth $k$
$(3\leq k\leq n-2)$. Then $i_+(G_w)\geq \lceil\frac{k}{2}\rceil$,
$i_-(G_w)\geq \lceil\frac{k}{2}\rceil$. These bounds are sharp if
 the underlying graph of $G_w$ is $U_{n,k}$.
\end{theorem}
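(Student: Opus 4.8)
The plan is to bound $i_+(G_w)$ and $i_-(G_w)$ from below by exhibiting a small induced subgraph of $G_w$ whose positive and negative indices are both exactly $\lceil k/2\rceil$ \emph{regardless of the edge weights}, and then to invoke monotonicity under induced subgraphs (Lemma \ref{induced subgraph}). A first instinct is to use the $k$-cycle $C_k$ itself, which is an induced subgraph of $G_w$ since $G_w$ is unicyclic. However, this is not enough: by Lemma \ref{cycle}, a weighted $k$-cycle of Type A (when $k$ is even) has $i_\pm=\tfrac{k-2}{2}$, and a weighted odd $k$-cycle of Type C or D has one of $i_+,i_-$ equal to $\tfrac{k-1}{2}$, both strictly below $\lceil k/2\rceil$. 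So the subgraph one really wants is the cycle $C_k$ together with one extra pendant vertex, i.e.\ a weighted copy of $U_{k+1,k}$.

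\textbf{Locating the subgraph.} Since $G_w$ is connected, unicyclic and of order $n>k$, there is at least one vertex outside $V(C_k)$, and by connectedness some vertex $u\notin V(C_k)$ is adjacent to a cycle vertex $v_i$. Because $G_w$ is unicyclic, $u$ cannot be adjacent to a second cycle vertex (that would produce a second cycle), and $C_k$ has no chord; hence the subgraph $H_w$ induced on $V(C_k)\cup\{u\}$ has underlying graph exactly $U_{k+1,k}$, namely $C_k$ with a single pendant vertex $u$ hung at $v_i$.

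\textbf{Computing $In(H_w)$ and concluding.} In $H_w$ the vertex $u$ is pendant with unique neighbour $v_i$, so Lemma \ref{deleting pendent vertex} gives $i_+(H_w)=i_+(H_w-v_i-u)+1$ and $i_-(H_w)=i_-(H_w-v_i-u)+1$. Now $H_w-v_i-u$ is a weighted path on $k-1$ vertices, a tree of matching number $\lfloor (k-1)/2\rfloor$, so by Lemma \ref{weighted tree inertia} (equivalently, the weighted-path Example) its positive and negative indices both equal $\lfloor (k-1)/2\rfloor$. Therefore $i_\pm(H_w)=\lfloor (k-1)/2\rfloor+1=\lceil k/2\rceil$ for every weighting, and Lemma \ref{induced subgraph} yields $i_+(G_w)\ge\lceil k/2\rceil$ and $i_-(G_w)\ge\lceil k/2\rceil$, as claimed. (The parity identity $\lfloor (k-1)/2\rfloor+1=\lceil k/2\rceil$ is checked separately for $k$ even and $k$ odd; it is routine.) For sharpness, suppose the underlying graph of $G_w$ is $U_{n,k}$. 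The $n-k\ge 2$ pendant vertices attached at the single cycle vertex $v_1$ are pendant twins, so repeated use of Lemma \ref{pendant twin} deletes all but one of them without changing $i_+$ or $i_-$; the result is a weighted graph with underlying graph $U_{k+1,k}$, whose indices were just shown to be $\lceil k/2\rceil$. Hence $i_+(G_w)=i_-(G_w)=\lceil k/2\rceil$ and the bounds are attained.

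The only genuinely delicate point is the realization that $C_k$ alone does not suffice and that one must enlarge it by exactly one pendant vertex; after that, the argument is entirely mechanical, resting on the fact that a pendant deletion drops each of $i_+,i_-$ by exactly one and that the leftover weighted path (indeed any weighted tree) has inertia determined solely by its matching number. No extra structural hypotheses beyond $3\le k\le n-2$ are needed for the lower bound: we use only $n\ge k+1$ to find the pendant vertex, and $n-k\ge 2$ merely guarantees the presence of pendant twins in $U_{n,k}$ for the sharpness half.
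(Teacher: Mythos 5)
Your proof is correct and follows essentially the same route as the paper: locate an induced copy of $U_{k+1,k}$ inside $G_w$, compute $i_\pm(U_{k+1,k})=i_\pm(P_{k-1})+1=\lceil k/2\rceil$ via the pendant-deletion Lemma \ref{deleting pendent vertex}, and conclude by the induced-subgraph monotonicity of Lemma \ref{induced subgraph}. The only cosmetic difference is in the sharpness half, where you strip the extra pendants of $U_{n,k}$ using the pendant-twin Lemma \ref{pendant twin}, while the paper reaches the same reduction by citing Lemmas \ref{weighted tree inertia} and \ref{deleting pendent vertex}; your extra care in justifying that the induced subgraph on $V(C_k)\cup\{u\}$ is exactly $U_{k+1,k}$ (no chords, no double attachment) is a welcome elaboration of what the paper calls ``evident.''
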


\begin{proof}
It is evident that the underlying graph of $G_w$ must contain
$U_{k+1,k}$ as an induced subgraph. Moreover, by Lemma \ref{deleting
pendent vertex},
$i_+(U_{k+1,k})=i_+(P_{k-1})+1=\lceil\frac{k}{2}\rceil$, so by Lemma
\ref{induced subgraph} $i_+(G_w)\geq \lceil\frac{k}{2}\rceil$.
Similarly, $i_-(G_w)\geq \lceil\frac{k}{2}\rceil$. By Lemmas
\ref{weighted tree inertia} and \ref{deleting pendent vertex}, any
weighted graph with $U_{n,k}$ as its underlying graph has the same
positive (negative) index as $U_{k+1,k}$. Therefore
$i_+(U_{n,k})=\lceil\frac{k}{2}\rceil$ and
$i_-(U_{n,k})=\lceil\frac{k}{2}\rceil$.
\end{proof}

\begin{corollary}\label{maximal nullity}
Let $G_w$ be a weighted unicyclic  graph of order $n$ with girth $k$
$(3\leq k\leq n-2)$. Then $i_0(G_w)\leq n-2\lceil\frac{k}{2}\rceil$.
The bound is sharp if
 the underlying graph of $G_w$ is $U_{n,k}$.
\end{corollary}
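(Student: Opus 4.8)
The plan is to read this off directly from Theorem \ref{thm3-3} together with the basic identity $r(G_w)=i_+(G_w)+i_-(G_w)=n-i_0(G_w)$ recorded in the introduction. First I would invoke Theorem \ref{thm3-3} to obtain $i_+(G_w)\geq\lceil\frac{k}{2}\rceil$ and $i_-(G_w)\geq\lceil\frac{k}{2}\rceil$ for any weighted unicyclic graph $G_w$ of order $n$ with girth $k$, $3\leq k\leq n-2$. Adding these two inequalities gives $r(G_w)=i_+(G_w)+i_-(G_w)\geq 2\lceil\frac{k}{2}\rceil$, and then rearranging $i_0(G_w)=n-r(G_w)$ yields the claimed bound $i_0(G_w)\leq n-2\lceil\frac{k}{2}\rceil$.

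For the sharpness statement, I would specialize to the case where the underlying graph of $G_w$ is $U_{n,k}$. The final lines of the proof of Theorem \ref{thm3-3} already establish $i_+(U_{n,k})=i_-(U_{n,k})=\lceil\frac{k}{2}\rceil$ (via Lemmas \ref{weighted tree inertia} and \ref{deleting pendent vertex}, which also guarantee independence from the weights). Hence $r(G_w)=2\lceil\frac{k}{2}\rceil$ and $i_0(G_w)=n-2\lceil\frac{k}{2}\rceil$, so the bound is attained.

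There is essentially no obstacle here: the corollary is a one-line consequence of the theorem plus the rank–nullity relation, and the sharpness part reuses exactly the computation already performed for the sharpness part of Theorem \ref{thm3-3}. If anything required care, it would merely be noting that the hypothesis $k\leq n-2$ ensures $U_{n,k}$ genuinely has pendant vertices so that Lemma \ref{deleting pendent vertex} applies, but this is already built into the statement of Theorem \ref{thm3-3}.
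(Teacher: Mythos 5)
Your proposal is correct and follows exactly the route the paper intends: the corollary is stated there without proof as an immediate consequence of Theorem \ref{thm3-3}, obtained by summing the two index bounds and applying $i_0(G_w)=n-i_+(G_w)-i_-(G_w)$, with sharpness coming from the computation $i_+(U_{n,k})=i_-(U_{n,k})=\lceil\frac{k}{2}\rceil$ already carried out in that theorem's proof. Nothing further is needed.
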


\begin{corollary}\label{minimum positive index}
Let $G_w$ be a weighted unicyclic  graph of order $n$ with pendant
vertices. Then $i_+(G_w)\geq 2$, $i_-(G_w)\geq 2$ and $i_0(G_w)\leq
n-4$. These bounds are sharp if
 the underlying graph of $G_w$ is
 $U_{n,3}$ or $U_{n,4}$.

\end{corollary}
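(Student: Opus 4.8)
The plan is to derive everything directly from Theorem \ref{thm3-3} and its proof. First I would observe that if a weighted unicyclic graph $G_w$ of order $n$ has a pendant vertex, then not every vertex can lie on the unique cycle, so its girth $k$ satisfies $3\le k\le n-1$; equivalently $n\ge k+1$. This inequality is exactly what is needed for the induced-subgraph step in the proof of Theorem \ref{thm3-3}: since $G_w$ is connected and $n>k$, some vertex not on the cycle is adjacent to a cycle vertex, and the subgraph of $G_w$ induced on the $k$ cycle vertices together with that vertex is $U_{k+1,k}$, a weighted cycle $C_k$ with one pendant vertex attached. Hence, just as there, Lemma \ref{deleting pendent vertex} gives $i_+(U_{k+1,k})=i_+(P_{k-1})+1=\lceil\frac{k}{2}\rceil$ and likewise $i_-(U_{k+1,k})=\lceil\frac{k}{2}\rceil$, so by Lemma \ref{induced subgraph}, $i_+(G_w)\ge\lceil\frac{k}{2}\rceil$ and $i_-(G_w)\ge\lceil\frac{k}{2}\rceil$.

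Next I would note that $k\ge 3$ forces $\lceil\frac{k}{2}\rceil\ge 2$, hence $i_+(G_w)\ge 2$ and $i_-(G_w)\ge 2$, and then combine with $i_+(G_w)+i_-(G_w)+i_0(G_w)=n$ to get $i_0(G_w)\le n-4$, proving all three bounds. The only point worth flagging is that Theorem \ref{thm3-3} is stated under the restriction $k\le n-2$, whereas here the borderline case $k=n-1$ (a cycle with a single pendant vertex) must also be handled; but the induced-subgraph argument above is valid whenever $n\ge k+1$, so no new idea is required — only this bit of bookkeeping, which I expect to be the sole subtlety.

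Finally, for sharpness, suppose the underlying graph of $G_w$ is $U_{n,3}$ or $U_{n,4}$, so $k\in\{3,4\}$ and $\lceil\frac{k}{2}\rceil=2$. Exactly as in the last part of the proof of Theorem \ref{thm3-3}, repeatedly deleting pendant vertices via Lemmas \ref{weighted tree inertia} and \ref{deleting pendent vertex} (equivalently, using that any weighted graph with underlying graph $U_{n,k}$ has the same positive and negative indices as $U_{k+1,k}$) shows $i_+(G_w)=i_-(G_w)=\lceil\frac{k}{2}\rceil=2$, whence $i_0(G_w)=n-4$. Thus all three bounds are attained, completing the proof.
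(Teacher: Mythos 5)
Your proof is correct and follows essentially the same route the paper intends: the corollary is presented there as an immediate consequence of Theorem \ref{thm3-3} (via the induced $U_{k+1,k}$ and Lemmas \ref{deleting pendent vertex} and \ref{induced subgraph}), with sharpness from $U_{n,3}$ and $U_{n,4}$. Your explicit handling of the borderline case $k=n-1$, which the theorem's hypothesis $k\leq n-2$ formally excludes but whose induced-subgraph argument still applies whenever $n\geq k+1$, is a worthwhile piece of bookkeeping that the paper silently glosses over.
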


\subsection{Weighted unicyclic  graphs with the minimal positive (negative) index}

The following result is an extension of Theorems 3.1 and 3.3 in
\cite{fan2}.

\begin{lemma}\label{matched vertex}
Let $T_w$ be a weighted tree with $u \in V(T_w)$ and $G_w^0$ be a
weighted graph different from $T_w$. Let $G_w$ be a graph obtained from
$G_w^0$ and $T_w$ by joining $u$ with certain vertices of $G_w^0$.
Then the following statements hold:
\begin{enumerate} [(1).]
\item If
$u$ is a saturated vertex in $T_w$, then
$$i_+(G_w)=i_+(T_w)+i_+(G_w^0)=m(T_w)+i_+(G_w^0),$$
$$i_-(G_w)=i_-(T_w)+i_-(G_w^0)=m(T_w)+i_-(G_w^0).$$
\item If $u$ is an
 unsaturated vertex in $T_w$, then
$$i_+(G_w)=i_+(T_w-u)+i_+(G_w^0+u)=m(T_w)+i_+(G_w^0+u),$$
$$i_-(G_w)=i_-(T_w-u)+i_-(G_w^0+u)=m(T_w)+i_-(G_w^0+u),$$
 where
$G_w^0+u$ is the subgraph of $G_w$ induced by the vertices of
$G_w^0$ and $u$.
\end{enumerate}
\end{lemma}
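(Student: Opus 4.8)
The plan is to reduce everything to repeated application of Lemma~\ref{deleting pendent vertex} (deletion of a pendant vertex together with its neighbor), exactly as in the proof of Lemma~\ref{weighted tree inertia}, and to track where the vertex $u$ ends up in this process. The key structural fact I would use is the following: in a weighted tree $T_w$, there is a maximum matching missing $u$ (i.e. $u$ is unsaturated) if and only if one can peel off pendant-neighbor pairs from $T_w$ in such a way that $u$ is never deleted and is left isolated at the end; and if $u$ is saturated, then one can peel off such pairs so that at some stage $u$ is itself a pendant vertex that gets removed together with its unique neighbor. This is the weighted analogue of the classical Gallai--Edmonds type statement for trees, and I expect proving it cleanly is the main obstacle; it can be handled by induction on $|V(T_w)|$, choosing a pendant vertex $v$ with neighbor $t$, and splitting into the cases $t=u$, $v=u$, and $\{t,v\}\cap\{u\}=\emptyset$, using that $m(T_w)=m(T_w-t-v)+1$ and that a maximum matching of $T_w$ either uses the edge $tv$ or (when $v$ is pendant) can be modified to use it.

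\medskip

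Granting this, consider case (1): $u$ is saturated in $T_w$. I would peel off pendant-neighbor pairs from the copy of $T_w$ inside $G_w$, using Lemma~\ref{deleting pendent vertex} at each step (each removed vertex is genuinely pendant in the current whole graph $G_w$, since the attaching edges go only to $u$). Continue until $u$ becomes pendant; at that stage its unique neighbor $t$ lies in what remains of $T_w$, the rest of $T_w$ having been stripped away, and removing the pair $u,t$ (again by Lemma~\ref{deleting pendent vertex}) detaches $T_w$ entirely and leaves exactly $G_w^0$. Each of the $m(T_w)$ deletion steps adds $1$ to both $i_+$ and $i_-$, giving $i_\pm(G_w)=m(T_w)+i_\pm(G_w^0)$. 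Combined with Lemma~\ref{weighted tree inertia}, which gives $i_\pm(T_w)=m(T_w)$, this is the claimed identity. The point that $G_w^0\neq T_w$, i.e. that $G_w^0$ is nonempty, is needed only to ensure the vertices we delete really are pendant and nothing degenerates.

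\medskip

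For case (2): $u$ is unsaturated in $T_w$. Now peel off pendant-neighbor pairs from $T_w$ (inside $G_w$) in the order that avoids $u$ and leaves $u$ isolated within $T_w$; this takes $m(T_w)$ steps and, by Lemma~\ref{deleting pendent vertex}, contributes $m(T_w)$ to each of $i_+$ and $i_-$. What remains after these deletions is precisely the induced subgraph $G_w^0+u$ (the vertex $u$ together with $G_w^0$ and the attaching edges), because all other vertices of $T_w$ have been removed. Hence $i_\pm(G_w)=m(T_w)+i_\pm(G_w^0+u)$. Finally, $i_\pm(T_w-u)=m(T_w-u)=m(T_w)$ since deleting an unsaturated vertex does not change the matching number (any maximum matching of $T_w$ missing $u$ is still a matching of $T_w-u$, and conversely), which supplies the first equality in the statement.

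\medskip

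One remaining technical check I would include: at every stage of the peeling the vertex being removed must be pendant \emph{in the current ambient graph}, not merely in the residual tree. This holds because the only edges of $G_w$ not internal to $T_w$ are incident with $u$, so as long as we have not yet deleted $u$ (case 2) or reached the final step where $u$ is pendant (case 1), a vertex that is pendant in the residual tree is pendant in the whole residual graph. This is the one place where the hypothesis that $G_w^0$ attaches to $T_w$ only through the single vertex $u$ is essential, and it is what makes the induction go through.
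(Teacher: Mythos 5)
The paper itself gives no proof of this lemma --- it is merely cited as an extension of Theorems 3.1 and 3.3 of \cite{fan2} --- so your proposal has to stand on its own. Your overall strategy (iterate Lemma \ref{deleting pendent vertex}, using the fact that $G_w^0$ attaches to $T_w$ only at $u$ so that vertices of $T_w\setminus\{u\}$ that are pendant in the residual tree are pendant in the residual ambient graph) is the right one, and your case (2) is essentially complete: one can peel $m(T_w)$ pendant--neighbor pairs avoiding $u$ exactly when $u$ is unsaturated, and the bookkeeping $i_\pm(T_w-u)=m(T_w-u)=m(T_w)$ is correct.

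Case (1), however, contains a genuine error. Your structural claim that a saturated $u$ can be arranged to become ``a pendant vertex that gets removed together with its unique neighbor'' is false as stated: if $T_w$ is a star centered at $u$, then $u$ is saturated but never has degree one at any stage of any peeling. Worse, even in instances where $u$ does become pendant in the residual \emph{tree}, it is never pendant in the residual \emph{ambient graph}, since it keeps its edges into $G_w^0$; so Lemma \ref{deleting pendent vertex} cannot be invoked with $u$ in the role of the pendant vertex. Your closing ``technical check'' explicitly exempts ``the final step where $u$ is pendant'' from the verification that deleted vertices are pendant in the ambient graph --- which is precisely an admission that this step is unjustified. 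The statement you actually need is that when $u$ is saturated one can peel so that at some stage $u$ is the \emph{unique neighbor of a pendant vertex} $v\in V(T_w)\setminus\{u\}$ of the residual tree: if $u$ currently has no pendant neighbor, delete a pendant pair $\{w,t\}$ with $t\neq u$; then $u$ remains saturated (a maximum matching of the residual missing $u$ would extend by $wt$ to one of $T_w$ missing $u$), and since a saturated vertex cannot become isolated, $u$ eventually acquires a pendant neighbor. Such a $v$ has $u$ as its only neighbor in all of $G_w$, so Lemma \ref{deleting pendent vertex} legitimately deletes the pair $\{v,u\}$, detaching $G_w^0$ and leaving the remaining fragments of $T_w$ (which contribute $m(T_w)$ deletions in total, plus harmless isolated vertices) to be peeled separately. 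With this repair your argument goes through.
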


Let $G_w$ be a weighted unicyclic  graph and $C^w_k$ be the unique
weighted cycle of $G_w$. For each vertex $v\in V(C^w_k)$, let
$G_w\{v\}$ be the weighted tree rooted at $v$ and containing  $v$.
Clearly, $G_w\{v\}$ is an induced subgraph of $G_w$. From Lemma
\ref{matched vertex}, it follows that

\begin{lemma}\label{inertia of unicyclic graph}
Let $G_w$ be a weighted unicyclic graph and $C^w_k$ be the unique
weighted cycle of $G_w$. Then the following statements hold:
\begin{enumerate}[(1).]
\item If there exists a vertex $v\in V(C^w_k)$ which is saturated in
$G_w\{v\}$, then $$i_+(G_w)=i_+(G_w\{v\})+i_+(G_w-G_w\{v\}),$$
$$i_-(G_w)=i_-(G_w\{v\})+i_-(G_w-G_w\{v\}).$$

\item If there does not exist a vertex $v\in V(C^w_k)$ which is saturated in
$G_w\{v\}$, then $$i_+(G_w)=i_+(G_w-C^w_k)+i_+(C^w_k),$$
$$i_-(G_w)=i_-(G_w-C^w_k)+i_-(C^w_k).$$
\end{enumerate}
\end{lemma}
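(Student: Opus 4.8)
The plan is to reduce everything to Lemma \ref{matched vertex} by viewing the unicyclic graph $G_w$ as a union of trees hung off the cycle $C_k^w$. For each $v \in V(C_k^w)$ the rooted tree $G_w\{v\}$ is precisely the component of $v$ in the forest $G_w - (E(C_k^w))$, i.e. the maximal subtree of $G_w$ that meets $C_k^w$ only at $v$; these trees partition $V(G_w)$ and their union together with the edges of $C_k^w$ gives back $G_w$. The single structural dichotomy driving the proof is whether \emph{some} cycle vertex $v$ is saturated inside its own pendant tree $G_w\{v\}$ (case 1) or \emph{no} cycle vertex has this property (case 2).

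For case (1): pick such a $v$, set $T_w := G_w\{v\}$, and let $G_w^0 := G_w - V(T_w)$. Since $T_w$ meets the rest of $G_w$ only through $v$, the graph $G_w$ is exactly obtained from the disjoint pair $(G_w^0, T_w)$ by joining $u := v$ to certain vertices of $G_w^0$ (namely the two neighbours of $v$ on $C_k^w$); and $G_w^0 \neq T_w$ because $G_w^0$ still contains the rest of the cycle, so it is not a tree. Because $v$ is saturated in $T_w$, part (1) of Lemma \ref{matched vertex} applies verbatim and yields $i_\pm(G_w) = i_\pm(T_w) + i_\pm(G_w^0) = i_\pm(G_w\{v\}) + i_\pm(G_w - G_w\{v\})$, which is the claimed identity (the $m(T_w)$ form is not needed here). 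I would note in passing that $G_w^0$ need not be connected, but Lemma \ref{matched vertex} as stated only requires $G_w^0$ to be a weighted graph, so this causes no difficulty.

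For case (2): now \emph{every} $v \in V(C_k^w)$ is unsaturated in $G_w\{v\}$. The idea is to peel the trees off the cycle one vertex at a time using part (2) of Lemma \ref{matched vertex}. Order the cycle vertices $v_1, \dots, v_k$; for $v_1$ take $T_w := G_w\{v_1\}$ and $G_w^0 := G_w - V(G_w\{v_1\})$, which is $G_w$ with that whole tree removed. Since $v_1$ is unsaturated in $T_w$, part (2) gives $i_\pm(G_w) = i_\pm(T_w - v_1) + i_\pm(G_w^0 + v_1)$, and $T_w - v_1$ is a forest, so by Corollary \ref{weighted forest inertia} $i_\pm(T_w - v_1) = m(T_w - v_1) = m(G_w\{v_1\})$ (using that $v_1$ unsaturated means a maximum matching of $G_w\{v_1\}$ avoids $v_1$, hence $m(T_w - v_1) = m(T_w)$). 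Repeating this on $v_2, \dots, v_k$ — at each step the relevant rooted tree minus its root is a forest — strips $G_w$ down to the bare cycle $C_k^w$ and accumulates $\sum_{i=1}^k m(G_w\{v_i\}) = m(G_w - C_k^w)$, giving $i_\pm(G_w) = i_\pm(C_k^w) + m(G_w - C_k^w) = i_\pm(C_k^w) + i_\pm(G_w - C_k^w)$, again by Corollary \ref{weighted forest inertia}.

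The step that needs the most care is the bookkeeping in case (2): one must check that after removing $v_1$ and its tree, the vertex $v_2$ is still unsaturated in the rooted tree $G_w\{v_2\}$ (it is, since that tree is unchanged by the surgery at $v_1$), that the "$G_w^0 + u$" produced at each stage is exactly the unicyclic-minus-some-trees graph fed into the next stage, and that the matching numbers add up as claimed — this last point is where the hypothesis "no cycle vertex is saturated in its tree" is genuinely used, since it guarantees each $m(G_w\{v_i\} - v_i) = m(G_w\{v_i\})$ so that the total peeled-off matching is $m(G_w) - m(C_k^w)$-free of interaction with the cycle. Everything else is a direct invocation of Lemmas \ref{matched vertex} and Corollary \ref{weighted forest inertia}.
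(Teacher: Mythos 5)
Your proposal is correct and takes essentially the same route as the paper, which states this lemma as an immediate consequence of Lemma \ref{matched vertex} without writing out the details; your case (1) is the verbatim application of part (1) of that lemma, and your iterative peeling in case (2) (together with the observation that $v_i$ unsaturated gives $m(G_w\{v_i\}-v_i)=m(G_w\{v_i\})$ and that $G_w-C^w_k$ is the disjoint union of the $G_w\{v_i\}-v_i$) is precisely the bookkeeping the authors leave implicit. The only blemish is the garbled parenthetical ``$m(G_w)-m(C^w_k)$-free'' near the end, which should simply read that the accumulated matching number is $m(G_w-C^w_k)$.
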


Let $G^*$ be an unweighted unicyclic  graph of order $n$ obtained
from a cycle $C_k$ and a star $K_{1,n-k-1}$  of order $n-k$ by
inserting an edge between  a vertex on $C_k$ and the center of
$K_{1,n-k-1}$. Let $\mathcal{U}^w_{n,k}$ $(3\leq k\leq n-2)$ be the
set of weighted unicyclic  graphs of order $n$ with girth $k$. In
the following we shall characterize all weighted unicyclic graphs
with minimal positive (negative) index $\lceil\frac{k}{2}\rceil$
among all graphs in $\mathcal{U}^w_{n,k}$.


\begin{theorem}\label{extremal graphs1}
Let $G_w\in \mathcal{U}^w_{n,k}$  be a weighted unicyclic
 graph associated  the unique weighted cycle  $C^w_k$
with vertex set $\{v_1,v_2,\cdots,v_k\}$. Then  $i_+(G_w)=\lceil\frac{k}{2}\rceil$ if and only if either (1) or (2) holds.

(1). If there exists a vertex $v_i\in V(C^w_k)$ which is saturated
in $G_w\{v_i\}$, then $m(G_w\{v_i\})=1$ (equivalently, $G_w\{v_i\}$
is a star) and $m(G_w-G_w\{v_i\})=\lfloor\frac{k-1}{2}\rfloor$.

(2).  If there does not exist a vertex $v\in V(C^w_k)$ which is
saturated in $G_w\{v\}$, then  $G\cong G^*$ and $C_k^w$ is of
$\left\{\begin{array}{c}\mbox{Type A for even k,}\\ \mbox{Type D for
odd k.}\end{array}\right.$.
\end{theorem}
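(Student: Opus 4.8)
The plan is to invoke Lemma \ref{inertia of unicyclic graph}, which splits into exactly the two cases (1) and (2) of the statement, and to analyze each separately. In case (1), some $v_i\in V(C^w_k)$ is saturated in $G_w\{v_i\}$, so Lemma \ref{inertia of unicyclic graph}(1) gives $i_+(G_w)=i_+(G_w\{v_i\})+i_+(G_w-G_w\{v_i\})$. Since $G_w\{v_i\}$ is a weighted tree, Lemma \ref{weighted tree inertia} gives $i_+(G_w\{v_i\})=m(G_w\{v_i\})$; and $G_w-G_w\{v_i\}$ is a weighted forest (removing the rooted tree at $v_i$ breaks the cycle), so Corollary \ref{weighted forest inertia} gives $i_+(G_w-G_w\{v_i\})=m(G_w-G_w\{v_i\})$. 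Hence $i_+(G_w)=m(G_w\{v_i\})+m(G_w-G_w\{v_i\})$. Now $G_w-G_w\{v_i\}$ is a forest on $n-|V(G_w\{v_i\})|$ vertices containing the path $C^w_k-v_i=P_{k-1}$ as a subgraph, and one must check the matching number of that forest is at most $\lfloor\frac{k-1}{2}\rfloor$ with equality characterizing the extremal configuration — actually the cleaner route is: $i_+(G_w)\ge \lceil k/2\rceil$ always (Theorem \ref{thm3-3}), and $m(G_w\{v_i\})\ge 1$ since $v_i$ is saturated, while $m(G_w-G_w\{v_i\})\le \lfloor (k-1)/2\rfloor$ is NOT automatic — here I would instead argue that $i_+(G_w)=\lceil k/2\rceil=m(G_w\{v_i\})+m(G_w-G_w\{v_i\})$ forces $m(G_w\{v_i\})=1$ and $m(G_w-G_w\{v_i\})=\lceil k/2\rceil-1=\lfloor(k-1)/2\rfloor$, using that $m(G_w\{v_i\})\ge1$ and that the forest $G_w-G_w\{v_i\}$, containing $P_{k-1}$ plus possibly more, has matching number at least $\lfloor(k-1)/2\rfloor$; the sum being exactly $\lceil k/2\rceil$ pins both values. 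Conversely, if $m(G_w\{v_i\})=1$ (a star) and $m(G_w-G_w\{v_i\})=\lfloor(k-1)/2\rfloor$, the same identity gives $i_+(G_w)=\lceil k/2\rceil$.

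For case (2), no $v\in V(C^w_k)$ is saturated in $G_w\{v\}$, so Lemma \ref{inertia of unicyclic graph}(2) gives $i_+(G_w)=i_+(G_w-C^w_k)+i_+(C^w_k)$, where $G_w-C^w_k$ is a weighted forest. By Lemma \ref{cycle}, $i_+(C^w_k)$ equals $\frac{k-2}{2}$ (Type A), $\frac{k}{2}$ (Type B), $\frac{k+1}{2}$ (Type C), or $\frac{k-1}{2}$ (Type D); the minimum value $\lceil k/2\rceil$ is attained only for Type A when $k$ is even ($\frac{k-2}{2}<\frac k2=\lceil k/2\rceil$ — wait, $\frac{k-2}2 < \lceil k/2\rceil$, so Type A gives a strictly smaller cycle contribution, good) and only for Type D when $k$ is odd ($\frac{k-1}{2}=\lceil k/2\rceil-1$ vs $\frac{k+1}2=\lceil k/2\rceil$). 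So to have $i_+(G_w)=\lceil k/2\rceil$ we need $i_+(G_w-C^w_k)=1$ (even $k$, Type A) or $i_+(G_w-C^w_k)=1$ (odd $k$, Type D) — both requiring the forest $G_w-C^w_k$ to have matching number exactly $1$. A matching-number-one forest on $n-k$ vertices with no isolated-vertex obstruction to the non-saturation hypothesis is exactly a star $K_{1,n-k-1}$ together with the structure that makes no cycle vertex saturated; tracing which vertex of $C^w_k$ the star attaches to, and that it attaches through a single edge to the star's center (otherwise some cycle vertex becomes saturated, or the forest's matching number exceeds $1$), forces $G\cong G^*$. Conversely one checks $G^*$ with $C^w_k$ of the stated type realizes the bound via the same splitting.

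The main obstacle I expect is the bookkeeping in case (2): one must simultaneously (a) rule out Types B and C, and the "wrong" parity pairings, using Lemma \ref{cycle}; (b) show the forest $G_w-C^w_k$ having $i_+=1$ (i.e. matching number $1$) together with the standing hypothesis "no cycle vertex is saturated in its rooted tree" forces the underlying graph to be exactly $G^*$ and not some other unicyclic graph with a small pendant forest; and (c) verify the converse directions cleanly, in particular that for $G^*$ no cycle vertex is saturated in its rooted tree (so that Lemma \ref{inertia of unicyclic graph}(2) genuinely applies) — this needs a short matching-number computation on $G^*$ showing $m(G^*)=\lfloor k/2\rfloor + 1$ with every maximum matching avoiding saturating a cycle vertex through a pendant path. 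The negative-index statement is identical by replacing "Type D" reasoning with "Type C" — but since the theorem as stated only addresses $i_+$, and Type A is symmetric while Type C/D swap under $i_+\leftrightarrow i_-$, I would remark that the analogous characterization for $i_-(G_w)=\lceil k/2\rceil$ replaces Type D by Type C; this is not needed for the present statement. Everything else is a routine consequence of Lemmas \ref{weighted tree inertia}, \ref{cycle}, \ref{inertia of unicyclic graph} and Theorem \ref{thm3-3}.
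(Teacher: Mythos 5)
Your proposal follows essentially the same route as the paper's proof: split via Lemma \ref{inertia of unicyclic graph} into the saturated and unsaturated cases, reduce to matching numbers via Lemma \ref{weighted tree inertia} and Corollary \ref{weighted forest inertia} in case (1) (pinning $m(G_w\{v_i\})=1$ and $m(G_w-G_w\{v_i\})=\lfloor\frac{k-1}{2}\rfloor$ from the lower bounds $m(G_w\{v_i\})\ge 1$ and $m(G_w-G_w\{v_i\})\ge\lfloor\frac{k-1}{2}\rfloor$ coming from $P_{k-1}$), and use Lemma \ref{cycle} plus the observation that $m(G_w-C^w_k)=0$ would force pendant vertices on the cycle (contradicting non-saturation, thereby excluding Types B and C) in case (2). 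The obstacles you flag are exactly the points the paper handles, and your resolution of them matches the paper's argument.
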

\begin{proof}
Assume that there exists a vertex $v_i\in V(C^w_k)$ which is
saturated in $G_w\{v_i\}$. Without loss of generality, we assume
that $v_i=v_1$. We shall verify the following claim.

\textbf{Claim.} $G_w\{v_1\}$ is a star and
$m(G_w-G_w\{v_1\})=\lfloor\frac{k-1}{2}\rfloor$.

Since $v_1\in V(C^w_k)$ is saturated in $G_w\{v_1\}$, from Lemmas
\ref{weighted tree inertia} and \ref{inertia of unicyclic graph}, we
have
\begin{eqnarray*}
i_+(G_w)&=&i_+(G_w\{v_1\})+i_+(G_w-G_w\{v_1\})\\
&=&m(G_w\{v_1\})+m(G_w-G_w\{v_1\}).
\end{eqnarray*}

If $k$ is even, then
$$m(G_w\{v_1\})+m(G_w-G_w\{v_1\})=\frac{k}{2}.$$ Note that
$m(G_w\{v_1\})\geq 1$ and $m(G_w-G_w\{v_1\})\geq \frac{k-2}{2}$
since $G_w\{v_1\}$ contains $P_{k-1}$. So it follows that
$m(G_w\{v_1\})=1$ and $m(G_w-G_w\{v_1\})=\frac{k-2}{2}$.

If $k$ is odd, then
$$m(G_w\{v_1\})+m(G_w-G_w\{v_1\})=\frac{k+1}{2}.$$ Note that
$m(G_w\{v_1\})\geq 1$ and $m(G_w-G_w\{v_1\})\geq \frac{k-1}{2}$. So
it follows that $m(G_w\{v_1\})=1$ and
$m(G_w-G_w\{v_1\})=\frac{k-1}{2}$. This completes the proof of
Claim.

\medskip
Now assume that any vertex $v\in V(C^w_k)$ is unsaturated in
$G_w\{v\}$. By Lemmas \ref{weighted tree inertia} and \ref{inertia
of unicyclic graph}, we have
\begin{eqnarray*}
i_+(G_w)&=&i_+(C^w_k)+i_+(G_w-C^w_k)\\
&=&i_+(C^w_k)+m(G_w-C^w_k).
\end{eqnarray*}
Then we have
$$m(G_w-C^w_k)=\left\{\begin{array}{l l l}
1,&\quad \mbox{if $C_k^w$ is of Type $A$,}\\
0,&\quad \mbox{if $C_k^w$ is of Type $B$,}\\
0,&\quad \mbox{if $C_k^w$ is of Type $C$,}\\
1,&\quad \mbox{if $C_k^w$ is of Type $D$.}\\
\end{array}\right.$$

If $m(G_w-C^w_k)=0$, then any vertex not on $C^w_k$ is a pendant
vertex which is adjacent to a vertex on $C^w_k$ in $G_w$. This
contradicts the fact that $v$ is unsaturated in $G_w\{v\}$ for any
$v\in V(C^w_k)$. So $m(G_w-C^w_k)=1$, i.e. $G_w-C^w_k$ is a star.
This implies the result.
\end{proof}

Similar to Theorem \ref{extremal graphs1}, we have
\begin{theorem}\label{extremal graphs2}
Let $G_w\in \mathcal{U}^w_{n,k}$  be a weighted unicyclic
 graph associated the unique weighted cycle $C^w_k$
with vertex set $\{v_1,v_2,\cdots,v_k\}$. Then  $i_-(G_w)=\lceil\frac{k}{2}\rceil$ if and only if either (1) or (2) holds.

(1). If there exists a vertex $v_i\in V(C^w_k)$ which is saturated
in $G_w\{v_i\}$, then $m(G_w\{v_i\})=1$ (equivalently, $G_w\{v_i\}$
is a star) and $m(G_w-G_w\{v_i\})=\lfloor\frac{k-1}{2}\rfloor$.

(2).   If there does not exist a vertex $v_i\in V(C^w_k)$ which is saturated in
$G_w\{v_i\}$, then
 $G\cong G^*$ and $C_k^w$ is of $\left\{\begin{array}{c}\mbox{Type A for even k,}\\ \mbox{Type C for
odd k.}\end{array}\right.$.
\end{theorem}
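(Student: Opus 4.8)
The plan is to mirror the proof of Theorem~\ref{extremal graphs1} almost verbatim, exchanging the roles of $i_+$ and $i_-$ wherever they occur and tracking the corresponding sign changes in the cycle-type analysis. First I would invoke Lemma~\ref{inertia of unicyclic graph}, which splits the argument into the two cases stated in the theorem. In case~(1), when some $v_i\in V(C^w_k)$ is saturated in $G_w\{v_i\}$, we have $i_-(G_w)=i_-(G_w\{v_i\})+i_-(G_w-G_w\{v_i\})=m(G_w\{v_i\})+m(G_w-G_w\{v_i\})$ by Lemmas~\ref{weighted tree inertia} and \ref{inertia of unicyclic graph}; this is exactly the same expression that appeared in Theorem~\ref{extremal graphs1}, since $i_+$ and $i_-$ of a weighted forest both equal the matching number. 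Consequently the same counting — using $m(G_w\{v_i\})\geq 1$ and $m(G_w-G_w\{v_i\})\geq\lfloor\frac{k-1}{2}\rfloor$ because $G_w\{v_i\}$ contains $P_{k-1}$ — forces $m(G_w\{v_i\})=1$ (so $G_w\{v_i\}$ is a star) and $m(G_w-G_w\{v_i\})=\lfloor\frac{k-1}{2}\rfloor$, and conversely these two conditions give $i_-(G_w)=\lceil\frac{k}{2}\rceil$. So case~(1) is literally identical to that of Theorem~\ref{extremal graphs1}.

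The only genuine difference is in case~(2), where no vertex of $C^w_k$ is saturated in its rooted tree. Here Lemmas~\ref{weighted tree inertia} and \ref{inertia of unicyclic graph} give $i_-(G_w)=i_-(C^w_k)+m(G_w-C^w_k)$, and I would read off from Lemma~\ref{cycle} the values of $i_-(C^w_k)$ by cycle type: $\frac{k-2}{2}$ for Type~A, $\frac{k}{2}$ for Type~B (both even), $\frac{k-1}{2}$ for Type~C and $\frac{k+1}{2}$ for Type~D (both odd). Setting $i_-(G_w)=\lceil\frac{k}{2}\rceil$ then yields $m(G_w-C^w_k)=1$ precisely when $C^w_k$ is of Type~A (even $k$) or Type~C (odd $k$), and $m(G_w-C^w_k)=0$ for Type~B or Type~D. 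The point where the two theorems diverge is exactly this: for $i_+$ the odd-cycle contribution $\frac{k+1}{2}$ comes from Type~C, whereas for $i_-$ it comes from Type~D, so the "surplus" of one — which must be cancelled by requiring $m(G_w-C^w_k)=1$ rather than $0$ — flips to the opposite odd type.

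To finish case~(2) I would argue, exactly as in Theorem~\ref{extremal graphs1}, that $m(G_w-C^w_k)=0$ is impossible: it would force every vertex off the cycle to be a pendant vertex adjacent to a cycle vertex, making that cycle vertex saturated in its rooted tree, contrary to the case hypothesis. Hence $m(G_w-C^w_k)=1$, i.e.\ $G_w-C^w_k$ is a star (a single edge together with possibly more pendant leaves at one end), which together with the requirement that attaching it to $C^w_k$ leaves no cycle vertex saturated forces the underlying graph to be $G^*$; and the cycle must be of Type~A for even $k$ and Type~C for odd $k$. For the converse direction one checks that if $G\cong G^*$ with $C^w_k$ of the stated type, then no cycle vertex is saturated in its rooted tree and $i_-(G_w)=i_-(C^w_k)+1=\lceil\frac{k}{2}\rceil$.

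I do not expect a serious obstacle: the entire argument is a sign-bookkeeping variant of Theorem~\ref{extremal graphs1}, and the only thing to be careful about is correctly pairing each cycle type with the right value of $i_-(C^w_k)$ from Lemma~\ref{cycle} — in particular remembering that Type~C and Type~D swap their roles when one passes from $i_+$ to $i_-$. This is precisely why the paper can simply write "Similar to Theorem~\ref{extremal graphs1}" and state the result with the single change "Type~C for odd $k$" in place of "Type~D for odd $k$."
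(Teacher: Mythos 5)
Your proposal is correct and is precisely the argument the paper intends: the paper gives no separate proof of Theorem~\ref{extremal graphs2}, stating only that it is ``similar to Theorem~\ref{extremal graphs1}'', and your write-up carries out exactly that mirror argument, with case~(1) unchanged (since $i_+$ and $i_-$ of a weighted forest both equal the matching number) and the only real difference being the correct swap of Type~C and Type~D in case~(2) when reading $i_-(C_k^w)$ from Lemma~\ref{cycle}. No gaps beyond the level of detail the paper itself tolerates.
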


 From Corollary \ref{maximal nullity} and Theorems
\ref{extremal graphs1}, \ref{extremal graphs2}, it follows that
\begin{theorem}\label{extremal graphs3}
Let $G_w\in \mathcal{U}^w_{n,k}$  be a weighted unicyclic
 graph associated the unique weighted cycle $C^w_k$
with vertex set $\{v_1,v_2,\cdots,v_k\}$. Then $i_0(G_w)=n-2\lceil\frac{k}{2}\rceil$ if and only if either (1) or (2) holds.

(1).  If there exists a vertex $v_i\in V(C^w_k)$ which is saturated
in $G_w\{v_i\}$, then $m(G_w\{v_i\})=1$ (equivalently, $G_w\{v_i\}$
is a star) and $m(G_w-G_w\{v_i\})=\lfloor\frac{k-1}{2}\rfloor$.

(2).   If there does not exist a vertex $v_i\in V(C^w_k)$ which is saturated in
$G_w\{v_i\}$, then $G\cong G^*$ and $C_k^w$ is of Type A.
\end{theorem}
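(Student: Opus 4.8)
The plan is to obtain this as a direct combination of the two preceding characterizations. Since $G_w$ is unicyclic, $r(G_w)=i_+(G_w)+i_-(G_w)$, so $i_0(G_w)=n-i_+(G_w)-i_-(G_w)$. By Theorem~\ref{thm3-3} both $i_+(G_w)$ and $i_-(G_w)$ are at least $\lceil\frac{k}{2}\rceil$, hence $i_0(G_w)\le n-2\lceil\frac{k}{2}\rceil$ (this is Corollary~\ref{maximal nullity}), and equality holds if and only if $i_+(G_w)=\lceil\frac{k}{2}\rceil$ \emph{and} $i_-(G_w)=\lceil\frac{k}{2}\rceil$ simultaneously. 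Thus it suffices to intersect the descriptions of these two extremal situations given by Theorems~\ref{extremal graphs1} and~\ref{extremal graphs2}, splitting along the same dichotomy they use.

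First I would treat the case in which some $v_i\in V(C^w_k)$ is saturated in $G_w\{v_i\}$. For such a $v_i$ one has $i_+(G_w)=i_-(G_w)=m(G_w\{v_i\})+m(G_w-G_w\{v_i\})$ (Lemma~\ref{inertia of unicyclic graph}(1) and Corollary~\ref{weighted forest inertia}), with $m(G_w\{v_i\})\ge1$ and $m(G_w-G_w\{v_i\})\ge\lfloor\frac{k-1}{2}\rfloor$; so both $i_+(G_w)=\lceil\frac{k}{2}\rceil$ and $i_-(G_w)=\lceil\frac{k}{2}\rceil$ are equivalent to the \emph{same} pair of equalities $m(G_w\{v_i\})=1$, $m(G_w-G_w\{v_i\})=\lfloor\frac{k-1}{2}\rfloor$ (as established in Theorems~\ref{extremal graphs1} and~\ref{extremal graphs2}). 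Hence their conjunction is exactly item~(1).

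Next, in the case where no vertex of $V(C^w_k)$ is saturated in its rooted tree, Theorems~\ref{extremal graphs1} and~\ref{extremal graphs2} both force $G\cong G^*$; for even $k$ they both require $C^w_k$ to be of Type~A, while for odd $k$ Theorem~\ref{extremal graphs1} requires Type~D and Theorem~\ref{extremal graphs2} requires Type~C. Since $(-1)^{\frac{k-1}{2}}W$ is either positive or negative, a weighted odd cycle cannot be of both Type~C and Type~D, so this branch can occur only when $k$ is even and $C^w_k$ is of Type~A, which is item~(2). As the saturated and unsaturated cases are exhaustive and mutually exclusive, combining them yields the stated equivalence. The whole argument is essentially bookkeeping on top of results already proved; the only point deserving attention is this last incompatibility of Types C and D for odd $k$, which is exactly why item~(2) correctly asks only for Type~A (an even-cycle condition in the first place).
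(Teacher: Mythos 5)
Your proposal is correct and follows exactly the route the paper intends: the paper derives this theorem directly from Corollary~\ref{maximal nullity} and Theorems~\ref{extremal graphs1} and~\ref{extremal graphs2} without further detail, and your argument simply fills in that combination, including the one point worth making explicit (that Types C and D are mutually exclusive for odd $k$, so the unsaturated case survives only for even $k$ with Type A).
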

\subsection{Weighted unicyclic graphs with two positive (negative)
eigenvalues}

First we define the following four classes of unweighted unicyclic
graphs:
\begin{enumerate}[{}]
\item $U_1^{r,s}$ ($r,s\geq 0$, $r+s=n-3$) is a unicyclic graph of order $n$ obtained by attaching $r$
and $s$ pendant vertices at two different vertices of $C_3$,
respectively, $r$ or $s$ is allowed to be $0$.
\item $U_2^{p,q}$ ($p,q\geq 0$, $p+q=n-4$) is a unicyclic graph of order $n$  obtained  by attaching $p$,
$q$ pendant vertices at two nonadjacent vertices of $C_4$,
respectively, $p$ or $q$ is allowed to be $0$.
\item $U_3^{n-4}$ is a unicyclic graph of order $n$  obtained from $C_3$ and $K_{1,n-4}$ by inserting an edge between
 a vertex of $C_3$ and the center of $K_{1,n-4}$.
\item $U_4^{n-5}$ is a unicyclic graph of order $n$ obtained from $C_4$ and $K_{1, n-5}$ by inserting an edge between a vertex of $C_4$ and the center of $K_{1,n-5}$.
\end{enumerate}

\begin{figure}[ht]
\center
\includegraphics [width = 10cm]{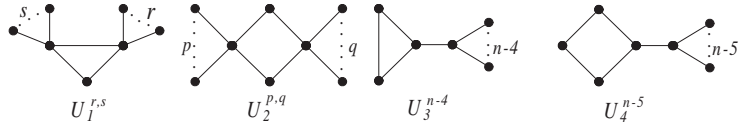}
\caption { \textit{ Four unicyclic graphs $U_1^{r,s}$, $U_2^{p,q}$,
$U_3^{n-4}$, $U_4^{n-5}$} }
 \end{figure}

\begin{theorem}\label{positive 2}
Let $G_w$ be a weighted unicyclic graph of order $n$. Then
$i_+(G_w)=2$ if and only if $G_w$ is one of the following graphs:
$C_3^w$ of Type C; $C_4^w$  of  Type $B$; $C_5^w$ of Type D; $C_6^w$
 of  Type $A$; the weighted graphs with $U_1^{r,s}$ or
$U_2^{p,q}$ as the underlying graph; the weighted graphs with
$U_3^{n-4}$ as the underlying graph in which the cycle $C_3^w$ is of
Type $D$; the weighted graphs with $U_4^{n-5}$ as the underlying
graph in which the cycle $C_4^w$ is of  Type $A$.
\end{theorem}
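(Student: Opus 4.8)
The plan is to use the structural dichotomy from Lemma~\ref{inertia of unicyclic graph} to reduce the problem to two cases, and then pin down which weighted cycles and which trees can occur when $i_+(G_w)=2$. By Corollary~\ref{minimum positive index}, if $G_w$ has pendant vertices then $i_+(G_w)\geq 2$, while if $G_w$ has no pendant vertex it is a cycle $C_n^w$, handled directly by Lemma~\ref{cycle}: there $i_+(C_n^w)=2$ forces $n-2=4$ (Type A), $n=4$ (Type B), $n+1=4$ (Type C), or $n-1=4$ (Type D), giving exactly $C_6^w$ of Type A, $C_4^w$ of Type B, $C_3^w$ of Type C, $C_5^w$ of Type D. So from now on assume $G_w$ is not a cycle, let $C_k^w$ be its unique cycle, and split according to whether some $v\in V(C_k^w)$ is saturated in $G_w\{v\}$.

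\textbf{Case 1: some $v\in V(C_k^w)$ is saturated in $G_w\{v\}$.} Here Lemma~\ref{inertia of unicyclic graph}(1) gives $i_+(G_w)=i_+(G_w\{v\})+i_+(G_w-G_w\{v\})=m(G_w\{v\})+i_+(G_w-G_w\{v\})$, using Lemma~\ref{weighted tree inertia}. Since $G_w-G_w\{v\}$ is a unicyclic graph (still containing $C_k^w$) or possibly smaller, and $m(G_w\{v\})\geq 1$, the only way to get total $2$ is $m(G_w\{v\})=1$ and $i_+(G_w-G_w\{v\})=1$. The condition $i_+(H_w)=1$ for a weighted graph $H_w$ is very restrictive: I would argue (via Lemma~\ref{induced subgraph} applied to induced paths $P_3$ and induced copies of $2K_2$, each of which has positive index $\geq 2$... actually $i_+(P_3)=1$) that $i_+(H_w)=1$ forces every component of the underlying graph to be a complete multipartite-like object; more precisely, for a unicyclic such $H_w$ one needs the cycle to be $C_3^w$ of Type D (so $i_+(C_3)=1$) with all other vertices pendant attached so as not to raise the index, OR $C_4^w$ of Type A, which by Lemma~\ref{cycle} has $i_+=1$. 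Tracking the allowed pendant attachments that keep $i_+=1$ (using Lemma~\ref{deleting pendent vertex} in reverse: attaching a pendant to a quasi-pendant-free spot raises $i_+$), I expect to recover exactly the underlying graphs $U_1^{r,s}$ (cycle $C_3$, pendants at two vertices — weight-independent since such $G_w$ turns out to have a saturated cycle vertex and the relevant subtree-star structure), $U_2^{p,q}$, $U_3^{n-4}$ with $C_3^w$ of Type D, and $U_4^{n-5}$ with $C_4^w$ of Type A. One must double-check for $U_1^{r,s}$ and $U_2^{p,q}$ that $i_+=2$ holds for \emph{all} weightings (this follows because for these graphs there is always a saturated cycle vertex whose rooted tree is a star, so $i_+=1+i_+(\text{forest})=2$ independent of weights), whereas for $U_3,U_4$ the cycle type genuinely matters.

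\textbf{Case 2: no $v\in V(C_k^w)$ is saturated in $G_w\{v\}$.} Now Lemma~\ref{inertia of unicyclic graph}(2) gives $i_+(G_w)=i_+(C_k^w)+m(G_w-C_k^w)$. Since $G_w$ is not a cycle, $m(G_w-C_k^w)\geq 1$ (some vertex off the cycle exists, and if it were pendant-adjacent-to-cycle then its cycle-neighbor would be saturated in its rooted tree, contradiction — so actually one argues $m(G_w-C_k^w)\ge 1$ and more carefully that every off-cycle component contributes). Hence $i_+(C_k^w)+m(G_w-C_k^w)=2$ forces $i_+(C_k^w)=1$ and $m(G_w-C_k^w)=1$. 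By Lemma~\ref{cycle}, $i_+(C_k^w)=1$ means $k-2=2$ with Type A, i.e. $k=4$ Type A, or $k+1=2$ (impossible), or $k-1=2$ with Type D, i.e. $k=3$ Type D; one also checks $k=3$ Type C gives $i_+=2\ne1$ and small even cases. And $m(G_w-C_k^w)=1$ together with the no-saturated-vertex hypothesis forces $G_w-C_k^w$ to be a single star attached through one edge to the cycle, i.e. $G\cong G^*$ — this is exactly the argument already carried out in the proof of Theorem~\ref{extremal graphs1}(2), which I would cite. For $k=3$ this gives $U_3^{n-4}$ with $C_3^w$ of Type D, and for $k=4$ it gives $U_4^{n-5}$ with $C_4^w$ of Type A, consistent with (and absorbed into) Case~1's list.

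\textbf{Main obstacle.} The genuinely delicate part is the converse/sufficiency direction for the pendant families: showing that \emph{every} weighting of $U_1^{r,s}$ and $U_2^{p,q}$ gives $i_+=2$ while for $U_3^{n-4}$ and $U_4^{n-5}$ only the stated cycle types work. This requires carefully identifying, in each family, whether a saturated cycle vertex exists (it does for $U_1,U_2$; it does not for $U_3,U_4$ because the cycle vertex adjacent to the star-center is matched \emph{into} the star), and then applying the correct branch of Lemma~\ref{inertia of unicyclic graph}. The bookkeeping of matching numbers $m(G_w\{v\})$ and $m(G_w-G_w\{v\})$ across the vertices of $C_3$ and $C_4$ — and confirming no other underlying graph sneaks in (e.g. ruling out pendants at all three vertices of $C_3$, or at adjacent vertices of $C_4$, or longer pendant paths) via Lemma~\ref{induced subgraph} with forbidden induced subgraphs of positive index $\geq 3$ — is where most of the real work lies.
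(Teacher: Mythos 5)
Your overall strategy (split off the pure cycles via Lemma \ref{cycle}, then use the saturated/unsaturated dichotomy of Lemma \ref{inertia of unicyclic graph}) is the same machinery the paper uses; the paper just packages it differently, first forcing $k\in\{3,4\}$ via Theorem \ref{thm3-3} and then quoting Theorem \ref{extremal graphs1} wholesale. Your Case 2 and your treatment of cycles are sound. But Case 1 contains a genuine structural error that derails the argument.

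In Case 1 you write that $G_w-G_w\{v\}$ ``is a unicyclic graph (still containing $C_k^w$)'' and then try to classify unicyclic graphs with positive index $1$, concluding that the cycle must be $C_3^w$ of Type D or $C_4^w$ of Type A. This is wrong: $G_w\{v\}$ is the tree rooted at the \emph{cycle} vertex $v$ and contains $v$, so deleting it destroys the cycle and $G_w-G_w\{v\}$ is a \emph{forest} containing $P_{k-1}$. The correct continuation is therefore $i_+(G_w-G_w\{v\})=m(G_w-G_w\{v\})\ge\lfloor\frac{k-1}{2}\rfloor$ by Corollary \ref{weighted forest inertia}, which forces $k\le 4$ and $m(G_w\{v\})=m(G_w-G_w\{v\})=1$, i.e.\ both pieces are stars --- exactly condition (1) of Theorem \ref{extremal graphs1}. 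Your detour matters for the conclusion, not just the exposition: if one really needed the (nonexistent) cycle of $G_w-G_w\{v\}$ to be of Type D or Type A, the families $U_1^{r,s}$ and $U_2^{p,q}$ would acquire spurious weight restrictions, whereas the theorem asserts $i_+=2$ for \emph{all} weightings of these graphs. (Your own sufficiency remark, where you correctly write $i_+=1+i_+(\text{forest})$, contradicts the Case 1 setup.) Finally, the passage from ``both pieces are stars with $k\le4$'' to the explicit list $U_1^{r,s}$, $U_2^{p,q}$ is left as ``I expect to recover,'' so the enumeration step still needs to be carried out; once the forest correction is made, that enumeration is routine and matches the paper's.
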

\begin{proof}
The sufficiency can be easily verified by Lemmas \ref{deleting pendent
vertex} and \ref{cycle}. Next we consider the necessity. Assume that
the girth of $G_w$ is $k$.

If $k=n$, by virtue of Lemma \ref{cycle}, $G_w$ is one of the
following weighted cycles: $C_3^w$  of Type $C$; $C_4^w$  of Type
$B$; $C_5^w$ of Type $D$; $C_6^w$  of Type $A$.

If $k=n-1$, by Lemmas \ref{deleting pendent vertex} and
\ref{weighted tree inertia}, $G_w$ is one of the weighted graphs
with $U_1^{1,0}$ or $U_2^{1,0}$ as the underlying graph.

If $3\leq k\leq n-2$, then by Theorem~\ref{thm3-3}, $2=i_{+}(G_w)\ge \lceil\frac{k}{2}\rceil $, which implies that $k=3$ or $4$ and $i_{+}(G_w)= \lceil\frac{k}{2}\rceil $. Hence it follows from  Theorem \ref{extremal graphs1} that (1) or (2) in Theorem~\ref{extremal graphs1} holds. We consider the following two cases.
If (1) in Theorem~\ref{extremal graphs1} holds, i.e, there exists a vertex $v_i\in V(C_k^w)$ which is saturated in $G_w\{v_i\}$, then $G_w\{v_i\}$ is  a star and $m(G_w-G_w\{v_i\}=\lfloor\frac{k-1}{2}\rfloor=1$, i.e,  $G_w-G_w\{v_i\}$ is a star.
 Hence  $G_w$ is one of the
following graphs: the weighted graphs with $U_1^{r,s}$ or
$U_2^{p,q}$ as the underlying graph; the weighted graphs with
$U_3^{n-4}$ as the underlying graph in which the cycle $C_3^w$ is of
Type $D$; the weighted graphs with $U_4^{n-5}$ as the underlying
graph in which the cycle $C_4^w$ is of Type $A$.

 If (2) in Theorem~\ref{extremal graphs1} holds, i.e,
 there does not exist a vertex $v_i\in V(C_k^w)$ which is saturated in $G_W\{v\}$, then
 $G_w$  is $U_{3}^{n-4}$  and $C_3^w$ is Type $D$, or $G_w$ is
 $U_{4}^{n-5}$ and $C_{4}^w$ is the Type A. Hence the assertion
 holds.
\end{proof}

Similar to the above result, we have
\begin{theorem}\label{negative 2}
Let $G_w$ be a weighted unicyclic graph of order $n$. Then
$i_-(G_w)=2$ if and only if $G_w$ is one of the following graphs:
$C_3^w$ of Type D; $C_4^w$  of Type B; $C_5^w$ of Type C; $C_6^w$ of
Type A; the weighted graphs with $U_1^{r,s}$ or $U_2^{p,q}$ as the
underlying graph; the weighted graphs with $U_3^{n-4}$ as the
underlying graph in which the cycle $C_3^w$ is of Type C; the
weighted graphs with $U_4^{n-5}$ as the underlying graph in which
the cycle $C_4^w$ is of Type A.
\end{theorem}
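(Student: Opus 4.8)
The plan is to derive Theorem~\ref{negative 2} from Theorem~\ref{positive 2} by a general duality between the positive and negative index of a weighted graph, thereby avoiding a full repetition of the case analysis in the proof of Theorem~\ref{positive 2}. The key observation is that if $G_w=(G,w)$ and $G_{-w}=(G,-w)$ denotes the weighted graph obtained by negating every edge weight, then $A(G_{-w})=-A(G_w)$, so $i_+(G_{-w})=i_-(G_w)$ and $i_-(G_{-w})=i_+(G_w)$. Hence $i_-(G_w)=2$ if and only if $i_+(G_{-w})=2$, and we may apply Theorem~\ref{positive 2} to $G_{-w}$. So first I would record this sign-flip lemma (it is immediate from Sylvester's law, Lemma~\ref{sylvester law}, applied with $P=iI$ over the reals it is even simpler since $-A$ is congruent to nothing special — just note eigenvalues of $-A$ are the negatives of those of $A$).

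Next I would track how negating all weights affects the Type A/B/C/D classification of a weighted cycle $C_k^w$. For an even cycle with $k$ vertices, negating all $w_i$ multiplies both $W_o$ and $W_e$ by $(-1)^{k/2}$, so the condition $W_o+(-1)^{(k-2)/2}W_e=0$ is preserved: Type A stays Type A and Type B stays Type B. For an odd cycle, $W=\prod_{i=1}^k w_i$ changes sign under negation of all $k$ weights (since $k$ is odd), so the sign of $(-1)^{(k-1)/2}W$ flips: Type C becomes Type D and vice versa. Thus, applying Theorem~\ref{positive 2} to $G_{-w}$: the cycles with $i_+=2$ are $C_3^w$ of Type C, $C_4^w$ of Type B, $C_5^w$ of Type D, $C_6^w$ of Type A; pulling back along $w\mapsto -w$, the cycles $G_w$ with $i_-(G_w)=2$ are exactly $C_3^w$ of Type D, $C_4^w$ of Type B, $C_5^w$ of Type C, $C_6^w$ of Type A — matching the statement. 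The same translation handles the pendant cases: $U_3^{n-4}$ with $C_3^w$ of Type D corresponds to $U_3^{n-4}$ with $C_3^w$ of Type C, and $U_4^{n-5}$ with $C_4^w$ of Type A stays Type A; the families $U_1^{r,s}$ and $U_2^{p,q}$ are bipartite (their cycles $C_3$, $C_4$ — wait, $C_3$ is not bipartite), so for those one instead notes that the characterization in Theorem~\ref{positive 2} placed no condition on the cycle weights at all, hence neither does the negated version, and the family is closed under $w\mapsto -w$.

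Alternatively, if one prefers a self-contained argument paralleling the proof of Theorem~\ref{positive 2}, the plan is identical in structure: split on the girth $k$. For $k=n$ invoke Lemma~\ref{cycle} directly; for $k=n-1$ use Lemma~\ref{deleting pendent vertex} together with Lemma~\ref{weighted tree inertia} to reduce to a cycle; for $3\le k\le n-2$ use Theorem~\ref{thm3-3} to force $k\in\{3,4\}$ and $i_-(G_w)=\lceil k/2\rceil$, then apply Theorem~\ref{extremal graphs2} and split into its cases (1) and (2). Case (1) gives $G_w\{v_i\}$ a star and $G_w-G_w\{v_i\}$ a star, yielding the $U_1^{r,s}$, $U_2^{p,q}$, $U_3^{n-4}$, $U_4^{n-5}$ families (with Type~D for $C_3$ and Type~A for $C_4$ read off from the last two coordinates of Lemma~\ref{cycle} as in the positive case); case (2) forces $G\cong G^*$ with $C_3^w$ of Type D or $C_4^w$ of Type A.

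I expect no serious obstacle: the duality $A\mapsto -A$ is essentially automatic, and the only point requiring care is the bookkeeping of how Type A/B/C/D behaves under negation of all weights, which is the short computation sketched above. The mildly delicate point is that for odd cycles one must use that the number of edges negated equals $k$, which is odd, so $W$ genuinely changes sign — whereas for even cycles the parity works out so that the Type A/B dichotomy is invariant; getting these two parities straight is the whole content of the argument, and everything else is a transcription of Theorem~\ref{positive 2}.
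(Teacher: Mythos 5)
Your proposal is correct, and your primary route is genuinely different from the paper's. The paper gives no separate proof of Theorem~\ref{negative 2}: it simply says ``Similar to the above result,'' meaning one reruns the proof of Theorem~\ref{positive 2} with $i_-$ in place of $i_+$, invoking Theorem~\ref{extremal graphs2} instead of Theorem~\ref{extremal graphs1} and reading off the $i_-$ column of Lemma~\ref{cycle}; that is exactly your second, ``self-contained'' sketch. Your first route --- negate all weights, use $A(G_{-w})=-A(G_w)$ so that $i_-(G_w)=i_+(G_{-w})$, apply Theorem~\ref{positive 2} to $G_{-w}$, and pull back --- is cleaner and avoids redoing the case analysis entirely; the only real content is your parity bookkeeping, which is right: for even $k$ both $W_o$ and $W_e$ pick up $(-1)^{k/2}$ so the Type A/B dichotomy is invariant, while for odd $k$ the product $W$ changes sign so Types C and D swap, and this transforms the list in Theorem~\ref{positive 2} into exactly the list in Theorem~\ref{negative 2}. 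Two cosmetic blemishes that you should clean up but that do not affect correctness: the parenthetical about applying Sylvester's law with $P=iI$ is wrong (that $P$ is not real, and no congruence is needed --- the spectrum of $-A$ is just the negative of the spectrum of $A$), and the aborted remark about $U_1^{r,s}$ being bipartite should be deleted, since your actual justification there (Theorem~\ref{positive 2} imposes no weight condition on the $U_1^{r,s}$ and $U_2^{p,q}$ families, and these families are closed under $w\mapsto -w$) is the correct one.
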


Note that there does not exist weighted unicyclic graph $G_w$ with
$i_+(G_w)=1$ and $i_-(G_w)=3$, or $i_+(G_w)=3$ and $i_-(G_w)=1$ from
Lemma \ref{difference}. Combining this fact with Theorems
\ref{positive 2} and \ref{negative 2}, we have

\begin{theorem}\label{nullity n-4}
Let $G_w$ be a weighted unicyclic graph of order $n$. Then
$i_0(G_w)=n-4$ if and only if $G_w$ is one of the following graphs:
$C_4^w$  of Type B; $C_6^w$  of Type A; the weighted graphs with
$U_1^{r,s}$ or $U_2^{p,q}$ as the underlying graph; the weighted
graphs with $U_4^{n-5}$ as the underlying graph in which the cycle
$C_4^w$ is of Type A.
\end{theorem}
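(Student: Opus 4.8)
The plan is to derive Theorem \ref{nullity n-4} purely as a bookkeeping consequence of Theorems \ref{positive 2} and \ref{negative 2} together with Lemma \ref{difference}, without reworking any spectral computation. Recall that for a weighted unicyclic graph $G_w$ of order $n$ we always have $i_+(G_w)+i_-(G_w)+i_0(G_w)=n$, so $i_0(G_w)=n-4$ is equivalent to $i_+(G_w)+i_-(G_w)=4$. Since $G_w$ contains a cycle, $i_+(G_w)\geq 1$ and $i_-(G_w)\geq 1$; hence the pair $(i_+(G_w),i_-(G_w))$ must be one of $(1,3)$, $(2,2)$, or $(3,1)$.

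The first step is to rule out the pairs $(1,3)$ and $(3,1)$. A weighted unicyclic graph has exactly one cycle, so the number $c(G_w)$ of odd cycles is at most $1$, and Lemma \ref{difference} gives $|i_+(G_w)-i_-(G_w)|\leq 1$. This immediately eliminates $(1,3)$ and $(3,1)$, so $i_0(G_w)=n-4$ forces $i_+(G_w)=i_-(G_w)=2$.

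The second step is to intersect the two characterizations. By Theorem \ref{positive 2}, $i_+(G_w)=2$ holds exactly for the list: $C_3^w$ of Type C, $C_4^w$ of Type B, $C_5^w$ of Type D, $C_6^w$ of Type A, the weighted graphs with underlying graph $U_1^{r,s}$ or $U_2^{p,q}$, the weighted graphs with underlying graph $U_3^{n-4}$ whose cycle $C_3^w$ is of Type D, and the weighted graphs with underlying graph $U_4^{n-5}$ whose cycle $C_4^w$ is of Type A. By Theorem \ref{negative 2}, $i_-(G_w)=2$ holds exactly for the analogous list with $C_3^w$ of Type D, $C_5^w$ of Type C, and $U_3^{n-4}$ with $C_3^w$ of Type C, the rest being identical. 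The conclusion is the intersection: $C_3^w$ cannot be simultaneously Type C and Type D, so no triangle-cycle and no $U_3^{n-4}$ graph survives; $C_5^w$ cannot be simultaneously Type D and Type C, so $C_5^w$ is excluded; the surviving members are $C_4^w$ of Type B, $C_6^w$ of Type A, the weighted graphs with underlying graph $U_1^{r,s}$ or $U_2^{p,q}$ (no cycle-type restriction, since $U_1^{r,s}$ has a $C_3$ but the listed conditions on it in both theorems are vacuous, and $U_2^{p,q}$ has a $C_4$ with no restriction), and the weighted graphs with underlying graph $U_4^{n-5}$ whose $C_4^w$ is of Type A. This is precisely the asserted list.

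The only point requiring any care — and the closest thing to an obstacle — is making sure the intersection of the two lists is taken correctly entry by entry, in particular confirming that the $U_1^{r,s}$ and $U_2^{p,q}$ families appear in both theorems with no contradictory cycle-sign hypothesis (so they pass unchanged), while every entry that carries a cycle-type label gets that label forced to be consistent across both theorems, killing $C_3^w$, $C_5^w$, and $U_3^{n-4}$. Once that matching is laid out, the proof is complete. I would write it as: observe $i_0(G_w)=n-4\iff i_+(G_w)+i_-(G_w)=4$; use Lemma \ref{difference} to force $i_+(G_w)=i_-(G_w)=2$; then cite Theorems \ref{positive 2} and \ref{negative 2} and take the intersection of the two characterizations, noting which cycle-type conditions become incompatible.
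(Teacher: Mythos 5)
Your proposal is correct and follows essentially the same route as the paper: the authors likewise invoke Lemma \ref{difference} to exclude the index pairs $(1,3)$ and $(3,1)$ and then obtain the theorem by intersecting the lists of Theorems \ref{positive 2} and \ref{negative 2}. Your entry-by-entry matching of the two lists (discarding $C_3^w$, $C_5^w$, and $U_3^{n-4}$ because their cycle-type labels are incompatible) is exactly the bookkeeping the paper leaves implicit.
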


 From Theorem
\ref{nullity n-4}, we  get that

\begin{corollary} \cite{fan1}
Let $\Gamma$ be a unicyclic signed graph of order $n$. Then
$i_0(\Gamma)=n-4$ if and only if $\Gamma$ is one of the following
signed graphs of order $n$: unbalance $C_4$; unbalance $C_6$; the
signed graphs with $U_1^{r,s}$ or $U_2^{p,q}$ as the underlying
graph; the balance signed graph with $U_4^{n-5}$ as the underlying
graph.
\end{corollary}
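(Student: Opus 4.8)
The plan is to obtain the corollary directly from Theorem~\ref{nullity n-4} by specializing from weighted graphs to signed graphs, namely to weighted graphs all of whose edge weights lie in $\{+1,-1\}$. For such a $\Gamma$ with unicyclic underlying graph and unique cycle $C_k$, the product $W=\prod_{i=1}^k w_i$ used in the definition of Types A--D is exactly $sgn(C_k)$, and since $\Gamma$ carries only one cycle, $\Gamma$ is balanced precisely when $sgn(C_k)=+1$ and unbalanced precisely when $sgn(C_k)=-1$. So the whole task reduces to rewriting the Type conditions occurring in Theorem~\ref{nullity n-4} — which involve only $C_4$ and $C_6$ — in terms of the sign of the cycle.

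First I would dispose of the general even-cycle case. For a signed even cycle $C_k$ the quantities $W_e=w_2w_4\cdots w_k$ and $W_o=w_1w_3\cdots w_{k-1}$ are each $\pm1$, so $W_o+(-1)^{(k-2)/2}W_e=0$ holds if and only if $W_o=-(-1)^{(k-2)/2}W_e$; multiplying through by $W_e$ and using $W_e^2=1$, this is equivalent to $W=W_oW_e=-(-1)^{(k-2)/2}=(-1)^{k/2}$. Hence a signed even cycle $C_k$ is of Type A iff $sgn(C_k)=(-1)^{k/2}$ and of Type B otherwise. Specializing to $k=4$ gives: $C_4$ is of Type A iff $sgn(C_4)=+1$ (balanced) and of Type B iff unbalanced; specializing to $k=6$ gives: $C_6$ is of Type A iff $sgn(C_6)=-1$ (unbalanced) and of Type B iff balanced.

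Then I would simply transcribe Theorem~\ref{nullity n-4} entry by entry. The item ``$C_4^w$ of Type B'' becomes ``unbalanced $C_4$''; ``$C_6^w$ of Type A'' becomes ``unbalanced $C_6$''; ``$U_4^{n-5}$ with $C_4^w$ of Type A'' becomes ``the balanced signed graph with underlying graph $U_4^{n-5}$''; and the family with underlying graph $U_1^{r,s}$ or $U_2^{p,q}$ transfers verbatim, since Theorem~\ref{nullity n-4} imposes no Type restriction there and every sign assignment on those underlying graphs is admissible. For the converse direction, each signed graph in the displayed list is in particular one of the weighted graphs listed in Theorem~\ref{nullity n-4}, hence has nullity $n-4$; this yields the claimed equivalence.

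The hard part here is only bookkeeping: the one genuinely computational point is the sign identity for even cycles above, and one must keep the parity exponents straight so that $(-1)^{k/2}$ evaluates to $+1$ at $k=4$ and to $-1$ at $k=6$, matching the balanced/unbalanced wording of \cite{fan1}. It is also worth noting that no odd-cycle Type (C or D) appears in Theorem~\ref{nullity n-4}, so $C_3$ and $C_5$ never enter the translation, and the only sign constraint that survives in the signed setting is the one on $C_4$ (either directly or inside $U_4^{n-5}$) and on $C_6$.
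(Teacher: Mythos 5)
Your proposal is correct and follows exactly the route the paper intends: the corollary is stated as an immediate specialization of Theorem~\ref{nullity n-4} to $\pm1$ weights, and your sign computation (Type A for a signed even cycle $C_k$ iff $sgn(C_k)=(-1)^{k/2}$, giving balanced $C_4$ / unbalanced $C_6$ for Type A and the reverse for Type B) is the bookkeeping the paper leaves implicit. Nothing is missing.
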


\begin{corollary} \cite{tan}
Let $U$ be an unweighted unicyclic graph of order $n$. Then
$i_0(U)=n-4$ if and only if $U$ is one of the following graphs of
order $n$: $U_1^{r,s}$, $U_2^{p,q}$, or $U_4^{n-5}$.
\end{corollary}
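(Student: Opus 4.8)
The plan is to obtain this corollary as an immediate specialization of Theorem~\ref{nullity n-4} to the all-ones weighting. An unweighted unicyclic graph $U$ of order $n$ is exactly a weighted unicyclic graph $U_w$ with $w(e)=1$ for every $e\in E(U)$, and $i_0(U)=i_0(U_w)$. Hence $i_0(U)=n-4$ forces $U_w$ to be one of the configurations listed in Theorem~\ref{nullity n-4}, and all that remains is to decide which of those configurations can actually carry the all-ones weighting.

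First I would dispose of the two ``pure cycle'' cases. For the $4$-cycle with all weights $1$ one has $W_o+(-1)^{(4-2)/2}W_e=w_1w_3-w_2w_4=1-1=0$, so $C_4$ (unweighted) is of Type~A rather than Type~B, and is therefore excluded. For the $6$-cycle with all weights $1$ one has $W_o+(-1)^{(6-2)/2}W_e=w_1w_3w_5+w_2w_4w_6=1+1=2\neq0$, so $C_6$ (unweighted) is of Type~B rather than Type~A, and is likewise excluded. (This is consistent with Lemma~\ref{cycle}, which gives these graphs nullity $2=n-2$ and $0=n-6$ respectively, neither equal to $n-4$.)

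Next I would treat the families built on $U_1^{r,s}$ and $U_2^{p,q}$: Theorem~\ref{nullity n-4} imposes no restriction on the weights for these underlying graphs, so in particular the all-ones weighting is permitted, and $U_1^{r,s}$ and $U_2^{p,q}$ themselves appear in the list. Finally, for $U_4^{n-5}$ the only condition in Theorem~\ref{nullity n-4} is that the $4$-cycle be of Type~A; but, as computed above, a $4$-cycle with unit weights is automatically of Type~A, so $U_4^{n-5}$ qualifies with no further constraint. Assembling these observations yields exactly $U_1^{r,s}$, $U_2^{p,q}$, $U_4^{n-5}$, which is the claim. There is no genuine obstacle here; the only point requiring care is to read off the Type of each unit-weight cycle correctly, i.e.\ to evaluate the sign expression $W_o+(-1)^{(k-2)/2}W_e$ from the definition.
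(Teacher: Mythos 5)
Your proposal is correct and follows exactly the route the paper intends: the corollary is stated as an immediate consequence of Theorem~\ref{nullity n-4}, obtained by specializing to the all-ones weighting and checking the Type of each unit-weight cycle (unit-weight $C_4$ is Type A, so the pure $C_4$ and $C_6$ cases drop out while $U_4^{n-5}$ survives, and $U_1^{r,s}$, $U_2^{p,q}$ carry no weight condition). Your sign computations for $W_o+(-1)^{(k-2)/2}W_e$ are accurate, so nothing further is needed.
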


\section{Weighted unicyclic graphs with rank 6}

\begin{lemma}\label{girth}
Let $G_w$ be a weighted unicyclic graph with three positive
(negative) eigenvalues and girth $k$. Then $k\leq 8$.
\end{lemma}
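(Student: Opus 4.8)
The plan is to use Theorem~\ref{thm3-3}, which gives $i_+(G_w)\ge\lceil k/2\rceil$ for a weighted unicyclic graph of girth $k$ with $3\le k\le n-2$, together with Lemma~\ref{cycle} for the case where $G_w$ itself is a cycle. Suppose $G_w$ has three positive eigenvalues, so $i_+(G_w)=3$. If $k=n$, then $G_w=C^w_n$ and by Lemma~\ref{cycle} we have $i_+(C^w_n)\in\{\frac{n-2}{2},\frac{n}{2},\frac{n+1}{2},\frac{n-1}{2}\}$; setting any of these equal to $3$ forces $n\le 8$, hence $k\le 8$. If $k=n-1$, then after deleting a pendant vertex and its neighbour (Lemma~\ref{deleting pendent vertex}), $i_+$ drops by exactly $1$, so $i_+(C^w_{n-1})=2$, which by Lemma~\ref{cycle} forces $n-1\le 6$, i.e. $k\le 6\le 8$. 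The remaining case is $3\le k\le n-2$, where Theorem~\ref{thm3-3} applies directly: $3=i_+(G_w)\ge\lceil k/2\rceil$, so $\lceil k/2\rceil\le 3$, giving $k\le 6$. In all cases $k\le 8$. The argument for three negative eigenvalues is identical, using $i_-$ in place of $i_+$ throughout (Theorem~\ref{thm3-3} and Lemma~\ref{cycle} are symmetric in $i_+$ and $i_-$, and Lemma~\ref{deleting pendent vertex} decreases $i_-$ by exactly $1$ as well).

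The key steps in order are: (i) split into the three ranges $k=n$, $k=n-1$, $3\le k\le n-2$ according to how many vertices lie off the cycle; (ii) in the first range invoke Lemma~\ref{cycle} and solve the four equations $i_+(C^w_n)=3$ for $n$; (iii) in the second range peel off the unique pendant vertex via Lemma~\ref{deleting pendent vertex} to reduce to $i_+(C^w_{n-1})=2$, then apply Lemma~\ref{cycle} again; (iv) in the third range quote Theorem~\ref{thm3-3} to bound $\lceil k/2\rceil\le 3$. The worst bound over all three cases is $k\le 8$, attained in the cycle case via Type~C or Type~D odd cycles of length $7$ (where $i_\pm=\frac{n\pm1}{2}$ can equal $3$ when $n=7$ or $n=5$) and Type~A/B even cycles up to length $8$.

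I expect essentially no serious obstacle: the proof is a short case analysis driven entirely by results already established earlier in the paper. The only point requiring mild care is the $k=n-1$ case, where one must note that a unicyclic graph of girth $n-1$ on $n$ vertices has exactly one vertex off the cycle, and that vertex is pendant, so Lemma~\ref{deleting pendent vertex} (not Lemma~\ref{pendant twin} or Lemma~\ref{matched vertex}) is the right tool and the reduction lands on a genuine weighted cycle $C^w_{n-1}$. One should also double-check that the bound from the cycle case ($n\le 8$, hence $k=n\le 8$) is indeed the binding constraint and is not improved by the other two cases — it is exactly $8$, whereas the other two cases give $k\le 6$, so the stated bound $k\le 8$ is tight and cannot be lowered.
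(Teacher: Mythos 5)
Your overall strategy works and is genuinely different from the paper's: the paper disposes of the lemma in two lines by observing that girth $k\ge 9$ forces an induced $P_8$ (eight consecutive cycle vertices induce a path, since a unicyclic graph has no chords), whence $i_+(G_w)\ge i_+(P_8^w)=m(P_8)=4$ by Lemmas \ref{weighted tree inertia} and \ref{induced subgraph}, a contradiction. Your three-way split into $k=n$, $k=n-1$, $3\le k\le n-2$ is exhaustive, and the first and third cases are handled correctly; the trade-off is that you lean on the heavier Theorem \ref{thm3-3}, while the paper needs only induced-subgraph monotonicity and the inertia of a weighted path.

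However, the $k=n-1$ case as written contains a genuine error. Lemma \ref{deleting pendent vertex} deletes the pendant vertex $v$ \emph{and} its neighbour $u$; since $u$ lies on the cycle, $G_w-u-v$ is the path $P^w_{n-2}$, not the cycle $C^w_{n-1}$. So your assertion that ``the reduction lands on a genuine weighted cycle $C^w_{n-1}$'' and the ensuing appeal to Lemma \ref{cycle} with $i_+(C^w_{n-1})=2$ are unjustified. The case is easily repaired: $i_+(G_w)=1+i_+(P^w_{n-2})=1+\lfloor (n-2)/2\rfloor$, and setting this equal to $3$ gives $n\in\{6,7\}$, hence $k=n-1\le 6$ --- coincidentally the same numerical bound you claimed, so the final conclusion $k\le 8$ survives. (Alternatively, this case and indeed the whole lemma follow at once from the paper's induced-$P_8$ argument.)
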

\begin{proof}
If $k\geq 9$, then $G_w$ must contain $P^w_8$ as an induced
subgraph. By Lemma \ref{induced subgraph}, $i_+(G_w)\geq 4$
$(i_-(G_w)\geq 4)$ which is a contradiction.
\end{proof}

In what follows we shall characterize the weighted unicyclic graphs
with three positive eigenvalues. Let $\mathcal {U}^*$ be the set of
weighted unicyclic graphs without pendant twins. By Lemma
\ref{pendant twin}, it suffices to characterize the weighted
unicyclic graphs among all graphs in $\mathcal {U}^*$. Let $G_1$
(resp. $G_2$) be the unweighted graph obtained from $C_8$ (resp.
$C_7$) by attaching a pendant edge on a vertex of $C_8$ (resp.
$C_7$).
\begin{theorem}\label{eight}
Let $G_w\in \mathcal {U}^*$ be a weighted unicyclic graph with girth
$k$. Then
\begin{enumerate}[(1).]
\item If $k=7$, $i_+(G_w)=3$ if and only if $G_w$ is $C_7^w$ of Type
$D$.
\item  If $k=8$, $i_+(G_w)=3$ if and only if $G_w$ is  $C_8^w$ of Type
$A$.
\end{enumerate}
\end{theorem}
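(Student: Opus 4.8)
The plan is to prove both parts by reducing to a cycle via the structural lemmas already established, exploiting that $G_w$ has no pendant twins and that $k \in \{7,8\}$ forces the attached tree to be essentially trivial.

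\medskip
\noindent
\textbf{Setup and strategy.} First I would record that by Lemma~\ref{girth} the girth of any weighted unicyclic graph with $i_+=3$ is at most $8$, so cases $k=7$ and $k=8$ are the only ones not covered by Theorems~\ref{positive 2}. The sufficiency direction in both parts is immediate from Lemma~\ref{cycle}: a $C_7^w$ of Type $D$ has $i_+ = \frac{7-1}{2} = 3$, and a $C_8^w$ of Type $A$ has $i_+ = \frac{8-2}{2} = 3$. So the work is the necessity direction. Assume $G_w \in \mathcal{U}^*$ has girth $k \in \{7,8\}$ and $i_+(G_w) = 3$; I want to show $G_w$ is the bare cycle of the prescribed type. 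The key observation is that if $G_w$ is not a cycle, it contains $U_{k+1,k}$ as an induced subgraph (a cycle of length $k$ with one pendant edge), hence — by Lemma~\ref{induced subgraph} and Lemma~\ref{deleting pendent vertex} applied to that pendant edge — $i_+(G_w) \ge i_+(U_{k+1,k}) = i_+(P_{k-1}) + 1 = \lceil \frac{k}{2}\rceil$. For $k=7$ this gives $i_+ \ge 4$ and for $k=8$ it gives $i_+ \ge 4$, each contradicting $i_+(G_w)=3$. Wait — that already shows $G_w$ must be exactly $C_k^w$, no extra vertices at all, which is why the hypothesis $G_w \in \mathcal{U}^*$ and the specific graphs $G_1, G_2$ from the preceding paragraph are red herrings for this particular theorem (they matter for the companion result on $k \le 6$ or for the rank-$6$ classification that follows).

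\medskip
\noindent
\textbf{Carrying it out.} So the proof reduces to: (i) show $G_w = C_k^w$, and (ii) determine which type. For (i), suppose $G_w$ has a vertex $w$ outside the unique cycle $C_k^w$; since $G_w$ is connected there is such a $w$ adjacent to a cycle vertex or to a vertex on a path to the cycle, and in either case $G_w$ contains an induced $U_{k+1,k}$. Then the displayed inequality $i_+(G_w) \ge \lceil k/2 \rceil \ge 4$ contradicts the hypothesis. Hence $V(G_w) = V(C_k^w)$ and $G_w = C_k^w$. For (ii), invoke Lemma~\ref{cycle} directly: among the four types, a weighted cycle of odd order $7$ has $i_+ \in \{3,4\}$, with $i_+ = 3$ precisely for Type $D$ and $i_+=4$ for Type $C$; a weighted cycle of even order $8$ has $i_+ \in \{3,4\}$, with $i_+=3$ precisely for Type $A$ and $i_+=4$ for Type $B$. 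This pins down the type in each case and completes the necessity direction.

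\medskip
\noindent
\textbf{Main obstacle.} Honestly there is no deep obstacle here; the result is a short corollary of Lemma~\ref{cycle}, Lemma~\ref{induced subgraph}, and Lemma~\ref{deleting pendent vertex}. The only point requiring a little care is the claim that a non-cycle unicyclic graph of girth $k$ always contains $U_{k+1,k}$ as an \emph{induced} subgraph: one must check that the pendant vertex picked is genuinely pendant in the induced subgraph, i.e. choose a vertex at maximum distance from the cycle so that deleting everything else leaves a clean pendant edge attached to the cycle. Once that is noted, the inequality $i_+(G_w) \ge \lceil k/2\rceil$ follows from Theorem~\ref{thm3-3} itself (which is exactly this argument), so one can simply cite Theorem~\ref{thm3-3}: for $k=7$ or $k=8$ it gives $i_+(G_w) \ge 4$ unless $G_w$ has girth equal to its order, forcing $G_w = C_k^w$. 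The rest is bookkeeping against the four-way split in Lemma~\ref{cycle}.
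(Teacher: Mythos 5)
Your proposal is correct and follows essentially the same route as the paper: sufficiency from Lemma \ref{cycle}, and necessity by observing that any non-cycle graph of girth $7$ or $8$ contains the cycle with one pendant edge (your $U_{k+1,k}$, the paper's $G_2$ and $G_1$) as an induced subgraph, whence $i_+\geq 1+i_+(P_{k-1})=4$, a contradiction. The only nitpick is in your side remark: the vertex to adjoin should be one at distance exactly $1$ from the cycle (not at maximum distance), but this does not affect the argument.
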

\begin{proof}
The sufficiency can be  easily verified by Lemma \ref{cycle}. Next we
consider the necessity.

 If $G_w$ is a cycle, by Lemma \ref{cycle}
$G_w$ is $C_8^w$ of Type $A$, or $C_7^w$ of Type $D$. Assume that
the underlying graph of $G_w$ contains $G_1$ or $G_2$ as an induced
subgraph. By Lemma \ref{deleting pendent vertex}, we have
$i_+(G_w)\geq i_+(G_1)=1+i_+(P_7)=4$ and $i_+(G_w)\geq
i_+(G_2)=1+i_+(P_6)=4$ which contradicts the fact that $i_+(G_w)=3$.
This completes the proof.
\end{proof}

\begin{figure}[ht]
\center
\includegraphics [width = 8cm]{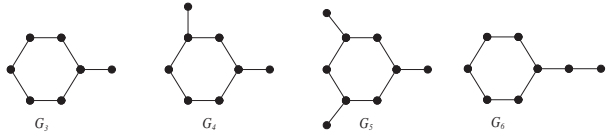}
\caption { \textit{ Four unweighted graphs in Theorem 4.3} }
 \end{figure}

\begin{theorem}\label{six}
Let $G_w\in \mathcal {U}^*$ be a weighted unicyclic graph with girth
$6$. Then $i_+(G_w)=3$ if and only if $G_w$ is one of the following
graphs: the weighted graphs with $G_3$, $G_4$ or $G_5$ (as depicted
in Fig. 2) as the underlying graph; the weighted graphs with $G_6$
(as depicted in Fig. 2) as the underlying graph in which the cycle
$C_6^w$ is of Type $A$.
\end{theorem}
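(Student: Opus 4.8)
The plan is to follow the same template used in Theorems~\ref{positive 2} and \ref{eight}: reduce to graphs in $\mathcal{U}^*$ with girth $6$, peel off pendant vertices with Lemma~\ref{deleting pendent vertex}, and use induced-subgraph monotonicity (Lemma~\ref{induced subgraph}) to forbid large configurations. The sufficiency direction is routine: for each of $G_3,G_4,G_5$ one deletes a pendant vertex together with its neighbour (each drop reduces $i_+$ by exactly $1$ by Lemma~\ref{deleting pendent vertex}) until one is left with a short weighted path or a small weighted tree/cycle whose positive index is computed from Example~1, Lemma~\ref{weighted tree inertia}, and Lemma~\ref{cycle}; for $G_6$ with $C_6^w$ of Type~A one uses Lemma~\ref{cycle} (which gives $i_+(C_6^w)=2$ in Type~A) together with Lemma~\ref{deleting pendent vertex} or Lemma~\ref{inertia of unicyclic graph}(2).

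For the necessity, assume $G_w\in\mathcal{U}^*$ has girth $6$ and $i_+(G_w)=3$. First I would bound the size: since $G_w$ has girth $6$, deleting any pendant edge and applying Lemma~\ref{deleting pendent vertex} together with Lemma~\ref{induced subgraph} rules out $G_1,G_2$-type configurations and, more generally, any tree hanging off $C_6$ that is too large; in particular $G_w$ cannot contain $P_8$ as an induced subgraph. I would then enumerate, up to the no-pendant-twins restriction, the unicyclic graphs on $C_6$ whose positive index can still be $3$: attaching trees of total matching number at most $1$ to the vertices of $C_6$. Concretely, split into the two cases of Lemma~\ref{inertia of unicyclic graph}. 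If some $v\in V(C_6^w)$ is saturated in $G_w\{v\}$, then $i_+(G_w)=i_+(G_w\{v\})+i_+(G_w-G_w\{v\})=m(G_w\{v\})+m(G_w-G_w\{v\})$; since $G_w-G_w\{v\}$ contains $P_5$, its matching number is at least $2$, forcing $m(G_w\{v\})=1$ and $m(G_w-G_w\{v\})=2$, and one checks these force the underlying graph to be one of $G_3,G_4,G_5$ (the three ways, up to pendant twins, of hanging a single star and leaving a perfect-matching-on-$P_5$-compatible structure on $C_6$). If no vertex of $C_6^w$ is saturated in its tree, then $i_+(G_w)=i_+(C_6^w)+m(G_w-C_6^w)$, so either $C_6^w$ is of Type~A with $m(G_w-C_6^w)=1$ (giving $G_6$ with Type~A cycle, since every off-cycle component must be a star but cannot be a single pendant adjacent to $C_6$, lest that cycle vertex be saturated) or $C_6^w$ is of Type~B with $m(G_w-C_6^w)=0$, which is impossible for the same saturation reason; the Type~C/D cases cannot occur since $C_6$ is even.

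The main obstacle I anticipate is the bookkeeping in the saturated case: showing that $m(G_w\{v\})=1$ and $m(G_w-G_w\{v\})=2$ together with girth $6$ and the absence of pendant twins pin down exactly the three graphs $G_3,G_4,G_5$ and nothing else. This requires a careful case analysis of where on $C_6$ the single star $G_w\{v\}$ is rooted and which additional pendant edges may be attached to the other cycle vertices without raising the matching number of $G_w-G_w\{v\}$ above $2$ or creating pendant twins; the even girth is what makes $i_+(C_6^w)$ depend on the Type and hence makes the Type~A distinction for $G_6$ necessary. Once the configurations are listed, confirming $i_+=3$ for each is immediate from the sufficiency computation, so the enumeration step is where the real work lies.
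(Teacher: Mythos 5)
Your argument is sound and it reaches the stated classification, but it takes a genuinely different route from the paper. The paper proves necessity by exhibiting five forbidden configurations $G_7,\dots,G_{11}$ (each verified to have $i_+\geq 4$ via Lemmas~\ref{deleting pendent vertex} and~\ref{cycle}) and then enumerating the possible shapes of $G_w-C_6^w$ directly, discarding anything that contains one of the forbidden graphs as an induced subgraph. You instead run the saturated/unsaturated dichotomy of Lemma~\ref{inertia of unicyclic graph}, in the style of the proof of Theorem~\ref{extremal graphs1}: in the saturated case $3=m(G_w\{v\})+m(G_w-G_w\{v\})$ with $m(G_w-G_w\{v\})\geq 2$ because the rest of the cycle is a $P_5$, which pins down $m(G_w\{v\})=1$ and $m(G_w-G_w\{v\})=2$; in the unsaturated case $3=i_+(C_6^w)+m(G_w-C_6^w)$. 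This buys you something concrete: you never need to compute $i_+$ for the excluded graphs of Fig.~3, since the condition $m(G_w-G_w\{v\})=2$ automatically rejects them (a pendant at a vertex adjacent to, or antipodal to, $v_1$ pushes the matching number of $P_5$ plus pendants to $3$), and the no-pendant-twins hypothesis kills all stars $K_{1,t}$ with $t\geq 2$. The bookkeeping you flag as the main obstacle does close: the admissible pendant positions on the residual $P_5$ are exactly $\emptyset$, $\{v_3\}$ (or $\{v_5\}$), and $\{v_3,v_5\}$, yielding $G_3$, $G_4$, $G_5$ and nothing else. The paper's approach is more mechanical but self-contained at the level of small examples; yours is shorter and explains structurally \emph{why} those three graphs and no others survive.

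One concrete defect, which you share with the paper's own proof: in your unsaturated case you dismiss ``Type B with $m(G_w-C_6^w)=0$'' as impossible ``for the same saturation reason,'' but that reason (an isolated vertex of $G_w-C_6^w$ is a pendant adjacent to the cycle, saturating its neighbour) only applies when $G_w-C_6^w$ is nonempty. If $G_w=C_6^w$ itself, then $G_w\in\mathcal U^*$, every cycle vertex is vacuously unsaturated, and Lemma~\ref{cycle} gives $i_+(C_6^w)=3$ precisely when $C_6^w$ is of Type~B, so the bare cycle of Type~B satisfies $i_+=3$ and is absent from the stated list. This is an omission in the theorem as printed (compare Theorem~\ref{eight}, where the bare cycles \emph{are} listed), so it does not reflect badly on your reading of the problem, but your phrase ``impossible for the same saturation reason'' is the one step of your argument that is literally false as written; it should say ``impossible unless $G_w=C_6^w$.''
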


\begin{figure}[ht]
\center
\includegraphics [width = 6cm]{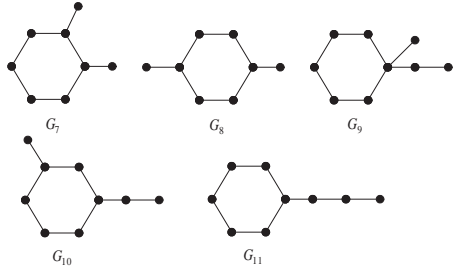}
\caption { \textit{ Five unweighted graphs excluded by $i_+(G_w)=3$}
}
 \end{figure}
\begin{proof}
The sufficiency can be easily verified by Lemmas \ref{deleting pendent
vertex} and \ref{cycle}. Next we consider the necessity.

By Lemmas \ref{deleting pendent vertex} and \ref{cycle}, the graphs
with one of $G_i$'s ($i=7,8,\cdots,11$) (as depicted in Fig. 3) as
the underlying graph have four positive eigenvalues. Let $G_w\in
\mathcal {U}^*$ be a weighted graph with girth $6$ and positive
index 3.

\textbf{Case 1.} $G_w-C_6^w$ is a set of isolated vertices.

If the order of the graph $G_w-C_6^w$ is 1, $G_w$ is the graph with
$G_3$ as the underlying graph.

If the order of the graph $G_w-C_6^w$ is 2, $G_w$ is the graph with
$G_4$ as the underlying graph. The graph with $G_7$ or $G_8$ as the
underlying graph has four positive eigenvalues.

If the order of the graph $G_w-C_6^w$ is at least 3, $G_w$ is the
graph with $G_5$ as the underlying graph. Moreover, other graphs
must contain $G_7$ as an induced subgraph and have more than three
positive eigenvalues.

\textbf{Case 2.} $G_w-C_6^w$ contains $P_2$ as an induced subgraph.

Then $G_w$ is the graph with $G_6$ as the underlying graph in which
the cycle $C_6^w$ is of Type $A$. Any other graph has more than
three positive eigenvalues since its underlying graph contain one of
$G_i$'s ($i=7,8,\cdots,11$) as an induced subgraph.
\end{proof}

Similar to Theorem \ref{six}, we have
\begin{theorem}\label{five}
Let $G_w\in \mathcal {U}^*$ be a weighted unicyclic graph with girth
$5$. Then $i_+(G_w)=3$ if and only if $G_w$ is one of the following
graphs: the weighted graph with one of $G_{i}$'s $(i=12,\cdots,15)$
(as depicted in Fig. 4) as the underlying graph; the weighted graph
with $G_{16}$ (as depicted in Fig. 4) as the underlying graph in
which the cycle $C_5^w$ is of Type $D$.
\end{theorem}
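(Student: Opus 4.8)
The plan is to follow the scheme of the proof of Theorem~\ref{six}. Throughout I work in the class $\mathcal{U}^*$, which by Lemma~\ref{pendant twin} costs no generality, and I split according to whether $G_w-C_5^w$ is edgeless. The only tools needed are the cycle inertia of Lemma~\ref{cycle} (for $C_5^w$: $i_+=3$ in Type~C, $i_+=2$ in Type~D), the pendant-deletion identity of Lemma~\ref{deleting pendent vertex}, the monotonicity of Lemma~\ref{induced subgraph}, and the fact that the positive index of a weighted forest equals its matching number (Corollary~\ref{weighted forest inertia}).

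For sufficiency I would treat $G_{12},\dots,G_{15}$ by peeling the pendant vertices off the $5$-cycle one at a time: each application of Lemma~\ref{deleting pendent vertex} deletes a pendant vertex together with its (cycle) neighbour, lowers $i_+$ by exactly $1$, and leaves a weighted forest, so after exhausting the pendants a single matching-number count on each of the four resulting small trees gives $i_+=3$, with no dependence on the weights. For $G_{16}$ I would delete the pendant endpoint of the attached $P_2$ together with its neighbour, which leaves precisely $C_5^w$; hence $i_+(G_w)=i_+(C_5^w)+1$, and by Lemma~\ref{cycle} this equals $3$ exactly when $C_5^w$ is of Type~D, as asserted.

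For necessity, mirroring the role of $G_7,\dots,G_{11}$ in Theorem~\ref{six}, I would first assemble a short list of unweighted unicyclic graphs of girth $5$ --- a $5$-cycle with pendant vertices at three consecutive cycle vertices, a $5$-cycle with an attached path $P_3$, a $5$-cycle carrying an attached $P_2$ together with one further pendant vertex, and a few similar small enlargements --- and check via Lemma~\ref{deleting pendent vertex} and Corollary~\ref{weighted forest inertia} that each has positive index at least $4$ under every weighting; by Lemma~\ref{induced subgraph} none of them can occur as an induced subgraph of a $G_w$ with $i_+(G_w)=3$. Now take $G_w\in\mathcal{U}^*$ of girth $5$ with $i_+(G_w)=3$. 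If $G_w-C_5^w$ is a set of isolated vertices, every off-cycle vertex is a pendant vertex attached to a cycle vertex, and, there being no pendant twins, to distinct cycle vertices; excluding the crowded configurations by the forbidden list (and noting that any four pendant vertices on $C_5$ already include three on consecutive cycle vertices) leaves exactly the four graphs $G_{12}$ (one pendant vertex), $G_{13}$ and $G_{14}$ (two pendant vertices, at adjacent resp.\ non-adjacent cycle vertices), and $G_{15}$ (the unique admissible three-pendant pattern). If instead $G_w-C_5^w$ contains an edge, then $G_w$ contains the underlying graph of $G_{16}$ as an induced subgraph; deleting from such an induced copy the pendant endpoint of its $P_2$ together with that endpoint's neighbour leaves $C_5^w$, so $i_+(C_5^w)+1\le i_+(G_w)=3$, forcing $i_+(C_5^w)=2$, i.e.\ $C_5^w$ of Type~D; and since any proper enlargement of $G_{16}$ within $\mathcal{U}^*$ of girth $5$ contains one of the forbidden graphs, the underlying graph of $G_w$ is exactly $G_{16}$.

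The only genuine work is the bookkeeping inside these two subcases: the forbidden list must be simultaneously complete (so that every configuration of pendant vertices or pendant paths hung on $C_5$ outside $G_{12},\dots,G_{16}$ really does reach $i_+\ge 4$) and short enough that the case check stays manageable, which comes down to the matching-number arithmetic to which Lemma~\ref{deleting pendent vertex} reduces each case. The Type-D/Type-C distinction is confined to $G_{16}$ and is immediate from Lemma~\ref{cycle}; and, as in Theorem~\ref{six}, the bare cycle $C_5^w$ lies outside the statement and is dealt with separately (Lemma~\ref{cycle} gives $i_+=3$ for it precisely when it is of Type~C).
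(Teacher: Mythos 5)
The paper gives no independent proof of this theorem (it is stated with ``Similar to Theorem~\ref{six}''), and your proposal is exactly the intended adaptation of that argument --- a short list of forbidden induced subgraphs of girth $5$ with $i_+\geq 4$, followed by the case split on whether $G_w-C_5^w$ is edgeless, with Lemmas~\ref{deleting pendent vertex}, \ref{induced subgraph}, \ref{cycle} and Corollary~\ref{weighted forest inertia} doing the work --- so it is correct and essentially the paper's approach. Your closing observation that the bare cycle $C_5^w$ of Type C belongs to $\mathcal{U}^*$, has $i_+=3$, and yet is absent from the statement correctly flags a small imprecision in the theorem as written (the same one already present in Theorem~\ref{six}), not a defect of your argument.
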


\begin{figure}[ht]
\center
\includegraphics [width = 10cm]{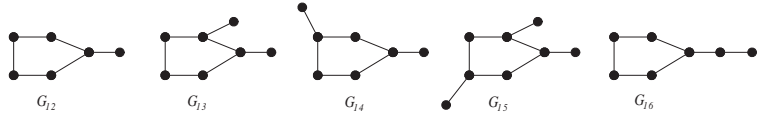}
\caption { \textit{ Five unweighted graphs in Theorem 4.4} }
 \end{figure}

\begin{theorem}\label{four}
Let $G_w\in \mathcal {U}^*$ be a weighted unicyclic graph with girth
$4$. Then $i_+(G_w)=3$ if and only if $G_w$ is one of the following
graphs: the weighted graphs with one of $G_i$'s ($i=18,
19,\cdots,28$) (as depicted in Fig. 5) as the underlying graph; the
weighted graphs with $G_{17}$ (as depicted in Fig. 5) as the
underlying graph in which the cycle $C_4^w$ is of Type $B$; the
weighted graphs with one of $G_i$'s ($i=29,30,\cdots,34$) (as
depicted in Fig. 5) as the underlying graph in which the cycle
$C_4^w$ is of Type $A$.
\end{theorem}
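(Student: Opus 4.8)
The plan is to follow the template of the proofs of Theorems~\ref{six} and~\ref{five}: the sufficiency is the routine direction, and the necessity is a systematic case analysis powered by Lemmas~\ref{deleting pendent vertex}, \ref{cycle} and~\ref{inertia of unicyclic graph}. For the sufficiency I would take each graph in the list, repeatedly delete pendant paths via Lemma~\ref{deleting pendent vertex} until only a weighted $C_4$ (or a small weighted tree together with a weighted $C_4$) remains, and then read off $i_+$ from Lemma~\ref{cycle}; for the graphs carrying a Type~A or Type~B hypothesis it is exactly this hypothesis that pins $i_+$ at $3$ rather than $2$ or $4$.

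For the necessity, let $G_w\in\mathcal U^*$ have girth $4$, unique weighted cycle $C_4^w$, and $i_+(G_w)=3$. First I would assemble, using Lemmas~\ref{deleting pendent vertex} and~\ref{cycle}, a finite list of unweighted unicyclic graphs of girth $4$ that satisfy $i_+\ge4$ for \emph{every} weighting --- for example $C_4$ with a pendant path on five vertices, together with the similar graphs that reduce to $C_4$ or to a shorter path after enough applications of Lemma~\ref{deleting pendent vertex}. By Lemma~\ref{induced subgraph}, none of these may occur as an induced subgraph of $G_w$, which caps both the order and the amount of branching of the forest $G_w-C_4^w$.

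Next I would split along the dichotomy of Lemma~\ref{inertia of unicyclic graph}. If some $v\in V(C_4^w)$ is saturated in $G_w\{v\}$, then $i_+(G_w)=m(G_w\{v\})+m(G_w-G_w\{v\})=3$; here $m(G_w-G_w\{v\})\ge1$ since its underlying graph is a tree containing $P_3$, and $m(G_w\{v\})\ge1$ since $v$ is saturated, so the matching numbers split as $1+2$ or $2+1$. Feeding this into the no--pendant--twins condition and the excluded subgraphs forces $G_w\{v\}$ and $G_w-G_w\{v\}$ into a short list of stars, short paths and a few small trees, which assemble into exactly the eleven families $G_{18},\dots,G_{28}$, where no constraint on the weights of $C_4^w$ is needed because both summands are matching numbers of forests. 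If instead no vertex of $C_4^w$ is saturated in its tree, then $i_+(G_w)=i_+(C_4^w)+m(G_w-C_4^w)$, and $i_+(C_4^w)$ equals $1$ for Type~A and $2$ for Type~B by Lemma~\ref{cycle}; Type~B forces $m(G_w-C_4^w)=1$, so $G_w-C_4^w$ is a star joined to $C_4^w$ by a single edge, which is $G_{17}$, while Type~A forces $m(G_w-C_4^w)=2$ and, after once more discarding the forbidden subgraphs and using that every cycle vertex is unsaturated in its tree, yields precisely $G_{29},\dots,G_{34}$. Lemma~\ref{pendant twin} is what makes it legitimate to carry out the whole argument inside $\mathcal U^*$.

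The main obstacle is the sheer amount of bookkeeping, not any single hard idea: because $C_4$ is the smallest cycle of nullity two, a great many inequivalent tree attachments keep $i_+$ equal to $3$, so the admissible list runs all the way from $G_{17}$ to $G_{34}$ and completeness has to be checked configuration by configuration. The most delicate point is the Type~A subcase, where the two--dimensional kernel of $C_4^w$ has to be reconciled with the matching numbers of the attached trees, and one must simultaneously keep track of which vertices of $C_4^w$ carry trees (adjacent versus opposite positions) while respecting the prohibition on pendant twins.
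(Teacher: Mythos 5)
Your proposal is correct in substance, but it organizes the necessity argument differently from the paper. The paper's proof fixes the forest $G^\diamondsuit$ (the underlying graph of $G_w-C_4^w$) as the primary object and runs five cases according to the longest path $P_2,\dots,P_5$ it contains, subdividing by how many isolated vertices accompany that path, and killing the overflow configurations by exhibiting the sixteen forbidden induced subgraphs $G_{35},\dots,G_{50}$ with $i_+\geq 4$. You instead take the saturation dichotomy of Lemma~\ref{inertia of unicyclic graph} as the primary case split --- the same device the paper uses for Theorems~\ref{extremal graphs1}, \ref{rank 2 3} and~\ref{rank 5}, but not in its proof of this theorem. Your route buys a structural explanation that the paper leaves implicit: in the saturated case $i_+(G_w)=m(G_w\{v\})+m(G_w-G_w\{v\})$ is a sum of matching numbers of forests, hence weight-independent, which is exactly why $G_{18},\dots,G_{28}$ carry no condition on $C_4^w$; in the unsaturated case $i_+(G_w)=i_+(C_4^w)+m(G_w-C_4^w)$, and the split $i_+(C_4^w)\in\{1,2\}$ for Types A and B is precisely what separates $G_{29},\dots,G_{34}$ from $G_{17}$. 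The paper's organization, by contrast, makes the exhaustiveness of the enumeration easier to audit, since every girth-$4$ member of $\mathcal U^*$ is classified by the longest path in $G^\diamondsuit$. Both approaches ultimately rest on the same finite bookkeeping with Lemmas~\ref{deleting pendent vertex}, \ref{cycle} and~\ref{induced subgraph}, and you correctly flag that this bookkeeping (in particular reconciling the Type~A condition with the placement of the attached trees and the no-pendant-twins restriction) is where all the remaining work lies.
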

\begin{figure}[ht]
\center
\includegraphics [width = 10cm]{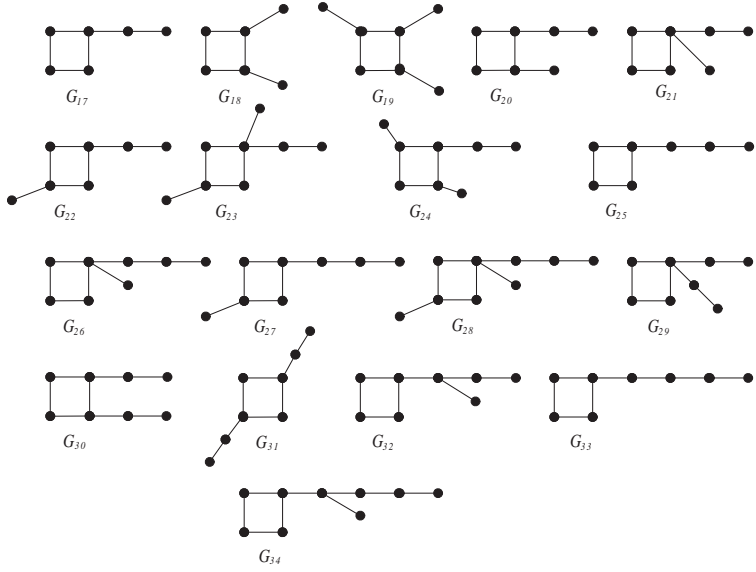}
\caption { \textit{ Eighteen unweighted graphs in Theorem 4.5} }
 \end{figure}

\begin{figure}[ht]
\center
\includegraphics [width = 10cm]{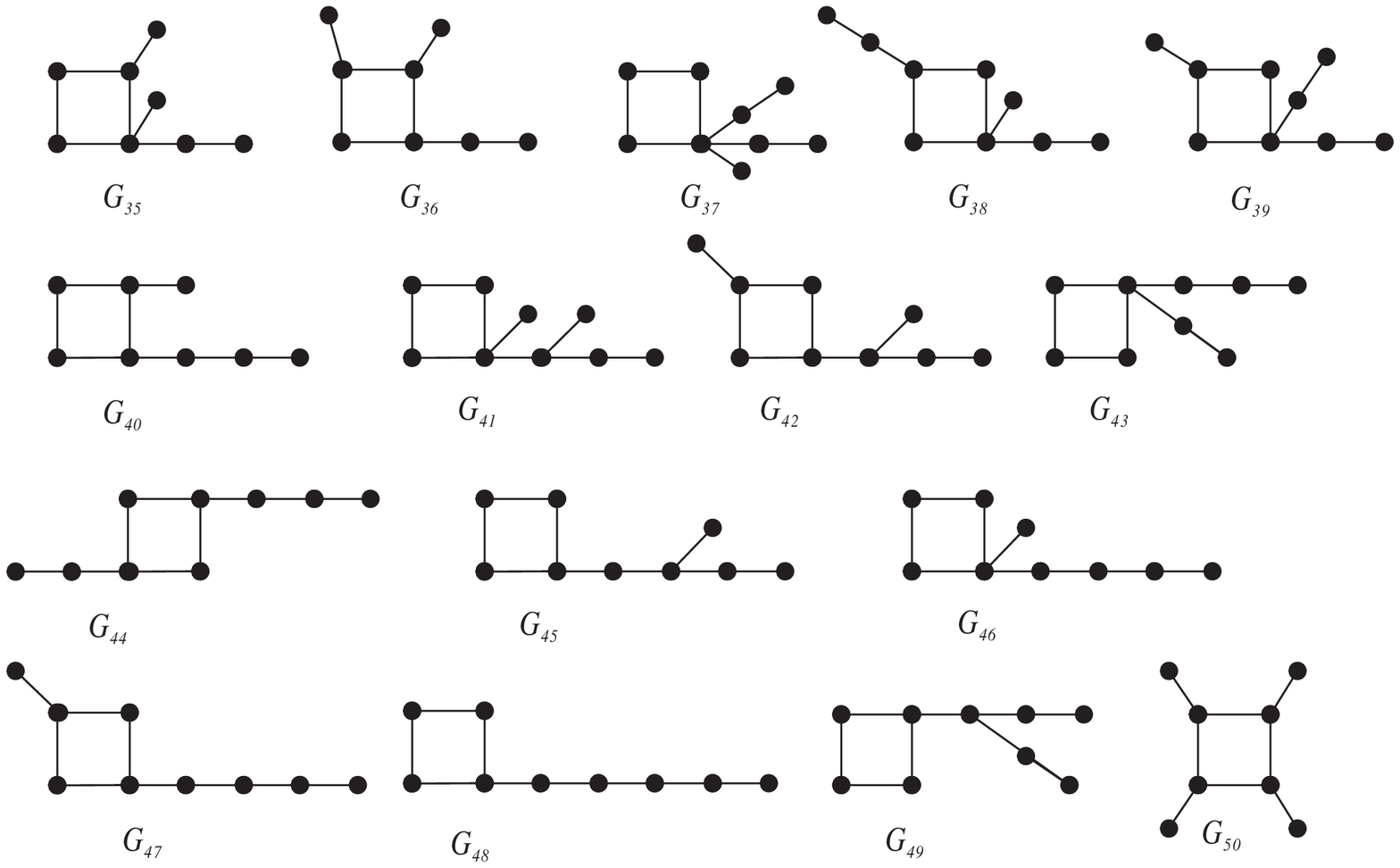}
\caption { \textit{ Sixteen unweighted graphs excluded by
$i_+(G_w)=3$} }
 \end{figure}

 \begin{proof}
The sufficiency is easily verified by Lemmas \ref{deleting pendent
vertex} and \ref{cycle}. Next we consider the necessity.

By Lemmas \ref{deleting pendent vertex} and \ref{cycle}, the graphs
with one of $G_i$'s ($i=35,36,\cdots,50$) (as depicted in Fig. 6) as
the underlying graph have four positive eigenvalues. Let $G_w\in
\mathcal {U}^*$ be a weighted graph with girth $4$ and positive
index 3. For convenience, denote by $G^\diamondsuit$
 the underlying graph of
$G_w-C_4^w$.

\textbf{Case 1.} $G^\diamondsuit$ is a set of isolated vertices.

$G_w$ is the weighted graph with $G_{18}$, or $G_{19}$ as the
underlying graph.

\textbf{Case 2.} $G^\diamondsuit$ contains $P_2$, but  no
$P_3$, as an induced subgraph .

If $G^\diamondsuit=P_2$, $G_w$ is the weighted graph with $G_{17}$
as the underlying graph in which the cycle $C_4^w$ is of Type $B$.

If $G^\diamondsuit$ is the union of an isolated vertex and $P_2$,
$G_w$ is the weighted graph with $G_{20}$, $G_{21}$ or $G_{22}$ as
the underlying graph.

If $G^\diamondsuit$ is the union of two isolated vertices and $P_2$,
 $G_w$ is the weighted graph
with $G_{23}$ or $G_{24}$ as its underlying graph.

If $G^\diamondsuit$ is the union of more than two isolated vertices
and $P_2$, any weighted graph has more than three positive
eigenvalues since its underlying graph contains $G_{35}$ or $G_{36}$
as an induced subgraph.

If $G^\diamondsuit$ is two copies of $P_2$, $G_w$ is the weighted
graph with $G_{29}$, $G_{30}$ or $G_{31}$ as the underlying graph in
which the cycle $C_4^w$ is of Type $A$.

If $G^\diamondsuit$ is the union of some isolated vertices and two
$P_2$'s, any weighted graph has more than three positive eigenvalues
since its underlying graph contains one of $G_i$'s
($i=35,36,\cdots,39$) as an induced subgraph.

If $G^\diamondsuit$ contains three $P_2$'s as its induced subgraph,
 any weighted graph has more than
three positive eigenvalues since its underlying graph contains one
of $G_i$'s ($i=35,36,37$) as an induced subgraph.

\textbf{Case 3.} $G^\diamondsuit$ contains $P_3$, but no
$P_4$, as an induced subgraph.

If $G^\diamondsuit=P_3$, $G_w$ is the weighted graph with $G_{25}$
as its underlying graph.

If $G^\diamondsuit$ is the union of one isolated vertex and $P_3$,
$G_w$ is one of the following graphs: the weighted graphs with
$G_{26}$ or $G_{27}$ as the underlying graph.

If $G^\diamondsuit$ is the union of two isolated vertices and $P_3$,
$G_w$ is the weighted graph with $G_{28}$ as the underlying graph.

If $G^\diamondsuit$ is the union of more than two isolated vertices
and $P_3$, any graph has more than three positive eigenvalues since
it contains $G_{40}$, $G_{41}$ or $G_{42}$ as an induced subgraph.

If $G^\diamondsuit$ contains the union of $P_2$ and $P_3$ as its
induced subgraph, any graph has more than three positive eigenvalues
since it contains $G_{40}$, $G_{43}$ or $G_{44}$ as an induced
subgraph.

\textbf{Case 4.} $G^\diamondsuit$ contains $P_4$, but no
$P_5$, as an induced subgraph.

$G_w$ is the weighted graph with $G_{32}$, or $G_{33}$ as the
underlying graph in which the cycle $C_4^w$ is of Type $A$. All
other graphs have more than three positive eigenvalues since their
underlying graphs contain one of $G_{i}$'s ($i=40,41,\cdots,47$) as
an induced subgraph.

\textbf{Case 5.} $G^\diamondsuit$ contains $P_5$ as an induced
subgraph.

$G_w$ is the weighted graph with $G_{34}$ as the underlying graph in
which the cycle $C_4^w$ is of Type $A$. All other graphs have more
than three positive eigenvalues since their underlying graph contain
one of $G_{i}$'s ($i=40,41,\cdots,49$) as an induced subgraph.
\end{proof}

Similar to Theorem \ref{four}, we have
 \begin{theorem}\label{three}
Let $G_w\in \mathcal {U}^*$ be a weighted unicyclic graph with girth
$3$. Then $i_+(G_w)=3$ if and only if $G_w$ is one of the following
graphs: the weighted graphs with one of $G_i$'s
($i=52,53,\cdots,60$) (as depicted in Fig. 7) as the underlying
graph; the weighted graphs with $G_{51}$ (as depicted in Fig. 7) as
the underlying graph in which the cycle $C_3^w$ is of Type $C$; the
weighted graphs with one of $G_{i}$'s ($i=61,62,\cdots,65$) (as
depicted in Fig. 7) as the underlying graph in which the cycle
$C_3^w$ is of Type $D$.
\end{theorem}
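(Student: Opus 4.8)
The plan is to mirror exactly the structure of the proof of Theorem \ref{four}, replacing the role of $C_4^w$ by $C_3^w$ and adjusting the type conditions. First I would dispose of the sufficiency by direct verification: for each listed underlying graph, repeatedly apply Lemma \ref{deleting pendent vertex} to strip off pendant vertices until what remains is either $C_3^w$ (a cycle) or a small weighted forest, and then read off $i_+$ using Lemma \ref{cycle}, Lemma \ref{weighted tree inertia}, and the fact that $C_3^w$ of Type $C$ contributes $i_+ = 2$ while Type $D$ contributes $i_+ = 1$ (and Type $C$/$D$ are the only options since $3$ is odd). Since $C_3$ has girth $3 < n-2$ whenever the graph has pendant vertices, the graphs with pendant twins are excluded by Lemma \ref{pendant twin}, justifying the restriction to $\mathcal{U}^*$.

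For the necessity, let $G_w \in \mathcal{U}^*$ have girth $3$ and $i_+(G_w) = 3$. As in Theorem \ref{four}, the first step is to list the ``forbidden'' unweighted graphs — those unicyclic graphs with girth $3$, no pendant twins, and $i_+ \geq 4$ — which play the role of $G_{35},\dots,G_{50}$ there; each is certified to have $i_+ \geq 4$ by Lemma \ref{deleting pendent vertex} together with $i_+(P_6) = i_+(P_7) = 3$ and $i_+(C_3^w) \leq 2$. Then set $G^\diamondsuit$ to be the underlying graph of $G_w - C_3^w$ and run a case analysis on the longest induced path $P_j$ inside $G^\diamondsuit$ (the cases $G^\diamondsuit$ a set of isolated vertices, $G^\diamondsuit \supseteq P_2$ but $\not\supseteq P_3$, etc.). Within each case one enumerates, up to the pendant-twin reduction, the finitely many attachment patterns of the forest $G^\diamondsuit$ onto the triangle; each candidate is either one of $G_{51},\dots,G_{65}$ or contains a forbidden induced subgraph and is thereby eliminated by Lemma \ref{induced subgraph}. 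When $C_3^w$ is not itself forced to be ``absorbed'' into a matched tree — i.e., when no vertex of the triangle is saturated in its rooted tree — Lemma \ref{inertia of unicyclic graph}(2) splits $i_+(G_w) = i_+(C_3^w) + i_+(G_w - C_3^w)$, and matching $3$ on the right-hand side pins down whether $C_3^w$ must be Type $C$ ($i_+(C_3^w)=2$, leaving matching number $1$ outside, as for $G_{51}$) or Type $D$ ($i_+(C_3^w)=1$, leaving matching number $2$, as for $G_{61},\dots,G_{65}$); when some triangle vertex is saturated, Lemma \ref{inertia of unicyclic graph}(1) gives the split into tree part and the rest, and the weights become irrelevant by Lemma \ref{weighted tree inertia} (these are the $G_{52},\dots,G_{60}$ with no type restriction).

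The main obstacle is the sheer bookkeeping of the case analysis: girth $3$ gives more room to hang trees than girth $4$, so the enumeration of attachment patterns — which vertex of the triangle, how long a path, how many extra isolated neighbours — is the largest of the several analogous theorems, and one must be careful that every pattern not on the list genuinely contains one of the forbidden graphs as an \emph{induced} subgraph (not merely as a subgraph), since Lemma \ref{induced subgraph} requires induced subgraphs. I would organize this by always reducing via Lemma \ref{pendant twin} to at most one pendant per quasi-pendant vertex before enumerating, which keeps each case finite and small, and by checking the ``boundary'' configurations (e.g.\ a path of length exactly $4$ or $5$ attached to the triangle) against the explicit $P_6,P_7$ thresholds. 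With that reduction in place the argument is routine but long, exactly parallel to Cases 1–5 in the proof of Theorem \ref{four}.
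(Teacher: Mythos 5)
Your proposal matches the paper's approach exactly: the paper proves Theorem \ref{three} by declaring it ``similar to Theorem \ref{four}'', i.e.\ sufficiency via Lemmas \ref{deleting pendent vertex} and \ref{cycle}, and necessity via a list of forbidden induced subgraphs with $i_+\geq 4$ followed by a case analysis on the structure of $G_w-C_3^w$, with Lemma \ref{inertia of unicyclic graph} pinning down the type of $C_3^w$ ($i_+=2$ for Type C, $i_+=1$ for Type D) in the unsaturated case. Your plan reproduces this structure faithfully, including the correct observation that the type restriction only arises when no triangle vertex is saturated in its rooted tree.
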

\begin{figure}[ht]
\center
\includegraphics [width = 12cm]{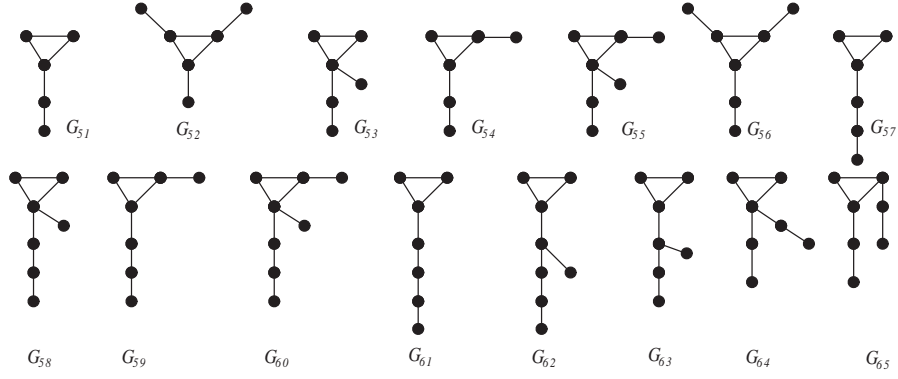}
\caption { \textit{ Fifteen unweighted graphs in Theorem 4.6} }
 \end{figure}

 \begin{theorem}\label{rank 6}
Let $G_w\in \mathcal {U}^*$ be a weighted unicyclic graph with rank
$6$ and girth $k$. Then $k=3,4,5,6$, or $8$ and
 \begin{enumerate}[(1).]
\item if $k=3$, then $G_w$ is one of the weighted graphs with one of $G_i$'s
($i=52,53,\cdots,60$) (as depicted in Fig. 7) as the underlying
graph;
\item if $k=4$, then $G_w$ is one of the following graphs: the weighted graph with one of $G_i$'s ($i=18,
19,\cdots,28$) (as depicted in Fig. 5) as the underlying graph; the
weighted graph with $G_{17}$ (as depicted in Fig. 5) as the
underlying graph in which the cycle $C_4^w$ is of Type $B$; the
weighted graph with one of $G_i$'s ($i=29,\cdots,34$) (as depicted
in Fig. 5) as the underlying graph in which the cycle $C_4^w$ is of
Type $A$;
\item if $k=5$, then $G_w$ is one of the weighted graphs with one of $G_{i}$'s $(i=12,\cdots,15)$ (as depicted in Fig. 4) as the underlying
graph;
\item if $k=6$, then $G_w$ is one of the following graphs: the weighted graph with $G_3$, $G_4$ or $G_5$ (as depicted
in Fig. 2) as the underlying graph; the weighted graph with $G_6$
(as depicted in Fig. 2) as the underlying graph in which the cycle
$C_6^w$ is of Type A;
\item if $k=8$, then $G_w$ is $C_8^w$ of Type $A$.
\end{enumerate}
 \end{theorem}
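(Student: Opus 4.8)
The starting point is that $r(G_w)=6$ is the same as $i_+(G_w)+i_-(G_w)=6$, while Lemma \ref{difference} gives $|i_+(G_w)-i_-(G_w)|\le c(G_w)\le 1$ since a unicyclic graph contains a single cycle; hence necessarily $i_+(G_w)=i_-(G_w)=3$. So the theorem is equivalent to classifying all $G_w\in\mathcal U^*$ with $i_+(G_w)=i_-(G_w)=3$, and the plan is to take the already established lists of graphs with $i_+(G_w)=3$ (Theorems \ref{eight}, \ref{six}, \ref{five}, \ref{four}, \ref{three}) and retain exactly those for which also $i_-(G_w)=3$.

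First I would bound the girth. By Lemma \ref{girth}, $k\le 8$. If $G_w\neq C_k^w$ (equivalently $n>k$), then as in the proof of Theorem \ref{thm3-3} the underlying graph contains $U_{k+1,k}$ as an induced subgraph, so by Lemmas \ref{induced subgraph} and \ref{deleting pendent vertex}, $i_+(G_w)\ge i_+(U_{k+1,k})=i_+(P_{k-1})+1$ and likewise for $i_-$; for $k\in\{7,8\}$ this forces $i_+(G_w),i_-(G_w)\ge 4$, hence $r(G_w)\ge 8$, a contradiction. Therefore for $k\in\{7,8\}$ we must have $G_w=C_k^w$, and Lemma \ref{cycle} shows $C_7^w$ has odd rank (never $6$) whereas $C_8^w$ has rank $6$ precisely when it is of Type A; this disposes of $k=7$ and gives case (5). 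It remains to handle $3\le k\le 6$.

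Here the decisive tool is the dichotomy coming from Lemma \ref{inertia of unicyclic graph} (together with Lemma \ref{weighted tree inertia} and Corollary \ref{weighted forest inertia}): either some vertex $v\in V(C_k^w)$ is saturated in its rooted tree $G_w\{v\}$, in which case $G_w\{v\}$ and $G_w-G_w\{v\}$ are both forests and $i_+(G_w)=m(G_w\{v\})+m(G_w-G_w\{v\})=i_-(G_w)$ automatically; or no cycle vertex is saturated in its rooted tree, in which case $G_w-C_k^w$ is a forest and $i_+(G_w)-i_-(G_w)=i_+(C_k^w)-i_-(C_k^w)$, so the $+/-$ asymmetry of $G_w$ is exactly that of the cycle. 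In the first case $i_+(G_w)=3$ already yields $i_-(G_w)=3$, so every graph of this kind appearing in Theorems \ref{six}--\ref{three} is retained — these are precisely the families whose membership does not depend on the edge weights. In the second case $i_+(G_w)=i_-(G_w)$ holds iff $i_+(C_k^w)=i_-(C_k^w)$, which by Lemma \ref{cycle} happens iff $k$ is even; thus for $k\in\{4,6\}$ the weight-constrained graphs (those retained only when $C_k^w$ is of Type A or Type B, both of which have symmetric inertia) stay in the list with their cycle-type conditions unchanged, while for $k\in\{3,5\}$ the graphs kept only under a Type C or Type D cycle are discarded because there $|i_+(G_w)-i_-(G_w)|=1$. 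Reading off the survivors from the lists in Theorems \ref{six}, \ref{five}, \ref{four} and \ref{three} produces cases (1)--(4).

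The only genuine work is the case analysis concealed in the last paragraph: for each underlying graph $G_i$ in Figures 2, 4, 5 and 7 one has to decide which branch of the dichotomy it belongs to — equivalently, whether a maximum matching of the underlying graph can be chosen through a vertex of the cycle — and, in the second branch, to read the inertia of the cycle from Lemma \ref{cycle}. This is routine in light of the arguments already used for Theorems \ref{eight}--\ref{three}, with Lemmas \ref{deleting pendent vertex} and \ref{pendant twin} carrying out the pendant(-twin) reductions; the point to be careful about is not to miss the graphs in which the cycle is absorbed by a matching through one of its vertices, since these are exactly the ones valid for every choice of weights.
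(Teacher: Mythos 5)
Your proposal is correct and follows essentially the same route as the paper: both reduce $r(G_w)=6$ to $i_+(G_w)=i_-(G_w)=3$ via Lemma \ref{difference} and then filter the $i_+(G_w)=3$ classifications of Theorems \ref{eight}--\ref{three}, discarding the entries with $i_-(G_w)\neq 3$. The only difference is that you make explicit, via the saturated/unsaturated dichotomy of Lemma \ref{inertia of unicyclic graph}, why exactly the Type C/D--constrained graphs for odd girth are the ones eliminated, a step the paper leaves implicit.
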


 \begin{proof}
By Lemma \ref{difference}, $r(G_w)=6$ if and only if
$i_+(G_w)=i_-(G_w)=3$ for any weighted unicyclic $G_w$. So it
suffices to characterize the weighted unicyclic graphs with rank $6$
among all weighted unicyclic graphs with three positive eigenvalues.
That is to say, we eliminate graphs $G_w$ with $i_-(G_w)\neq 3$
among all graphs with three positive eigenvalues. So the results
follow from Lemma \ref{girth} and Theorems \ref{eight}--\ref{three}.
 \end{proof}
From Theorem \ref{rank 6} and Lemma \ref{cycle}, we have
 \begin{corollary}\label{uniweighted rank 6}
Let $G\in \mathcal {U}^*$ be an unweighted unicyclic graph with rank
$6$ and girth $k$. Then $k=3,4,5,6$,or $8$ and
 \begin{enumerate}[(1).]
\item if $k=3$, then $G$ is one of $G_i$'s
($i=52,53,\cdots,60$) (as depicted in Fig. 7);
\item if $k=4$, then $G$ is one of $G_i$'s ($i=18,
19,\cdots,34$) (as depicted in Fig. 5);
\item if $k=5$, then $G$ is one of $G_{i}$'s $(i=12,\cdots,15)$ (as depicted in Fig. 4);
\item if $k=6$, then $G$ is one of $G_i$'s  $(i=3,4,5)$ (as depicted
in Fig. 2);
\item if $k=8$, then $G$ is $C_8$.
\end{enumerate}
 \end{corollary}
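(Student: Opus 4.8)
The plan is to obtain the corollary as a direct specialization of Theorem~\ref{rank 6} to the case $w\equiv 1$. An unweighted unicyclic graph $G$ is precisely the weighted graph $G_w$ in which every edge carries weight $+1$, so $G$ lies in $\mathcal U^*$ iff the associated $G_w$ does, and $r(G)=6$ iff that $G_w$ occurs in one of the five girth-indexed families of Theorem~\ref{rank 6}. In particular the admissible girths $k\in\{3,4,5,6,8\}$ carry over verbatim, so the only work left is to decide, for each family in Theorem~\ref{rank 6} stated with a restriction ``$C_k^w$ is of Type A/B/C/D'', whether the unit-weight cycle $C_k$ meets that restriction; families with no type restriction automatically contribute their underlying graphs.

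First I would pin down the type of the all-ones cycle $C_k$ for the relevant $k$. This can be read straight off the definition preceding Lemma~\ref{cycle}: with all $w_i=1$ one has $W=1$, and for even $k$ also $W_e=W_o=1$, so $W_o+(-1)^{(k-2)/2}W_e=1+(-1)^{(k-2)/2}$, while for odd $k$, $(-1)^{(k-1)/2}W=(-1)^{(k-1)/2}$. Hence $C_3$ is of Type D, $C_4$ of Type A, $C_5$ of Type C, $C_6$ of Type B, and $C_8$ of Type A. (Equivalently, one may compare the rank of $C_k$ computed from its well-known spectrum $\{2\cos(2\pi j/k)\}$ against the four cases of Lemma~\ref{cycle}.)

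Next I would run through the five cases of Theorem~\ref{rank 6}. For $k=3$ only the unrestricted family $G_{52},\dots,G_{60}$ appears, so all of these underlying graphs occur. For $k=4$ the families $G_{18},\dots,G_{28}$ are unrestricted; the family $G_{17}$ requires $C_4^w$ of Type B, which the all-ones $C_4$ (Type A) is not, so $G_{17}$ is discarded; the families $G_{29},\dots,G_{34}$ require Type A, which the all-ones $C_4$ does satisfy, so they survive---giving exactly $G_{18},\dots,G_{34}$. For $k=5$ only the unrestricted family $G_{12},\dots,G_{15}$ appears. For $k=6$ the families $G_3,G_4,G_5$ are unrestricted, while $G_6$ requires Type A, which the all-ones $C_6$ (Type B) fails, so only $G_3,G_4,G_5$ remain. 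For $k=8$ the sole graph is $C_8^w$ of Type A, and the all-ones $C_8$ is indeed of Type A, so $C_8$ stays. Collecting these five cases yields the claimed list.

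There is no genuine obstacle here---the corollary is a routine bookkeeping consequence of Theorem~\ref{rank 6}. The only points requiring a little care are the parity arithmetic in the Type definition (getting the exponents $(k-2)/2$ and $(k-1)/2$ right for $k=3,4,5,6,8$) and the observation that whenever a type-restricted family's required type is a ``$\neq 0$'' condition (Type B) or a strict-sign condition (Type C or D) that the unit-weight cycle does not meet, that family contributes nothing new, which is automatic. As a sanity check one verifies the extreme entry against Lemma~\ref{cycle} directly: $C_8$ of Type A has inertia $(3,3,2)$, hence rank $6$, matching the $k=8$ entry, and $G_6$ with $C_6^w$ of Type A would have rank $6$ by Lemma~\ref{deleting pendent vertex}, but since the all-ones $C_6$ is of Type B rather than A, $G_6$ correctly drops out of the unweighted list.
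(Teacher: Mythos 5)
Your proposal is correct and follows essentially the same route as the paper, which derives this corollary directly from Theorem \ref{rank 6} together with Lemma \ref{cycle} by determining the type of each unit-weight cycle ($C_3$ of Type D, $C_4$ and $C_8$ of Type A, $C_5$ of Type C, $C_6$ of Type B) and discarding the type-restricted families that the all-ones weighting fails to satisfy. Your parity computations and the resulting case-by-case filtering match the paper's intended argument exactly.
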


 \begin{corollary}\label{signed rank 6}
Let $\Gamma\in \mathcal {U}^*$ be a unicyclic signed graph with rank
$6$ and girth $k$. Then $k=3,4,5,6$, or $8$ and
 \begin{enumerate}[(1).]
\item if $k=3$, then $\Gamma$ is one of the signed graph with one of $G_i$'s
$(i=52,53,\cdots,60)$ (as depicted in Fig. 7) as the underlying
graph;
\item if $k=4$, then $\Gamma$ is one of the following graphs: the signed graph with one of $G_i$'s $(i=18,
19,\cdots,28)$ (as depicted in Fig. 5) as the underlying graph; the
unbalanced signed graph with $G_{17}$ (as depicted in Fig. 5) as the
underlying graph; the balanced signed graph with one of $G_i$'s
$(i=29,\cdots,34)$ (as depicted in Fig. 5) as the underlying graph;
\item if $k=5$, then $\Gamma$ is one of the signed graphs with $G_{12}$, $G_{13}$,
$G_{14}$ or $G_{15}$ (as depicted in Fig. 4) as the underlying
graph;
\item if $k=6$, then $\Gamma$ is one of the following graphs: the signed graph with $G_3$, $G_4$ or $G_5$ (as depicted
in Fig. 2) as the underlying graph; the balanced signed graph with
$G_6$ (as depicted in Fig. 2) as the underlying graph;
\item if $k=8$, then $\Gamma$ is the balanced cycle $C_8$.
\end{enumerate}
 \end{corollary}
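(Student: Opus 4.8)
The plan is to derive Corollary~\ref{signed rank 6} as an immediate specialization of Theorem~\ref{rank 6}. A unicyclic signed graph is nothing but a weighted unicyclic graph all of whose edge weights lie in $\{+1,-1\}$, so every $\Gamma\in\mathcal{U}^*$ with $r(\Gamma)=6$ is among the graphs produced by Theorem~\ref{rank 6}, and conversely every $\pm1$-signing of one of the underlying graphs $G_i$ occurring there which obeys the prescribed cycle-type restriction has rank $6$. Since the weights on the pendant/tree part do not affect the inertia (Lemma~\ref{weighted tree inertia}) and a signing realizing any prescribed cycle type clearly exists, the whole content of the corollary reduces to translating the conditions ``Type A/B/C/D on the unique cycle'' into statements about $sgn(C_k)$.

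That translation is a short computation. For a signed cycle $C_k$ one has $W=\prod_{i=1}^{k}w_i=sgn(C_k)\in\{+1,-1\}$ and $W_e^2=W_o^2=1$ because each $w_i=\pm1$. Hence, for $k$ even, Type~A (i.e.\ $W_o+(-1)^{(k-2)/2}W_e=0$) is equivalent to $W_o=(-1)^{k/2}W_e$, that is, to $sgn(C_k)=W_oW_e=(-1)^{k/2}$; so Type~A means ``balanced'' exactly when $k\equiv0\pmod{4}$ and ``unbalanced'' exactly when $k\equiv2\pmod{4}$, while Type~B is the complementary case. For $k$ odd, $(-1)^{(k-1)/2}W\in\{+1,-1\}$ is never $0$, so every signed odd cycle is of Type~C or of Type~D, with Type~C $\iff sgn(C_k)=(-1)^{(k-1)/2}$ and Type~D $\iff sgn(C_k)=-(-1)^{(k-1)/2}$. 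One checks that these specializations are consistent with Lemma~\ref{cycle} and with the balanced/unbalanced description already extracted in the signed corollary following Theorem~\ref{nullity n-4}.

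Finally I would feed this dictionary clause by clause through Theorem~\ref{rank 6}. For $k=3$ and $k=5$ the graphs $G_{52},\dots,G_{60}$ and $G_{12},\dots,G_{15}$ appear with no cycle-type restriction, so every signing qualifies, giving parts~(1) and~(3); for $k=4$ the condition ``$C_4^w$ of Type~B'' on $G_{17}$ becomes ``unbalanced'' and ``$C_4^w$ of Type~A'' on $G_{29},\dots,G_{34}$ becomes ``balanced'', giving part~(2); for $k=6$ one rewrites ``$C_6^w$ of Type~A'' on $G_6$ via the even-cycle dictionary, giving part~(4); and for $k=8$, since $8\equiv0\pmod{4}$, ``$C_8^w$ of Type~A'' is exactly ``$C_8$ balanced'', giving part~(5). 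The only step requiring any care is keeping straight the parity of $k/2$ in the even-cycle dictionary, so that Type~A is correctly read off as a balance or an imbalance condition according to $k$; beyond this bookkeeping there is nothing left to prove, since all the spectral work is already contained in Theorem~\ref{rank 6}, hence ultimately in Lemma~\ref{difference} and Theorems~\ref{eight}--\ref{three}.
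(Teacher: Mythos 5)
Your approach is exactly the paper's: the corollary is stated as an immediate specialization of Theorem \ref{rank 6} to $\pm1$ weights, and the only content is the dictionary translating Types A--D into balance conditions, which you compute correctly ($W_e^2=W_o^2=1$, so for even $k$ Type A $\iff sgn(C_k)=(-1)^{k/2}$, and for odd $k$ Type C/D $\iff sgn(C_k)=\pm(-1)^{(k-1)/2}$). One point you should not gloss over, however: your dictionary says Type A on an even cycle means \emph{unbalanced} precisely when $k\equiv 2\pmod 4$, so for $k=6$ the condition ``$C_6^w$ of Type A'' on $G_6$ translates to the \emph{unbalanced} signed graph with $G_6$ as underlying graph --- yet you assert this ``gives part (4)'' as stated, and part (4) says ``balanced.'' Your computation (not the statement) is the correct one: an unbalanced signed $C_6$ has eigenvalues $\pm\sqrt3$ each twice and $0$ twice, hence $i_+(C_6^w)=2$ and $i_+(G_6^w)=3$ by Lemma \ref{deleting pendent vertex}, whereas the balanced $C_6$ gives $i_+=3$ and rank $8$ for $G_6$; this also matches the earlier signed corollary after Theorem \ref{nullity n-4}, which correctly lists the \emph{unbalanced} $C_6$ for Type A. So you should either flag clause (4) as a misprint or correct it in your write-up; as it stands your proof silently derives a conclusion different from the one you claim to have proved.
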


 By Lemma \ref{pendant twin} and Theorem \ref{rank 6}, we have
 \begin{theorem}\label{rank6 n}
Let $G_w$ be a weighted unicyclic graph with rank $6$. Then the
underlying graph of $G_w$ must contain one of the graphs described
in Theorem \ref{rank 6} as an induced subgraph.
 \end{theorem}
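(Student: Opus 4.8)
The plan is to reduce to the pendant-twin-free case already settled by Theorem \ref{rank 6}, using Lemma \ref{pendant twin} as the engine of the reduction.

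First I would record the basic step. Suppose a weighted unicyclic graph $F_w$ has a pair of pendant twins $u,v$, and let $F_w-u$ be obtained by deleting $u$ together with its incident edge. Then $F_w-u$ is again weighted unicyclic: $u$ is pendant, hence not on the unique cycle, so the cycle (and in particular the girth) is untouched, and the remaining edges keep their original weights. By Lemma \ref{pendant twin}, $i_+(F_w-u)=i_+(F_w)$ and $i_-(F_w-u)=i_-(F_w)$, so $r(F_w-u)=i_+(F_w-u)+i_-(F_w-u)=r(F_w)$. Finally $F_w-u$ is, by the definition in Section 1, an induced subgraph of $F_w$, so its underlying graph is an induced subgraph of the underlying graph of $F_w$.

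Next I would iterate. Starting from $G_w$ with $r(G_w)=6$, as long as the current graph has a pair of pendant twins, delete one of them. Each step strictly decreases the number of vertices, so the process terminates after finitely many steps at a weighted unicyclic graph $H_w$ that has no pendant twins, i.e. $H_w\in\mathcal{U}^*$. By the basic step, $r(H_w)=r(G_w)=6$, and, since a composition of induced subgraphs is an induced subgraph (and the weights inherited along the chain are consistent), the underlying graph $H$ of $H_w$ is an induced subgraph of the underlying graph $G$ of $G_w$.

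Applying Theorem \ref{rank 6} to $H_w$, the girth of $H_w$ is $3,4,5,6$ or $8$ and the underlying graph $H$ is one of the graphs listed there (in particular $C_8$ when the girth is $8$). Hence $G$ contains one of those graphs as an induced subgraph, which is exactly the assertion of Theorem \ref{rank6 n}; when $G_w$ itself has no pendant twins one has $H_w=G_w$ and the statement is immediate. The only thing needing care is the bookkeeping of the iterated reduction, namely that unicyclicity, the edge weights, the rank, and the induced-subgraph relation are all preserved at every step; but this is routine, so the theorem is essentially a corollary of Theorem \ref{rank 6} together with Lemma \ref{pendant twin}.
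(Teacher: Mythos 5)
Your proposal is correct and follows exactly the paper's route: the paper derives Theorem \ref{rank6 n} directly from Lemma \ref{pendant twin} and Theorem \ref{rank 6}, and your argument simply makes the iterated pendant-twin deletion (preserving unicyclicity, rank, and the induced-subgraph relation) explicit. No gaps.
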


 By Lemma \ref{pendant twin} and Theorem \ref{rank6 n}, we can characterize all weighted (unweighted)
 unicyclic graphs of order $n$ with rank $6$ by attaching some appropriate
 pendant vertices to certain neighbors of a few pendant vertices of the graphs described in Corollary \ref{uniweighted rank 6}.
 \begin{example}
All unweighted unicyclic graphs of order $8$ with rank $6$ are $C_8$
and the 45 graphs depicted in Fig. 8. This result corresponds to the
one obtained by Cvetkovi\'{c} and Rowlinson \cite{cvetkovic}.
 \end{example}
 \begin{figure}[ht]
\center
\includegraphics [width = 10cm]{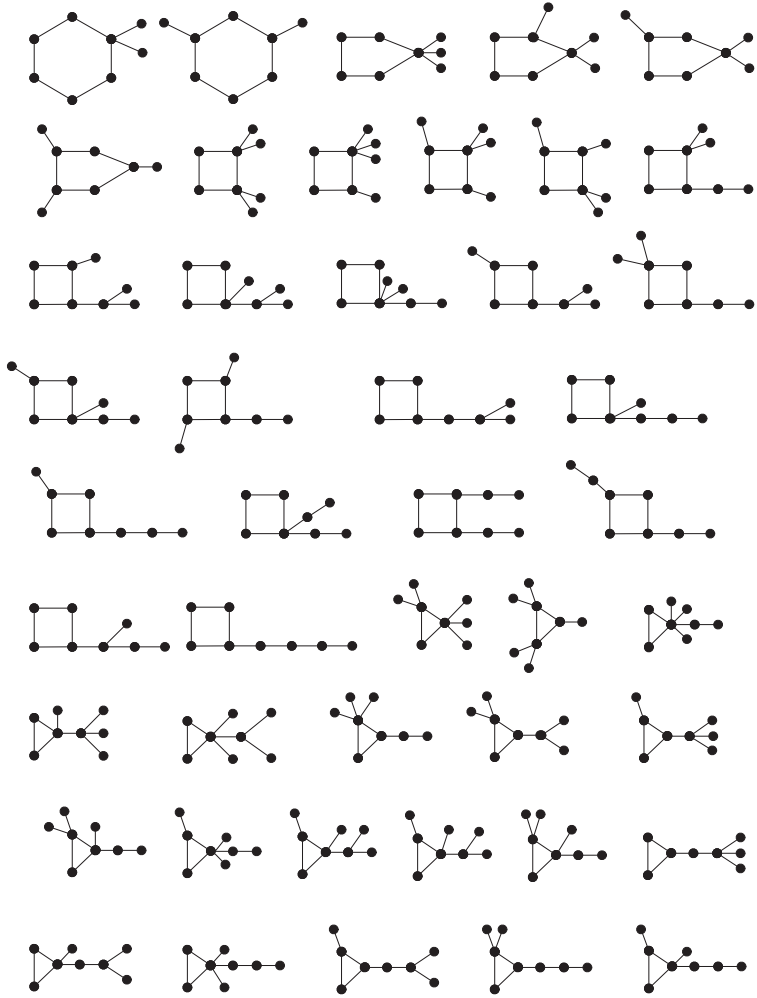}
\caption { \textit{ 45 unweighted unicyclic graphs on eight
vertices} }
 \end{figure}

 \section{Weighted unicyclic graphs with rank $2$, $3$, $5$}

Fan et al \cite{fan1} characterized the unicyclic signed graph of
order $n$ with rank $2$, $3$, $4$, $5$, respectively. In this
section we shall consider the same question following the ideas in
\cite{fan1}. In Section 3, we characterized the weighted unicyclic
graphs of order $n$ with rank $4$. Here we shall determine the
weighted unicyclic graphs of order $n$ with rank $2$, $3$, $5$,
respectively.

\begin{theorem}\label{rank 2 3}
Let $G_w$ be a weighted unicyclic graph of order $n$ and $C_k^w$ be the
unique cycle in $G_w$ with vertex set $\{v_1,v_2,\cdots,v_k\}$. Then

(1). $i_0(G_w)=n-2$ ($r(G_w)=2$) if and only if $G_w$ is the
weighted cycle $C_4$ which is of Type $A$.

(2). $i_0(G_w)=n-3$ ($r(G_w)=3$) if and only if $G_w$ is the cycle
$C_3$ with arbitrary weights.
\end{theorem}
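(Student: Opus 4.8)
The plan is to reduce everything to the earlier structural lemmas about weighted unicyclic graphs. Since $r(G_w)=i_+(G_w)+i_-(G_w)$ and Lemma \ref{difference} gives $|i_+(G_w)-i_-(G_w)|\le 1$ (there is at most one cycle), a rank of $2$ forces $i_+(G_w)=i_-(G_w)=1$, and a rank of $3$ forces $\{i_+(G_w),i_-(G_w)\}=\{1,2\}$. In either case the total rank is extremely small, so the first step is to rule out pendant vertices: by Lemma \ref{deleting pendent vertex}, a pendant vertex and its neighbor each contribute one to $i_+$ and one to $i_-$, so if $G_w$ had a pendant vertex then $i_+(G_w)\ge 1$ and $i_-(G_w)\ge 1$ would come from that edge alone, and the remaining graph $G_w-u-v$ (still unicyclic or a forest containing the cycle) would have to have rank $0$ for part (1), forcing its cycle to vanish entirely — impossible. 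More carefully, Corollary \ref{minimum positive index} already says that a weighted unicyclic graph with pendant vertices has $i_+\ge 2$ and $i_-\ge 2$, hence rank $\ge 4$; so for rank $2$ or $3$ the graph $G_w$ must have no pendant vertices at all, i.e. $G_w$ is exactly its own cycle $C_k^w$.

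Once $G_w=C_k^w$, the second step is a direct appeal to Lemma \ref{cycle}, which gives the inertia of every weighted cycle in terms of its type. For $r(C_k^w)=2$ we need $i_0(C_k^w)=k-2$, and inspecting the four cases of Lemma \ref{cycle} this happens precisely when $k$ is even and $C_k^w$ is of Type A, in which case $i_0=2$, forcing $k-2=2$, i.e. $k=4$. Conversely $C_4^w$ of Type A has inertia $(1,1,2)$, so its rank is indeed $2$. This proves (1). For $r(C_k^w)=3$ we need $i_0(C_k^w)=k-3$; among the four cases, Type A gives nullity $2$ (wrong parity issue: $k-3$ odd would need $k$ odd, but Type A requires $k$ even, and anyway nullity $2\ne k-3$ unless $k=5$, contradiction with $k$ even), Type B gives nullity $0$ (so $k=3$, but $C_3$ is odd, not Type B), Types C and D give nullity $0$ with $k$ odd (so $k=3$). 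Hence $k=3$, and since every weighted $C_3$ is of Type C or Type D (never A or B, as $3$ is odd), every weighted triangle has rank $3$ regardless of the weights. That establishes (2).

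The only genuinely delicate point is the bookkeeping in the case analysis of Lemma \ref{cycle}: one must check, for each of Types A, B, C, D, both the parity constraint on $k$ built into the definition and the resulting value of $i_0$, and confirm that $i_0=k-2$ (resp. $k-3$) has the unique solution claimed. I would present this as a short table rather than prose. A secondary subtlety is making sure the ``no pendant vertices'' reduction is airtight: I would phrase it via Corollary \ref{minimum positive index} to avoid re-deriving the pendant-deletion argument, noting that the hypothesis $3\le k$ is automatic and that $k\le n$ is the only constraint (the girth can equal $n$, which is exactly the pendant-free case we land in). With those two observations in place the proof is essentially a one-line invocation of Lemmas \ref{difference}, \ref{cycle} and Corollary \ref{minimum positive index}.
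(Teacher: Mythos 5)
Your proof is correct, but the way you exclude pendant vertices is genuinely different from the paper's. The paper handles a non-cycle $G_w$ via the saturated/unsaturated decomposition of Lemma \ref{inertia of unicyclic graph}: if some cycle vertex $v_i$ is saturated in $G_w\{v_i\}$, then $m(G_w\{v_i\}),\,m(G_w-G_w\{v_i\})\geq 1$ forces $i_0(G_w)\leq n-4$; otherwise it derives $i_0(C_k^w)\geq k-1\geq 2$ (resp. $\geq k\geq 3$), which is incompatible with Lemma \ref{cycle}. You instead invoke Corollary \ref{minimum positive index} --- any weighted unicyclic graph with a pendant vertex has $i_+,i_-\geq 2$, hence rank at least $4$ --- which disposes of both cases at once and is arguably cleaner, since that corollary is already available from the induced $U_{k+1,k}$ together with Lemmas \ref{deleting pendent vertex} and \ref{induced subgraph}. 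Both arguments then finish identically with the case analysis of Lemma \ref{cycle}, and your bookkeeping there (rank $k-2=2$ only for Type A with $k=4$; rank $k=3$ only for $k=3$, and every weighted $C_3$ is of Type C or D since $3$ is odd and the weight product is nonzero) matches the paper's conclusions. Two minor remarks: your preliminary sketch that $G_w-u-v$ ``would have to have rank $0$, forcing its cycle to vanish'' is imprecise when the quasi-pendant $u$ lies on the cycle (the cycle does not survive the deletion, but a path $P_{k-1}$ of rank at least $2$ does), though you rightly discard that sketch in favour of the corollary; and the appeal to Lemma \ref{difference} is harmless but not actually needed once $G_w$ is known to be a cycle, since Lemma \ref{cycle} gives the full inertia directly.
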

\begin{proof}
It is obvious that the sufficiency for (1) or (2) holds by Lemma
\ref{cycle}. Next we consider the necessity.

Assume that $i_0(G_w)=n-2$. If $G_w$ is a weighted cycle, then $G_w$
is the weighted cycle $C_4$ which is of Type $A$ by Lemma
\ref{cycle}. Next assume that $G_w$ contains pendant edges. Suppose
there exists a vertex $v_i$ in $C_k^w$ such that it is saturated in
$G_w\{v_i\}$. Without loss of generality, suppose $v_1 \in V(C_k^w)$
is saturated in $G_w\{v_1\}$. By Lemma \ref{inertia of unicyclic
graph}, we have
\begin{eqnarray*}
i_0(G_w)&=&i_0(G_w\{v_1\})+i_0(G_w-G_w\{v_1\})\\
&=&n-2m(G_w\{v_1\})-2m(G_w-G_w\{v_1\}).
\end{eqnarray*}
Since $m(G_w\{v_1\})\geq 1$ and $m(G_w-G_w\{v_1\})\geq 1$,
$i_0(G_w)\leq n-4$. This is a contradiction.

Suppose there does not exist a vertex $v_i\in V(C_k^w)$ which
is saturated in $G_w\{v_i\}$. By Lemma \ref{inertia of unicyclic
graph},
\begin{eqnarray*}
i_0(G_w)&=&i_0(G_w-C_k^w)+i_0(C_k^w)\\
&=&n-k-2m(G_w-C_k^w)+i_0(C_k^w).
\end{eqnarray*}
It yields that $i_0(C_k^w)=k+2\big(m(G_w-C_k^w)-1\big)\geq k\geq3$
which is a contradiction.

Assume that $i_0(G_w)=n-3$. If $G_w$ is a weighted cycle, by Lemma
\ref{cycle}, $G_w$ is $C_3^w$ with arbitrary weights. Assume that
$G_w$ contains at least one pendant edge. By the above discussion,
if there exists a vertex $v_i\in V(C_k^w)$ which is saturated in
$G_w\{v_i\}$, then $i_0(G_w)\leq n-4$ which is a contradiction. If
there does not exist a vertex $v_i\in V(C_k^w)$  which is saturated
in $G_w\{v_i\}$, then $i_0(C_k^w)=k+2m(G_w-C_k^w)-3\geq k-1\geq 2$.
So only if $k=3$, $i_0(G_w)=2$. But $i_0(C_3^w)=0$. This case cannot
occur.
\end{proof}

The following results are immediate from Theorem \ref{rank 2 3}.
\begin{corollary}
Let $G$ be an unweighted unicyclic graph of order $n$. Then
\begin{enumerate}[(1).]
\item $i_0(G)=n-2$ ($r(G)=2$) if and only if $G$ is
the cycle $C_4$.
\item $i_0(G)=n-3$ ($r(G)=3$) if and only if $G$ is
the cycle $C_3$.
\end{enumerate}
\end{corollary}

\begin{corollary} \cite{fan1}
Let $\Gamma$ be a unicyclic signed graph of order $n$. Then
\begin{enumerate}[(1).]
\item $i_0(\Gamma)=n-2$ ($r(\Gamma)=2$) if and only if $\Gamma$ is
the balanced cycle $C_4$.
\item $i_0(\Gamma)=n-3$ ($r(\Gamma)=3$) if and only if $\Gamma$ is
the cycle $C_3$.
\end{enumerate}
\end{corollary}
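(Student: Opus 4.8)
The plan is to obtain this corollary directly from Theorem \ref{rank 2 3} by specializing that result to the case in which every edge weight lies in $\{+1,-1\}$; recall that a signed graph is precisely a weighted graph of this restricted type. Thus the only genuine task is to rewrite the two conditions appearing in Theorem \ref{rank 2 3}, namely ``$C_4^w$ is of Type A'' and ``$C_3$ with arbitrary weights'', in terms of balance.

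For part (1), Theorem \ref{rank 2 3}(1) asserts that $r(\Gamma)=2$ if and only if the underlying graph is $C_4$ and this weighted $4$-cycle is of Type A. Denoting the consecutive edge weights of the cycle by $w_1,w_2,w_3,w_4$, we have $W_e=w_2w_4$ and $W_o=W/W_e=w_1w_3$, so the Type A condition $W_o+(-1)^{(4-2)/2}W_e=0$ becomes $w_1w_3=w_2w_4$. When each $w_i\in\{+1,-1\}$ we have $w_i^2=1$, hence $w_1w_3=w_2w_4$ is equivalent to $w_1w_2w_3w_4=(w_2w_4)^2=1$, i.e.\ $sgn(C_4)=+$; conversely, if $C_4$ is balanced then $w_1w_2w_3w_4=1$ and multiplying through by $w_2w_4$ gives $w_1w_3=w_2w_4$. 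Therefore, in the signed setting, ``Type A $C_4$'' and ``balanced $C_4$'' coincide, which is exactly part (1).

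For part (2), Theorem \ref{rank 2 3}(2) gives $r(\Gamma)=3$ if and only if the underlying graph is $C_3$ with arbitrary weights. Restricting the weights to $\{+1,-1\}$, ``arbitrary weights'' simply means an arbitrary sign assignment on the three edges of $C_3$, i.e.\ the cycle $C_3$ with no constraint (balanced or unbalanced), which is the claimed characterization.

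The deduction is essentially immediate once Theorem \ref{rank 2 3} is available; the single computational point deserving care is the equivalence between the Type A condition for $C_4$ and the balance condition, where one must substitute $k=4$ into the sign $(-1)^{(k-2)/2}$ and the factorization $W=W_oW_e$ correctly and use $w_i^2=1$ throughout.
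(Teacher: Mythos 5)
Your proposal is correct and follows the same route as the paper, which simply notes that the corollary is immediate from Theorem \ref{rank 2 3}; your explicit verification that the Type A condition $w_1w_3=w_2w_4$ for a $\{\pm1\}$-weighted $C_4$ is equivalent to $sgn(C_4)=+$ (using $w_i^2=1$) correctly supplies the one translation step the paper leaves implicit.
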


Let $H_{n,3}^1$ be an unweighted unicyclic graph obtained by joining
a vertex of $C_3$ and the center of $K_{1, n-4}$, the star of order
$n-3$.

\begin{theorem}\label{rank 5}
Let $G_w$ be a weighted unicyclic graph of order $n\geq 5$. Then
$i_0(G_w)=n-5$ ($r(G_w)=5$) if and only if $G_w$ is the weighted
graph with $C_5$, or $H_{n,3}^1$ as the underlying graph.
\end{theorem}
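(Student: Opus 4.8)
\textbf{Proof plan for Theorem \ref{rank 5}.}
The sufficiency direction is routine: if the underlying graph is $C_5$, then by Lemma \ref{cycle} every weighted $C_5$ has rank $5$ (it is either Type C or Type D, both of which give nullity $0$), and if the underlying graph is $H_{n,3}^1$, then repeatedly applying Lemma \ref{deleting pendent vertex} to the pendant twins at the center of the star (or, more cleanly, Lemma \ref{pendant twin}) reduces $G_w$ to a weighted graph on $5$ vertices whose underlying graph is $C_3$ with a pendant path of length $2$ attached; another application of Lemma \ref{deleting pendent vertex} reduces this to $C_3^w$, so $i_+(G_w)=i_+(C_3^w)+1=2+1$ and similarly $i_-(G_w)=3$ wait — we must be careful: $i_+(C_3^w)=2$ only if $C_3^w$ is Type C; if it is Type D then $i_+=1$ and $i_-=2$. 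In either case $r(G_w)=r(C_3^w)+2=3+2=5$, since $C_3$ always has full rank $3$ by Lemma \ref{cycle}. So in fact the weights on the triangle are unconstrained, matching the statement, and $i_0(G_w)=n-5$.

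For the necessity direction, the plan is to mimic the proof of Theorem \ref{rank 2 3}, using the dichotomy of Lemma \ref{inertia of unicyclic graph}. First I would dispose of the case that $G_w$ is itself a weighted cycle: by Lemma \ref{cycle}, $r(C_n^w)\in\{n-2,n\}$, so $r(C_n^w)=5$ forces $n=5$ (the Type A/B values $n-2,n$ give $5$ only when $n=7$ Type A or $n=5$ Type B — recheck: Type A gives $n-2=5\Rightarrow n=7$, but $7$ is odd so Type A does not apply; Type B needs $n$ even; Types C, D give odd $n$ with rank $n$, so $n=5$), hence $G_w=C_5^w$ of Type C or D. Then assume $G_w$ has at least one pendant edge, and split according to Lemma \ref{inertia of unicyclic graph}. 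If some $v_i\in V(C_k^w)$ is saturated in $G_w\{v_i\}$, then
\begin{eqnarray*}
i_0(G_w)&=&i_0(G_w\{v_i\})+i_0(G_w-G_w\{v_i\})\\
&=&n-2m(G_w\{v_i\})-2m(G_w-G_w\{v_i\}),
\end{eqnarray*}
so $r(G_w)=2m(G_w\{v_i\})+2m(G_w-G_w\{v_i\})$ is even, contradicting $r(G_w)=5$. Hence no vertex of the cycle is saturated in its rooted tree, and Lemma \ref{inertia of unicyclic graph}(2) gives $i_0(G_w)=n-k-2m(G_w-C_k^w)+i_0(C_k^w)$, i.e. $r(G_w)=k+2m(G_w-C_k^w)-i_0(C_k^w)=5$. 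Since $i_0(C_k^w)\le 2$ and $m(G_w-C_k^w)\ge 1$ (because $G_w$ has a pendant edge and no cycle vertex is saturated, there is an edge of $G_w-C_k^w$ in a maximum matching), we get $k - i_0(C_k^w) \le 3$; combined with $k\ge 3$ and the possible values $i_0(C_k^w)\in\{0,2\}$ from Lemma \ref{cycle}, this forces either $k=3$ with $m(G_w-C_3^w)=1$, or $k=5$ with $i_0(C_5^w)=2$ — but Lemma \ref{cycle} shows $i_0(C_5^w)=0$ always (odd cycles are Type C or D), so $k=3$ and $m(G_w-C_3^w)=1$.

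It remains to convert the structural data ``$k=3$, no triangle vertex saturated in its rooted tree, and $m(G_w-C_3^w)=1$'' into ``the underlying graph is $H_{n,3}^1$ (equivalently, the graph $U_3^{n-4}$ of Section 3) or, after re-examining, $C_5$''. Since $m(G_w-C_3^w)=1$, the forest $G_w-C_3^w$ has exactly one edge in any maximum matching, so it is a disjoint union of a single nontrivial star together with isolated vertices. I would then argue that each isolated vertex of $G_w-C_3^w$ is a pendant vertex of $G_w$ attached to a triangle vertex, which would make that triangle vertex saturated in its rooted tree — a contradiction — unless there are no such isolated vertices; likewise the unique nontrivial star-component must attach to the triangle through a single edge from its center (any other attachment pattern either makes a triangle vertex saturated or creates a $P_5$-type induced subgraph forcing rank $\ge 6$ via Lemmas \ref{induced subgraph} and \ref{weighted tree inertia}, or forces $m(G_w-C_3^w)\ge 2$). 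This pins down the underlying graph as $H_{n,3}^1$, and by Lemma \ref{pendant twin} the weights on the pendant edges of the star are irrelevant while the triangle weights are unconstrained, completing the characterization. The main obstacle is precisely this last bookkeeping step: carefully enumerating how the components of $G_w-C_3^w$ can attach to $C_3$ without violating either the ``no saturated cycle vertex'' hypothesis or the rank bound, and checking that $H_{n,3}^1$ is the only survivor (the role played in the theorem statement by ``$C_5$'' being merely the earlier, cycle-only, case $n=5$).
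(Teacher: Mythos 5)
Your proposal follows essentially the same route as the paper's proof: dispose of the pure-cycle case via Lemma \ref{cycle}, derive a parity contradiction when some cycle vertex is saturated in its rooted tree, and in the unsaturated case use Lemma \ref{inertia of unicyclic graph} to get $i_0(C_k^w)=k+2m(G_w-C_k^w)-5$, forcing $k=3$ and $m(G_w-C_3^w)=1$ and hence the underlying graph $H_{n,3}^1$. The only slip is in the subcase $i_0(C_k^w)=2$, where the value to exclude is $k=4$ (giving $2m(G_w-C_k^w)=3$, impossible by parity) rather than $k=5$; this does not affect the conclusion, and your extra bookkeeping showing that ``$m(G_w-C_3^w)=1$ plus no saturated cycle vertex'' pins down $H_{n,3}^1$ is detail the paper itself elides.
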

\begin{proof}
It is obvious that the sufficiency holds by Lemmas \ref{cycle} and
\ref{deleting pendent vertex}.

Necessity: If $G_w$ is a weighted cycle, by Lemma \ref{cycle}, $G_w$
is $C_5^w$ with arbitrary weights. Next assume that $G_w$ contains
at least one pendant edge. Suppose there exists a vertex $v_i\in
V(C_k^w)$ which is saturated in $G_w\{v_i\}$. Then
$i_0(G_w)=n-2m(G_w\{v_i\})-2m(G_w-G_w\{v_i\})$. So
$2m(G_w\{v_i\})+2m(G_w-G_w\{v_i\})=5$ which is a contradiction.

Assume that for any vertex $v\in V(C_k^w)$ it is not saturated in
$G_w\{v\}$. By Lemma \ref{cycle}, we have
$i_0(G_w)=n-k-2m(G_w-C_k^w)+i_0(C_k^w).$ Hence
$$i_0(C_k^w)=k+2m(G_w-C_k^w)-5.\eqno(*)$$

Note that $i_0(C_k^w)=0$, or $2$. If $i_0(C_k^w)=0$, then $k=3$ and
$m(G_w-C_k^w)=1$ which implies $G_w-C_k^w$ is a star. Hence the
underlying graph $G$ of $G_w$ is isomorphism to $H_{n,3}^1$. If
$i_0(C_k^w)=2$,  we have $k\leq 5$. Then $k=4$ by Lemma \ref{cycle}.
From $(*)$, $2m(G_w-C_k^w)=3$ which is a contradiction. This case
cannot hold.
\end{proof}

The next results follow from Theorem \ref{rank 5}.
\begin{corollary}\cite{guo}
Let $G$ be a unicyclic graph of order $n$. Then $i_0(G)=n-5$
($r(G)=5$) if and only if $G$ is the cycle $C_5$ or $H_{n,3}^1$.
\end{corollary}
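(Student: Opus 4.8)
The plan is to invoke Theorem~\ref{rank 5} from \cite{guo} (or rather, to derive it as a consequence of the already-proven Theorem~\ref{rank 5} of this paper) by simply specializing all edge weights to $+1$. First I would observe that an unweighted unicyclic graph is a weighted unicyclic graph in which every edge carries weight $+1$, so $i_0(G) = n - 5$ is exactly the condition $i_0(G_w) = n-5$ for this particular weighting. Applying Theorem~\ref{rank 5} directly gives that $G$ must have $C_5$ or $H_{n,3}^1$ as its underlying graph; since in the unweighted case the graph \emph{is} its underlying graph, this says $G$ is $C_5$ or $H_{n,3}^1$.

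The only genuine content to check is that the weight-type conditions appearing implicitly in Theorem~\ref{rank 5} impose no further restriction in the all-ones case. For $G_w$ with underlying graph $C_5$: the cycle $C_5$ is odd, so by Lemma~\ref{cycle} it always has rank $5$ regardless of whether it is of Type C or Type D, and with all weights $+1$ we get $(-1)^{(5-1)/2}W = (1)(1) = 1 > 0$, i.e.\ Type C, giving $i_0 = 0$ and hence $r(C_5) = 5$. For $G_w$ with underlying graph $H_{n,3}^1$: the cycle is $C_3$, which is odd and by Lemma~\ref{cycle} always has nullity $0$ irrespective of type, so attaching the star via Lemma~\ref{deleting pendent vertex} (deleting the pendant vertices of the star one quasi-pendant pair at a time, which drops the order by $2$ and preserves nullity) reduces $H_{n,3}^1$ to a graph whose nullity equals $n - 5$. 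Thus both underlying graphs do realize rank $5$ for the all-ones weighting, so the converse direction holds as well.

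I would therefore write the proof as: the sufficiency follows from Lemmas~\ref{cycle} and~\ref{deleting pendent vertex} exactly as in the proof of Theorem~\ref{rank 5}, specialized to unit weights; the necessity is immediate from Theorem~\ref{rank 5} by regarding $G$ as the weighted graph in which every edge has weight $+1$. The main (and only) obstacle is the trivial bookkeeping of confirming that no Type~A/B/C/D condition from Theorem~\ref{rank 5} excludes the unit weighting — and as noted, since both relevant cycles ($C_3$ and $C_5$) are odd, Lemma~\ref{cycle} shows their nullity is type-independent, so nothing is excluded. Hence the corollary follows with essentially no additional work.
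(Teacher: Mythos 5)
Your proposal is correct and matches the paper's approach exactly: the paper derives this corollary by simply specializing Theorem~\ref{rank 5} to the all-ones weighting, with no further argument given. Your additional check that the Type conditions impose no restriction (both $C_3$ and $C_5$ being odd cycles with type-independent nullity by Lemma~\ref{cycle}) is sound and only makes explicit what the paper leaves implicit.
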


\begin{corollary}\cite{fan1}
Let $\Gamma$ be a unicyclic signed graph of order $n$. Then
$i_0(\Gamma)=n-5$ ($r(\Gamma)=5$) if and only if $\Gamma$ is the
following graphs: the cycle $C_5$; the signed graph with $H_{n,3}^1$
as the underlying graph.
\end{corollary}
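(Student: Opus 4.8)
The plan is to follow the same dichotomy used throughout Section~3, namely the case split provided by Lemma~\ref{inertia of unicyclic graph}. Sufficiency is routine: for the cycle $C_5^w$, Lemma~\ref{cycle} gives $r(C_5^w)=4$ if it is of Type A and $5$ otherwise, but since a $5$-cycle is odd it is only ever of Type C or D, so $r(C_5^w)=5$ always; for $H_{n,3}^1$, repeatedly applying Lemma~\ref{deleting pendent vertex} to strip off pendant twins (or pendant vertices via Lemma~\ref{pendant twin} / Lemma~\ref{deleting pendent vertex}) reduces it to $C_3^w$ with one pendant edge, which has rank $5$ by Lemmas~\ref{cycle} and \ref{deleting pendent vertex}. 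So the substance is the necessity direction.

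First I would dispose of the case that $G_w$ is itself a cycle: Lemma~\ref{cycle} forces $k=5$ (since $i_0(C_k^w)\in\{0,2\}$ and we need $i_0=n-5=k-5$, i.e. $i_0(C_k^w)=k-5$, which among $k\le 6$ only $k=5$ with $i_0=0$ satisfies). Then assume $G_w$ has at least one pendant edge and invoke Lemma~\ref{inertia of unicyclic graph}. In case (1) of that lemma, a vertex $v_i\in V(C_k^w)$ is saturated in $G_w\{v_i\}$, so
$$i_0(G_w)=i_0(G_w\{v_i\})+i_0(G_w-G_w\{v_i\})=n-2m(G_w\{v_i\})-2m(G_w-G_w\{v_i\}),$$
forcing $2m(G_w\{v_i\})+2m(G_w-G_w\{v_i\})=5$, impossible by parity. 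In case (2), no vertex of $C_k^w$ is saturated in its rooted tree, so $i_0(G_w)=i_0(G_w-C_k^w)+i_0(C_k^w)=n-k-2m(G_w-C_k^w)+i_0(C_k^w)$, which rearranges to the key identity $i_0(C_k^w)=k+2m(G_w-C_k^w)-5$.

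The main obstacle — really just the bookkeeping heart of the argument — is extracting the graph from this identity together with the constraint $i_0(C_k^w)\in\{0,2\}$ (Lemma~\ref{cycle}) and $k\ge 3$. If $i_0(C_k^w)=0$ then $k+2m(G_w-C_k^w)=5$ with $m(G_w-C_k^w)\ge 0$; since $G_w$ has a pendant edge and no vertex of the cycle is saturated in its rooted tree, $m(G_w-C_k^w)\ge 1$ would be needed unless $m(G_w-C_k^w)=1$ and $G_w-C_k^w$ is exactly a star — the case $m(G_w-C_k^w)=1,k=3$ gives $G\cong H_{n,3}^1$, while $m(G_w-C_k^w)=0$ would make every off-cycle vertex pendant adjacent to the cycle, contradicting non-saturation. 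If $i_0(C_k^w)=2$ then $k\le 5$ and in fact $k=4$ (a cycle with $i_0=2$ is even of Type A, and $C_6^w$ of Type A would give $i_0=4$ forcing $k\le 5$; among even $k\le 5$ only $k=4$), whence $2m(G_w-C_k^w)=3$, impossible. Collecting the surviving possibilities — $C_5^w$ and the weighted graphs on $H_{n,3}^1$ — gives the claim. I would close by recording the corollaries for unweighted and signed graphs as immediate specializations, exactly as the preceding theorems in the section do.
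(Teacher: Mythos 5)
Your proposal is correct and follows essentially the same route as the paper: the corollary is obtained by specializing Theorem \ref{rank 5}, whose proof you reproduce via the saturated/unsaturated dichotomy of Lemma \ref{inertia of unicyclic graph}, the parity contradiction in the saturated case, and the identity $i_0(C_k^w)=k+2m(G_w-C_k^w)-5$ with the case split $i_0(C_k^w)\in\{0,2\}$ in the unsaturated case. Your explicit exclusion of the $k=5$, $m(G_w-C_k^w)=0$ subcase is a small point the paper leaves implicit, but the argument is the same.
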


{\bf Acknowledgement:} The authors are grateful to two anonymous
referees for many helpful comments and suggestion to an earlier version of this
paper.

\frenchspacing


\begin{thebibliography}{99}

\bibitem{berman} A. Berman, M. Farber, A lower bound for the second largest Laplacian eigenvalues of
weighted graphs, {\it Electron. J. Linear Algebra} 22 (2011)
1179--1184.

\bibitem{Borovicanin} B. Borovi\'{c}anin, I. Gutman, Nullity of graphs,
 \textit{Applications of Graph Spectra} (D. Cvetkovi\'c, I. Gutman
eds.), Math. Inst., Belgrade, 2009, pp. 107--122.

\bibitem{chengbo} B. Cheng, B.L. Liu, On the nullity of graphs, {\it Electron. J. Linear
Algebra} 16 (2007) 60--67.

\bibitem{cheng1} G.J. Cheng, L.H. Huang, H.G. Yeh,
A characterization of graphs with rank 4, {\it Linear Algebra Appl.}
434(8) (2011) 1793--1798.

\bibitem{cheng2} G.J. Cheng, L.H. Huang, H.G. Yeh,
A characterization of graphs with rank 5, {\it Linear Algebra Appl.}
436(11) (2012) 4241--4250.

\bibitem{collatz} L. Collatz, U. Sinogowitz, Spektren endlicher grafen, {\it Abh. Math. Sem. Univ. Hamburg} 21 (1957) 63--77.

\bibitem{cvetkovic}D. Cvetkovi\'{c}, P. Rowlinson, Spectra of
unicyclic graphs, {\it Graphs Combin.}  3 (1978) 7--23.

\bibitem{das2} K.C. Das, Extremal graph characterization from the
bounds of the spectral radius of weighted graphs, {\it Appl. Math.
Comput.} 217 (2011) 7420--7426.

\bibitem{das1} K.C. Das, R.B. Bapat, A sharp upper bound on the
spectral radius of weighted graphs, {\it Discrete Math.} 308 (2008)
3180--3186.



\bibitem{sean}S. Daugherty, The inertia of unicyclic graphs and
the implications for closed-shells, {\it Linear Algebra Appl.} 429
(2008) 849--858.

\bibitem{fan} Y.Z. Fan, W. Du, C. Dong, The nullity of bicyclic signed
graphs, {\it arXiv: 1207.6765v1.}

\bibitem{fan1} Y.Z. Fan, Y. Wang, Y. Wang, A note on the nullity of unicyclic
signed graphs, {\it Linear Algebra Appl.} 438 (2013) 1193--1200.

\bibitem{fiorini}S. Fiorini, I. Gutman, I. Sciriha, Trees with maximum nullity, {\it Linear Algebra Appl.} 397 (2005) 245--251.

\bibitem{fiorini1}S. Fiorini, R.J. Wilson, \textit{Edge-Colourings of
Graphs}, Pitman, London, 1977, pp. 25--26.

\bibitem{fowler}P.W. Fowler, D.E. Manolopoulos, \textit{An Atlas of
Fullerenes}, {Clarendon Press}, Oxford, 1995.

\bibitem{fan2}S.C. Gong, Y.Z. Fan, Z. Yin, On the nullity of graphs with pendant trees, {\it Linear Algebra Appl.} 433 (2010) 1374--1380.

\bibitem{gregory}D.A. Gregory, B. Heyink, K.N. Vander Meulen, Inertia and biclique decompositions of joins of graphs,
{\it J.  Combin. Theory Ser. B} 88 (2003) 135--151.

\bibitem{gregory1}D.A. Gregory, V.L. Watts, B.L. Shader, Biclique
decompositions and Hermitian rank, {\it Linear Algebra Appl.} 292
(1999) 267--280.

\bibitem{gregory}D.A. Gregory, B. Heyink, K.N. Vander Meulen, Inertia and biclique decompositions of joins of graphs,
{\it J. of Combin. Theory, Ser. B} 88 (2003) 135--151.

\bibitem{guo}J. Guo, W. Yan, Y. Yeh, On the nullity and the
matching number of unicyclic graphs, {\it Linear Algebra Appl.} 431
(2009) 1293--1301.

\bibitem{gutman}I. Gutman, N. Trinajsti\'{c}, Graph theory and
molecular orbitals, {\it Topics in Current Chemistry} 42 (1973)
49--93.


\bibitem{horn}R.A. Horn, C.R. Johnson, {\it Matrix Analysis}, 2nd
Edition, Cambridge University Press, 2012.

\bibitem{longuet}H.C. Longuet-Higgins, Some studies in molecular orbital theory I. Resonance structures
and molecular orbitals in unsaturated hydrocarbons, {\it J. Chem.
Phys.} 18 (1950) 265--274.

\bibitem{ma haicheng} H.C. Ma, W.H. Yang, S.G. Li, Positive and negative
inertia index of a graph, {\it Linear Algebra Appl.} 438 (2013) 331--341.

\bibitem{sciriha}I.~Sciriha, On the rank of graphs, in: Y. Alavi, D.R. Lick, A. Schwenk, Combinatorics, Graph Theory, and Algorithms, vol. II,
New Issue Press,Western Michigan University, Kalamazoo, Michigan, 1999, pp. 769--778.

\bibitem{sorgun1} S. Sorgun, \c{S}. B\"{u}y\"{u}kk\"{o}se, On the
bounds for the largest Laplacian eigenvalues of weighted graphs,
{\it Discrete Optim.} 9 (2012) 122--129.

\bibitem{sorgun2} S. Sorgun, \c{S}. B\"{u}y\"{u}kk\"{o}se, The new
upper bounds on the spectral radius of weighted graphs, {\it Appl.
Math. Comput.} 218 (2012) 5231--5238.

\bibitem{tan} X. Tan, B. Liu, On the nullity of unicyclic graphs, {\it Linear Algebra Appl.}
408 (2005) 212--220.

\bibitem{yu feng wang} G. Yu, L. Feng, Q. Wang, Bicyclic
graphs with small positive index of inertia, {\it Linear Algebra
Appl.} 438 (2013) 2036--2045.



\end{thebibliography}
\end{document}